\def\fix#1{{{#1}}}
\def\fixtwo#1{{{#1}}}
\theoremstyle{plain}
\newtheorem{proposition}{Proposition}
\newtheorem{theorem}[proposition]{Theorem}
\newtheorem*{conjecture*}{Conjecture}
\newtheorem{definition}[proposition]{Definition}
\newtheorem{corollary}[proposition]{Corollary}
\newtheorem{lemma}[proposition]{Lemma}
\newtheorem{remark}[proposition]{Remark}
\newtheorem{example}[proposition]{Example}
\newtheorem{proposition-definition}[proposition]{Proposition/Definition}
\newtheorem*{proposition*}{Proposition}
\newtheorem*{theorem*}{Theorem}
\newtheorem*{openproblem*}{Open Problem}
\newtheorem*{maintheorem*}{Main Theorem}
\newtheorem*{maincorollary*}{Main Corollary}
\newtheorem*{corollary*}{Corollary}
\newtheorem*{lemma*}{Lemma}
\newtheorem*{remark*}{Remark}
\newtheorem*{definition*}{Definition}
\newtheorem*{example*}{Example}
\newtheorem*{examples*}{Examples}
\newtheorem*{criterion*}{Generation Criterion}
\newtheorem*{explanation*}{Explanation}
\numberwithin{proposition}{section}
\numberwithin{equation}{section}
\def\co{\colon\thinspace}
\newcommand{\N}{\mathbb{N}}
\newcommand{\Z}{\mathbb{Z}}
\newcommand{\Q}{\mathbb{Q}}
\newcommand{\R}{\mathbb{R}}
\newcommand{\C}{\mathbb{C}}
\newcommand{\K}{\mathbb{K}}
\newcommand{\F}{\mathbb{F}}
\newcommand{\D}{\mathbb{D}}
\renewcommand{\P}{\mathbb{P}}
\newlist{CZ}{enumerate}{1}
\setlist[CZ]{label=(CZ\arabic*)}
\begin{document}

\title[McKay correspondence]
{The McKay correspondence for isolated singularities via Floer theory}

\author{Mark McLean}
\thanks{The research of Mark McLean was partially supported by the NSF grant DMS-1508207.}
\address{M. McLean, Mathematics Department, Stony Brook University, NY, U.S.A.}
\email{markmclean@math.stonybrook.edu}
\author{Alexander F. Ritter}
\address{A. F. Ritter, Mathematical Institute, University of Oxford, England.}
\email{ritter@maths.ox.ac.uk}

\date{version: \today}

\begin{abstract}%
We prove the generalised McKay correspondence for isolated singularities using Floer theory. Given an isolated singularity $\C^n/G$ for a finite subgroup $G\subset SL(n,\C)$ and any crepant resolution $Y$, we prove that the rank of positive symplectic cohomology $SH^*_+(Y)$ is the number $|\mathrm{Conj}(G)|$ of conjugacy classes of $G$, and that twice the age grading on conjugacy classes is the $\Z$-grading on $SH^{*-1}_+(Y)$ by the Conley-Zehnder index. The generalized McKay correspondence follows as $SH^{*-1}_+(Y)$ is naturally isomorphic to ordinary cohomology $H^*(Y)$, due to a vanishing result for full symplectic cohomogy. In the Appendix we 
construct a novel filtration on the symplectic chain complex for any non-exact convex symplectic manifold, which yields both a Morse-Bott spectral sequence and a construction of positive symplectic cohomology.
\end{abstract} 
\maketitle
%
\section{Introduction}
%
\subsection{The classical McKay correspondence}
\label{Section The classical McKay correspondence}
The classical McKay correspondence is a description of the representation theory of finite subgroups $G\subset SL(2,\C)$ in terms of the geometry of the minimal resolution $\pi: Y \to \C^2/G$. Recall a resolution consists of a non-singular quasi-projective variety $Y$ \fix{together with a proper, birational morphism}
$\pi$ which is a biholomorphism away from the singular locus.
In the case of $\C^2/G$, there is only an isolated singularity at the origin.
Minimality means other resolutions factor through it, and in this case it is equivalent to the absence of rational holomorphic $(-1)$-curves in $Y$. The exceptional locus $E=\pi^{-1}(0)\subset Y$ is a tree of transversely intersecting exceptional divisors $E_j$, where each $E_j$ is a rational holomorphic $(-2)$-curve. Finite subgroups $G\subset SL(2,\C)$, up to conjugation, are in 1-to-1 correspondence with ADE Dynkin diagrams. The diagram for $G$ can be recovered by assigning a vertex to each $E_j$, and an edge between vertices whenever the corresponding divisors intersect. For example, the real picture for $D_4$ is\footnote{$X=\C^2/\widetilde{\mathbb{D}_4}$. 
The binary dihedral group $\widetilde{\mathbb{D}_4}$ is the quaternion group; it has size $8$ and double covers via $SU(2)\to SO(3)$ a size $4$ dihedral group $C_2\times C_2\cong \mathbb{D}_4\subset SO(3)$. Circles depicting $E$ represent copies of $\C \P^1$. The quaternion group has four non-trivial conjugacy classes: $-1$, $\pm i$, $\pm j$, $\pm k$.}
\\[2mm]
\strut\hspace{16mm}\begin{picture}(0,0)%
\includegraphics{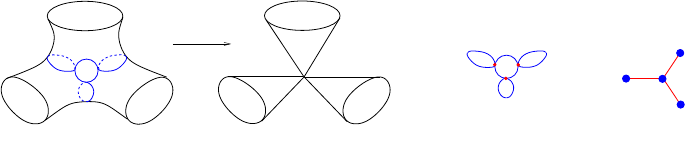}%
\end{picture}%
\setlength{\unitlength}{1657sp}%
\begingroup\makeatletter\ifx\SetFigFont\undefined%
\gdef\SetFigFont#1#2#3#4#5{%
  \reset@font\fontsize{#1}{#2pt}%
  \fontfamily{#3}\fontseries{#4}\fontshape{#5}%
  \selectfont}%
\fi\endgroup%
\begin{picture}(13048,2771)(-2414,-5422)
\put(-944,-3031){\makebox(0,0)[lb]{\smash{{\SetFigFont{7}{8.4}{\rmdefault}{\mddefault}{\updefault}{\color[rgb]{0,0,0}$Y$}%
}}}}
\put(1306,-3301){\makebox(0,0)[lb]{\smash{{\SetFigFont{7}{8.4}{\rmdefault}{\mddefault}{\updefault}{\color[rgb]{0,0,0}$\pi$}%
}}}}
\put(3241,-3031){\makebox(0,0)[lb]{\smash{{\SetFigFont{7}{8.4}{\rmdefault}{\mddefault}{\updefault}{\color[rgb]{0,0,0}$X$}%
}}}}
\put(1801,-5326){\makebox(0,0)[lb]{\smash{{\SetFigFont{7}{8.4}{\rmdefault}{\mddefault}{\updefault}{\color[rgb]{0,0,0}$\{x^2+zy^2+z^3=0\}\subset \C^3$}%
}}}}
\put(6391,-2941){\makebox(0,0)[lb]{\smash{{\SetFigFont{7}{8.4}{\rmdefault}{\mddefault}{\updefault}{\color[rgb]{0,0,0}$\textrm{Exceptional}$}%
}}}}
\put(6661,-3301){\makebox(0,0)[lb]{\smash{{\SetFigFont{7}{8.4}{\rmdefault}{\mddefault}{\updefault}{\color[rgb]{0,0,0}$\textrm{divisors}$}%
}}}}
\put(9451,-3301){\makebox(0,0)[lb]{\smash{{\SetFigFont{7}{8.4}{\rmdefault}{\mddefault}{\updefault}{\color[rgb]{0,0,0}$\textrm{Diagram}$}%
}}}}
\put(9496,-2941){\makebox(0,0)[lb]{\smash{{\SetFigFont{7}{8.4}{\rmdefault}{\mddefault}{\updefault}{\color[rgb]{0,0,0}$\textrm{Dynkin}$}%
}}}}
\put(9766,-5011){\makebox(0,0)[lb]{\smash{{\SetFigFont{7}{8.4}{\rmdefault}{\mddefault}{\updefault}{\color[rgb]{0,0,0}$D_4$}%
}}}}
\put(6526,-5011){\makebox(0,0)[lb]{\smash{{\SetFigFont{7}{8.4}{\rmdefault}{\mddefault}{\updefault}{\color[rgb]{0,0,0}$E=\pi^{-1}(0)$}%
}}}}
\end{picture}%

\\[2mm]
The correspondence \cite{McKay} states that the non-trivial irreducible representations $V_i$ of $G$ can be labelled by the vertices of the Dynkin diagram so that the adjacency matrix $A_{ij}$ of the diagram determines the tensor products $\C^2\otimes V_i\cong \oplus\, A_{ij}\, V_j$ with the canonical representation. 
The cohomology $H^*(Y,\C)$ consists of $H^0(Y)=\C\cdot 1$, $H^2(Y)=\oplus\, \C\cdot \mathrm{PD}[E_j]$. So the dimension of $H^*(Y)$, or the Euler characteristic $\chi(Y)$, is the number of irreducible representations. As $G$ is finite, this is the number of conjugacy classes, namely the dimension of the representation ring $\mathrm{Rep}(G)$ (although there is no natural bijection between $\mathrm{Irreps}(G)$ and $\mathrm{Conj}(G)$).

\begin{example} The simplest case $G=\Z/2=\{\pm I\}\subset SL(2,\C)$ yields $Y=T^*\C\P^1=\mathcal{O}_{\C\P^1}(-2)$ arising as the blow-up at $0$ of the Veronese variety\footnote{The image of $\nu_2:\C^2/\{\pm 1\}\hookrightarrow \C^3$, $(x,y)\mapsto (x^2,xy,y^2)$.} $\mathbb{V}(XZ-Y^2)\subset \C^3$. The K\"{a}hler form on $T^*\C\P^1$ makes $\C\P^1$ holomorphic and symplectic, unlike the canonical exact symplectic form on $T^*S^2$. Here $H^*(Y)$ has two generators $1,\omega$, and $G$ has two conjugacy classes $I,-I\in \mathrm{Conj}(G)$.
\end{example}

\begin{remark}
This fails for $G\subset GL(2,\C)$, for example for $G\cong \Z/2$ generated by the reflection $(z,w)\mapsto (z,-w)$, then $\C^2/G\cong \C^2$ is already non-singular but does not remember $G$. 
\end{remark}

\subsection{The generalised McKay correspondence}
\label{Subsection The generalised McKay correspondence}
More generally, for any $n\geq 2$, one considers resolutions of quotient singularities for finite subgroups $G\subset SL(n,\C)$,
\begin{equation}\label{Equation pi from Y to X is Cn mod G}
\pi: Y \to X = \C^n/G,
\end{equation}
viewing $\C^n/G=\mathrm{Spec}\, \C[z_1,\ldots,z_n]^G$ as an affine variety.
There is no longer a preferential resolution, so one requires $\pi$ to be \emph{crepant}, meaning the canonical bundles satisfy $K_Y=\pi^*K_X$ (which is therefore trivial). \fix{Resolutions of such $X$ always exist} by Hironaka \cite{Hironaka}, but crepant resolutions may not exist;\footnote{If $Y \to \C^4/\pm 1$ were crepant then by Theorem \ref{Theorem generalised McKay correspondence}, $H^*(Y)$ would have two generators, in degrees $0,4$ (twice the age grading of $\pm 1$), contradicting that $E=\pi^{-1}(0)$ is a projective variety
 with $H^*(E)\cong H^*(Y)$.} when they exist they need not be unique even though they always \fix{admit the same collection of} divisors \cite[Thm.1.4]{Ito-Reid}.
\fix{For $n\leq 3$,} they exist \cite[Thm.1.2]{BridgelandKingReid}. \fix{For $n=3$,} they are related by flops.

The conjecture $\chi(Y)=|\mathrm{Conj}(G)|$ dates back to work of Dixon-Harvey-Vafa-Witten \cite{DHVW}, Atiyah-Segal \cite{Atiyah-Segal} and Hirzebruch-H\"{o}fer \cite{Hirzebruch-Hoefer}. In the early 1990s, the conjecture was refined by Miles Reid \cite{Reid2,Reid} by taking into account the grading of $H^*(Y)$.
Namely, consider the dual action\footnote{%
In the notation of Ito-Reid \cite{Ito-Reid}, we are taking the age grading of $\varphi_g\in \mathrm{Hom}(\mu_r,G)$, $\varphi_g(e^{2\pi i/r})=g^{-1}$, where $\mu_r\subset \C^*$ is the group of $r$-th roots of unity. The inverse reflects the fact that we do not dualise $H^{2k}(Y,\C)$ (compare \cite[Theorem 1.6]{Ito-Reid}). This choice agrees with Kaledin \cite{Kaledin}, where 
a representation $g: \mu_r \to GL(\C^n)$ labels eigensummands $V_j\subset \C^n$ so that the action is $\lambda \cdot x = \lambda^{-b_j}x$ for  $b_j\in [0,r)\cap \Z$; the dual action on the coordinate ring $\mathcal{O}(\C^n)=\C[(\C^n)^*]$ gets rid of that inversion and $\mathrm{age}(g)=\frac{1}{r}\sum b_j\dim V_j$ (so our $a_j=2\pi b_j/r$).
%
}
of $g\in SL(n,\C)$ on $\C^n$, $g\cdot x=g^{-1}(x)$, and let $\lambda_1,\ldots,\lambda_n\in U(1)$
%
%
 denote the unordered eigenvalues (which are $|G|$-th roots of unity). Writing $\lambda_j = e^{i a_j}$ for $a_j\in [0,2\pi)$, define the \emph{age grading} on $\mathrm{Conj}(G)$ by\footnote{This is an integer, as $\sum a_j$ is divisible by $2\pi$ as $\prod \lambda_j = \det g^{-1} = 1$, and it only depends on $[g]\in \mathrm{Conj}(G)$.}
\begin{equation}\label{Equation age}
\mathrm{age}(g) = \tfrac{1}{2\pi}\sum a_j \in [0,n).
\end{equation}
The generalised McKay correspondence, as reformulated by \fix{Reid \cite{Reid2}}, is the following. 

\begin{theorem}\label{Theorem generalised McKay correspondence}
$\dim H^{2k}(Y,\C)=|\mathrm{Conj}_k(G)|$ 
where $\mathrm{Conj}_k(G)$ denotes the conjugacy classes of \emph{age} $k$, and the odd cohomology of $Y$ vanishes.
\end{theorem}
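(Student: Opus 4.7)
The plan is to deduce the correspondence from the computation of positive symplectic cohomology promised by the abstract: since the abstract asserts $SH^{*-1}_+(Y)\cong H^*(Y)$ (via vanishing of full symplectic cohomology), it suffices to identify $SH^*_+(Y)$ as the free $\C$-module on $\mathrm{Conj}(G)$ graded by twice the age. First I would equip the crepant resolution $Y$ with a convex symplectic structure at infinity modeled on the symplectization of the link $(S^{2n-1}/G,\xi_{\mathrm{std}})$ of the singularity. Because $Y$ is only quasi-projective and the K\"ahler form is generally non-exact on the exceptional divisor $E$, I would invoke the framework of positive symplectic cohomology for non-exact convex symplectic manifolds developed in the Appendix to define $SH^*_+(Y)$ and the tautological action-filtration long exact sequence
\begin{equation*}
\cdots \to H^*(Y) \to SH^*(Y) \to SH^*_+(Y) \to H^{*+1}(Y) \to \cdots,
\end{equation*}
in which the low-action subcomplex (constant orbits of a $C^2$-small Hamiltonian) computes $H^*(Y)$ and the quotient computes $SH^*_+(Y)$.

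Second I would establish the vanishing $SH^*(Y)=0$. The natural route is to use the crepant map $\pi\co Y\to \C^n/G$ and a Viterbo-type functoriality or deformation argument: admissible Hamiltonians pulled back from $\C^n/G$ (whose orbifold symplectic cohomology vanishes, as $SH^*(\C^n)=0$) have one-periodic orbits confined to a neighborhood of $E$, and controlling the continuation map forces the unit of $SH^*(Y)$ to die. Combined with the displayed exact sequence this yields the shift isomorphism $SH^{*-1}_+(Y)\cong H^*(Y)$ and reduces the theorem to a computation of $SH^*_+$.

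Third I would compute $SH^*_+(Y)$ directly through the Reeb orbits on $S^{2n-1}/G$. The Reeb flow on $S^{2n-1}/G$ is covered by the Hopf flow on $S^{2n-1}$, so closed Reeb orbits are classified by conjugacy classes $[g]\in \mathrm{Conj}(G)$: a Hopf trajectory from $x$ to $gx$ descends to a closed orbit in the quotient, and different choices of $x$ in the $g$-fixed locus of $S^{2n-1}$ give rise to a Morse-Bott family. A perfect Morse function on each family contributes, via the Morse-Bott spectral sequence of the Appendix, generators in a range of degrees; the vanishing $SH^*(Y)=0$ forces all but the minimum of each family to cancel under the boundary map, leaving one generator per conjugacy class. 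To determine the grading, I would linearize the Hopf flow on the eigenlines of the dual $g$-action on $\C^n$: with eigenvalues $e^{ia_j}$, $a_j\in [0,2\pi)$, a direct calculation gives Conley-Zehnder index $\sum a_j/\pi = 2\,\mathrm{age}(g)$. The shift by one between $SH^{*-1}_+$ and $H^*(Y)$ then places one generator in degree $2\,\mathrm{age}(g)$, and the vanishing of odd cohomology follows because all indices are even.

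The main obstacle is the vanishing $SH^*(Y)=0$ in the non-exact setting: standard exact Liouville arguments do not apply on a Calabi-Yau resolution where $\omega$ pairs nontrivially with $[E_j]$, so one must carefully rule out holomorphic sphere bubbling off the exceptional divisor while keeping continuation maps well-defined. Secondarily, the Morse-Bott identification of Reeb orbits with conjugacy classes is conceptually clean but requires the full Morse-Bott spectral sequence from the Appendix to justify the cancellation pattern on higher-degree classes; without it one recovers only $\chi(Y)=|\mathrm{Conj}(G)|$ rather than the graded refinement.
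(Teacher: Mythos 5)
Your overall framing---set up $Y$ as a non-exact convex symplectic manifold, establish the tautological long exact sequence, prove $SH^*(Y)=0$, and reduce to a computation of $SH^*_+(Y)$---matches the paper's strategy up to the vanishing step. The proposed vanishing argument differs in method: you suggest Viterbo-type functoriality/continuation from $\C^n/G$, whereas the paper proves vanishing by a grading argument (for Hamiltonians $L_k=kH$ with generic slope $k$, the only $1$-orbits are constant, and their Conley-Zehnder index is $\geq (k/\pi - 2)n$, so for fixed degree no generator survives in the direct limit). Both routes are plausible; the paper's is self-contained and avoids the need to control pseudoholomorphic sphere bubbling in a continuation map, which you correctly flag as delicate.

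The serious gap is in your third step, the computation of $SH^*_+(Y)$. You assert that ``the vanishing $SH^*(Y)=0$ forces all but the minimum of each family to cancel under the boundary map, leaving one generator per conjugacy class.'' This does not follow, and the paper explicitly notes that the direct Morse-Bott spectral sequence is unmanageable: the $E_1$-page has infinitely many Morse-Bott families $\mathcal{O}_{\mathbf{g},\ell}$ per conjugacy class (one for each eigenvalue $e^{i\ell}$ and each iterate $\ell + 2\pi k$), each contributing a copy of $H^*(\R\P^{2d-1})$-type cohomology rather than a single generator, and the higher differentials $d_r$ ($r\geq 1$) cannot be determined without already knowing $H^*(Y)$---which is precisely what you are trying to compute, making the argument circular. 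The vanishing $SH^*(Y)=0$ only tells you $SH^{*-1}_+(Y)\cong H^*(Y)$; it places no constraint on the spectral sequence differentials beyond Euler characteristic considerations, and even those are ill-defined because each degree receives contributions from infinitely many families. The paper escapes this via the $S^1$-equivariant theory: there, each $\mathcal{O}_{\mathbf{g},\ell}/S^1\cong \P(V_{g,\ell})/G_V$ has cohomology concentrated in even degrees, so after the parity shift the entire equivariant $E_1$-page lies in odd total degree and the spectral sequence degenerates at $E_1$. One then recovers $SH^*_+(Y)$ from $ESH^*_+(Y)$ via the Gysin sequence, using that $ESH^*_+(Y)$ is a free $\F$-module. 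Your proposal is missing this equivariant detour entirely, and without it the claimed cancellation pattern has no justification. (A smaller issue: the Conley-Zehnder computation is more involved than ``$\sum a_j/\pi=2\,\mathrm{age}(g)$''; the correct statement requires accounting for the dimensions of eigenspaces with smaller eigenvalues and the dimension of the Morse-Bott family, as in the paper's explicit index formula, and the generator of grading $2\,\mathrm{age}(g)-1$ is the top, not the bottom, Morse critical point of the minimal Morse-Bott family.)
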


This was proved for $n=3$ by Ito-Reid \cite{Ito-Reid}; for general $n$ and abelian $G$ it was proved using toric geometry by Batyrev-Dais \cite{BatyrevDais}; in full generality it was proved using motivic integration machinery by Batyrev \cite{Batyrev2} and later by Denef-Loeser \cite{Denef-Loeser}.
We refer to Craw's thesis \cite{Craw2} \fix{and the references therein} for an extensive history of the generalisations of the McKay correspondence, in particular on the extensions to a statement about the K-theory of $Y$ in terms of $\mathrm{Rep}(G)$, and more generally about relating the derived categories of coherent sheaves on $Y$ and of $G$-equivariant sheaves on $\C^n$. \fix{In particular, on the latter generalisations, we highlight the work of Bridgeland-King-Reid \cite{BridgelandKingReid} and Bezrukavnikov-Kaledin \cite{BezrukavnikovKaledin}, and we refer the reader to Craw's expository notes \cite{CrawExpository} and the references therein.}
\fix{Finally, we mention that Reid \cite{Reid2} also strengthened the above correspondence statement with the following conjecture:}

\begin{openproblem*}
There is a natural basis of $H^*(Y)$ labelled by the conjugacy classes of $G$.
\end{openproblem*}

Although the precise meaning of natural is not known, reasonable labellings are known for $n=2$ by the classical correspondence; for $n=3$ by Ito-Reid \cite{Ito-Reid};
%
and by Kaledin \cite{Kaledin} for even $n=2m$ when $G$ preserves the complex symplectic form on $\C^{2m}$.

Our approach to the McKay Correspondence, which we describe below, uses only tools from symplectic topology and thus it differs significantly from the above algebraic geometry literature. Our way of thinking about the McKay Correspondence is fundamentally new, and we expect that it will lead to new insights into the crepant resolution conjecture, which we will address in a subsequent paper \cite{McLean-Ritter}.

\subsection{Isolated singularities}
\label{Subsection Isolated singularities}

In this paper we consider the case when the singularity is isolated.
Our approach via Floer theory works in examples of non-isolated singularities, however generalising the proofs is harder as the moduli space of Hamiltonian orbits in $Y$ lying over the singular locus is difficult to pin down. In a subsequent paper \cite{McLean-Ritter}, we will prove the McKay Correspondence in the non-isolated case in a slightly different way, but based on the foundational work of this paper.

\begin{lemma}\label{Lemma G acts freely}
For $G\subset SL(n,\C)$ any finite subgroup, $\C^n/G$ is an isolated singularity if and only if $G$ acts freely away from $0\in \C^n$ (i.e.\;the eigenvalues of $g\neq 1\in G$ are not equal to $1$).
\end{lemma}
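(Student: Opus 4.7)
The plan is to treat the two directions separately, in both cases reducing to a local model for the quotient near a potentially singular point.

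For the forward direction, suppose every $g\neq 1$ in $G$ has no eigenvalue equal to $1$. Then no $g\neq 1$ fixes a nonzero vector, so $G$ acts freely on $\C^n\setminus\{0\}$. The quotient map $\C^n\setminus\{0\}\to (\C^n\setminus\{0\})/G$ is then a finite étale cover of complex manifolds, so its target is smooth. Therefore the singular locus of $\C^n/G$ is contained in $\{[0]\}$, so the singularity is isolated.

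For the converse, suppose some $g\neq 1$ has $1$ as an eigenvalue, so the fixed subspace $V^g=\ker(g-1)\subset \C^n$ is of positive dimension. The claim I would prove is that every $p\in V^g\setminus\{0\}$ maps to a singular point of $\C^n/G$; since the image of $V^g\setminus\{0\}$ in $\C^n/G$ is positive-dimensional, this shows the singularity is not isolated. Fix such a $p$ with stabilizer $G_p:=\{h\in G: h(p)=p\}\supset\langle g\rangle\neq\{1\}$. By Cartan's (Bochner's) linearization lemma for holomorphic actions of finite groups fixing a point — or, concretely, by averaging an arbitrary chart at $p$ over $G_p$ — there is a $G_p$-equivariant biholomorphism between a neighborhood of $p\in\C^n$ and a neighborhood of $0\in T_p\C^n\cong\C^n$ equipped with the linearized $G_p$-action. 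This identifies a neighborhood of $[p]$ in $\C^n/G$ with a neighborhood of $[0]$ in $\C^n/G_p$.

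Now I would invoke the Chevalley--Shephard--Todd theorem: for a finite subgroup $H\subset GL(n,\C)$, the quotient $\C^n/H$ is smooth at $[0]$ if and only if $H$ is generated by pseudoreflections (elements fixing a hyperplane). Since $G_p\subset G\subset SL(n,\C)$, any pseudoreflection in $G_p$ would have a single non-unit eigenvalue which, by the determinant $=1$ condition, must itself equal $1$, forcing the element to be the identity. Hence $G_p$ contains no nontrivial pseudoreflections; being nontrivial, it is not generated by pseudoreflections, so $\C^n/G_p$ is singular at $[0]$, and therefore $\C^n/G$ is singular at $[p]$. The main obstacle is packaging the appeal to Chevalley--Shephard--Todd cleanly; the equivariant linearization is a routine averaging argument, but the identification of singular locus with the image of the ramification locus genuinely relies on that theorem (or on an equivalent direct computation).
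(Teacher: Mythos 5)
Your proof is correct and takes essentially the same route as the paper: both directions rest on the Chevalley--Shephard--Todd theorem together with the observation that a finite-order element of $SL(n,\C)$ cannot be a quasi-reflection, applied to the stabilizer of a fixed point via a local ($G_p$-equivariant) linearization. The paper is slightly terser about the linearization step (citing Fujiki and Prill for details), but the underlying argument is the same.
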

\begin{proof}
Given any finite subgroup $Q\subset GL(n,\C)$, the Chevalley-Shephard-Todd theorem \cite{Chevalley} states that $\C^n/Q$ is smooth if and only if $Q$ is generated by quasi-reflections\footnote{A quasi-reflection is a non-identity element $A$ for which $A-I$ has rank one (i.e.\;$\mathrm{codim}_{\C}\,\mathrm{Fix}(A)=1$).}, in which case $\C^n/Q\cong \C^n$.
However, a finite order element of $SL(n,\C)$ cannot be a quasi-reflection.
%
So finite $G\subset SL(n,\C)$ are \emph{small}, i.e.\,contain no quasi-reflections.
%
%
The singular set of $\C^n/G$ is
\begin{equation}\label{Equation singular set}
\mathrm{Sing}(\C^n/G) = \{v\in \C^n: g\cdot v = v \textrm{ for some }1\neq g\in G\}/G.
\end{equation}
Indeed, where $G$ acts freely the quotient is easily seen to be smooth. Conversely, at a point $v$ as above, pick a $\mathrm{Stab}_G(v)$-invariant analytic neighbourhood $V\subset \C^n$ of $v$. Then $V/\mathrm{Stab}_G(v)$ is analytically isomorphic to a neighbourhood of $[v]\in \C^n/G$. By the same theorem, this is isomorphic to $\C^n$ if and only if $\mathrm{Stab}_G(v)$ is generated by quasi-reflections. But the latter fails as $G$ is small, so $v$ is indeed singular. We refer to \cite{Fujiki,Prill} for more precise details.
\end{proof}

\begin{remark}
The Kleinian singularities in Sec.\ref{Section The classical McKay correspondence} are always smoothable. This fails in dimension $n\geq 3$ for the above isolated singularities $\C^n/G$ by Schlessinger's rigidity theorem \cite{Schlessinger}.
\end{remark}

\begin{examples*}
Finite groups $G$ admitting a fixed point free faithful complex representation were classified by Wolf \cite[Theorem 7.2.18]{Wolf} $($cf.\,also the final comments in \cite[Example 1.43]{Hatcher}$)$. Those representations which yield subgroups in $SL(n,\C)$ have been classified by Stepanov \cite{Stepanov}. 
%
%
%
For abelian groups, it forces $G$ to be a cyclic group.\footnote{After a conjugation, one may assume all matrices in $G$ are diagonal, then the projection to the $(1,1)$-entry gives an injective group homomorphism into $S^1$, and finite subgroups of $S^1$ are cyclic.} 
%
%
When $n$ is an odd prime, in particular for $n=3$, the only finite subgroups $G\subset SL(n,\C)$ that give rise to an isolated singularity are cyclic groups, by Kurano-Nishi \cite{KuranoNishi}.
A simple example is $X=\C^3/(\Z/3)$, where $\Z/3$ acts diagonally by third roots of unity, which admits the (unique) crepant resolution $\pi: Y\to X$ given by blowing up $0$, with exceptional divisor $E=\pi^{-1}(0)\cong \C\P^2$. 
%
%
Lens spaces \cite[Example 2.43]{Hatcher} yield a family of examples of cyclic actions, namely $G=\Z/m$ acts on $\C^n$ by rotation by $(e^{2\pi i \ell_1/m},\ldots,e^{2\pi i \ell_n/m})$ where $\ell_j\in \Z$ are coprime to $m$ and $\sum \ell_j \equiv 0 \;\mathrm{mod}\;m$.
For higher dimensional non-abelian examples, we refer to the detailed discussion by Stepanov \cite{Stepanov}.
%
%
%
%
%
\end{examples*}

\subsection{An outline of our proof using Floer theory}
\label{Subsection An outline of our proof using Floer theory}
Let \eqref{Equation pi from Y to X is Cn mod G} be a crepant resolution  of an isolated singularity.
By an averaging argument\footnote{By using an element $h\in SL(n,\C)$ we can change the standard basis of $\C^n$ to a basis of eigenvectors for the $G$-invariant inner product $\tfrac{1}{|G|}\sum_{g\in G} \langle g\cdot,g\cdot \rangle_{\C^n}$. Then the $h$-conjugate of $G$ lies in $SU(n)$.} 
we may assume $G\subset SU(n)$. 
As $Y$ is quasi-projective, it inherits a K\"{a}hler form $\omega$ from an embedding into a projective space. One can modify the K\"{a}hler form so that away from a small neighbourhood of $$E=\pi^{-1}(0)\subset Y$$ it agrees via $\pi$ with the standard K\"{a}hler form on $\C^n/G$ (Lemma \ref{lemma convex symplectic manifold on resolution}). The Floer theory of $(Y,\omega)$ comes into play, as the diagonal $\C^*$-action on $\C^n/G$ lifts to $Y$ (this relies on $Y$ being crepant \cite[Prop.8.2]{Batyrev2}, we will give a self-contained proof in \fix{Proposition \ref{proposition lift of action}}). The underlying $S^1$-action is Hamiltonian, corresponding to the standard Hamiltonian $\frac{1}{2}|z|^2$ on $\C^n$ away from a neighbourhood of $E$. We use this Hamiltonian, rescaled by large constants, to define Floer cohomology groups of $Y$ and their direct limit, symplectic cohomology $SH^*(Y)$.

Loops in $Y\setminus E$ are naturally labelled by $\mathrm{Conj}(G)$ via their free homotopy class,\footnote{If the Floer solutions $\R\times S^1 \to Y$ counted by the Floer differential did not intersect $E$, then the Floer differential would preserve these conjugacy classes. But this assumption is most likely false.}
\begin{equation}
\label{Equation pi1 of Y minus E}
[S^1,Y\setminus E]=\pi_0(\mathcal{L}(Y\setminus E))\cong \mathrm{Conj}(G).
\end{equation}
This follows from an analogous statement for based loops: $\pi_1(Y\setminus E) \cong \pi_1((\C^n\setminus 0)/G)\cong \pi_1(S^{2n-1}/G)\cong G$.
An analogous isomorphism holds also in the non-isolated case.\footnote{
Namely, $\pi_1(Y\setminus E) \cong G$ where $E=\pi^{-1}(\mathrm{Sing}(\C^n/G))$. Indeed, let $F_2$ denote the union of all $\mathrm{codim}_{\C}\geq 2$ fixed point loci in $\C^n$ of all subgroups of $G$. Then any non-identity element in $G$ fixing a point in $\C^n\setminus F_2$ must be a quasi-reflection, but there are no quasi-reflections in a finite subgroup $G\subset SL(n,\C)$. So $G$ acts freely on $\C^n\setminus F_2$. Thus $\pi_1((\C^n\setminus F_2)/G)\cong G$ (note that $\C^n\setminus F_2$ is simply connected due to the codimension of $F_2$). Finally $Y\setminus E \cong (\C^n\setminus F_2)/G$ via $\pi$, using \eqref{Equation singular set}. On the other hand, $\pi_1(Y)=1$ is a general feature of resolutions of quotient singularities \cite[Theorem 7.8]{Kollar}.}
\begin{lemma}\label{Lemma primitive orbits of the S1 action}
Any eigenvector $v\in \C^n\setminus 0$ of $g\in G$ yields a closed orbit $x_g$ of the $S^1$-action, corresponding to ${\bf g}=[g]\in \mathrm{Conj}(G)$ via \eqref{Equation pi1 of Y minus E}. Namely, if  $g(v) = e^{i\ell}v$ for $0< \ell\leq  2\pi$,
\begin{equation}\label{Equation xg orbit corresponding to v}
x_g(t)=e^{i\ell t}v: S^1 \to (\C^n\setminus 0)/G\cong Y\setminus E, \textrm{ where } x_g(0)=[v]=[gv]=[e^{i\ell}v]=x_g(1).
\end{equation}
 Conversely, ${\bf g}\in \mathrm{Conj}(G)$ can be uniquely recovered from $[v]$ \fix{and the eigenvalue $e^{i\ell}$.}
\end{lemma}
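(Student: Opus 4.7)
The plan is to separate the claim into three steps: verify that the formula defines a closed orbit of the $S^1$-action in $Y\setminus E$, identify its free homotopy class with $[g]$ via covering-space theory, and then run this identification in reverse for the converse.

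For the first step, I would note that the $S^1$-action on $\C^n/G$, which lifts to $Y$ by Lemma \ref{lemma lift of action}, is induced by the diagonal rotation $z\mapsto e^{i\theta}z$ on $\C^n$. The path $\tilde{x}_g(t):=e^{i\ell t}v$ in $\C^n\setminus\{0\}$ traces part of this rotation, starts at $v$, and ends at $e^{i\ell}v=g(v)$. Since $[g(v)]=[v]$ in $\C^n/G$, the projection descends to a well-defined closed loop $x_g$ in $(\C^n\setminus 0)/G\cong Y\setminus E$, which lies in a single $S^1$-orbit and avoids $E$ because $v\neq 0$.

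For the second step, by Lemma \ref{Lemma G acts freely} the quotient $\C^n\setminus 0 \to (\C^n\setminus 0)/G$ is a covering with deck group $G$; since $\C^n\setminus 0$ is simply connected for $n\geq 2$, it is the universal cover, and the usual recipe produces an isomorphism $\pi_1(Y\setminus E,[v])\cong G$ that sends a based loop to the deck transformation carrying $v$ to the endpoint of its lift. Applied to $x_g$ with canonical lift $\tilde{x}_g$, this sends the based class of $x_g$ to $g$; passing from based to free loops replaces $G$ by $\mathrm{Conj}(G)$, so the free homotopy class of $x_g$ is $[g]\in\mathrm{Conj}(G)$ as required by \eqref{Equation pi1 of Y minus E}. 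For the converse I would simply reverse this argument: lifting $x_g$ to $\C^n\setminus 0$ produces a path whose two endpoints differ by a unique element of $G$ (by freeness from Lemma \ref{Lemma G acts freely}), and changing either the lift or the basepoint conjugates this element, so $[g]\in\mathrm{Conj}(G)$ is intrinsically determined by the loop.

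The only anticipated subtlety is bookkeeping with conventions: the paper uses the dual action $g\cdot x = g^{-1}(x)$, which could swap $g$ with $g^{-1}$ in the covering identification. This is purely notational, since the isomorphism \eqref{Equation pi1 of Y minus E} is set up with matching conventions, and in any case it does not affect the statement at the level of conjugacy classes. Beyond this sign-tracking I do not expect any substantive obstacle.
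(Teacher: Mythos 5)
Your proof is correct and follows essentially the same underlying idea as the paper's: the freeness of the $G$-action away from the origin (Lemma \ref{Lemma G acts freely}) makes $\C^n\setminus 0 \to (\C^n\setminus 0)/G$ a universal cover for $n\geq 2$, which sets up the bijection $\pi_1(Y\setminus E)\cong G$ and hence \eqref{Equation pi1 of Y minus E}. The paper's printed proof is terser and only addresses the converse: it shows directly that $g_1(v)=g_2(v)$ with $g_1\neq g_2$ would force $g_2^{-1}g_1\in\mathrm{Stab}(v)$, contradicting the isolated-singularity hypothesis via \eqref{Equation singular set}, and then notes that replacing $v$ by $h(v)$ replaces $g$ by $hgh^{-1}$. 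Your version spells out the covering-theoretic identification more explicitly (lift the loop, compare endpoints, check conjugation under change of lift or basepoint), which is the same mechanism from a slightly more topological vantage point; the trade-off is that the paper's argument is self-contained and purely algebraic, while yours leans on the standard deck-transformation dictionary that the paper has already invoked in the discussion surrounding \eqref{Equation pi1 of Y minus E}. One small wording issue: the converse as stated recovers ${\bf g}$ from $[v]$ \emph{together with} the eigenvalue $e^{i\ell}$ (equivalently from the loop $x_g$, as you phrase it), not from the point $[v]$ alone, since a single $v$ can be an eigenvector of several group elements with different eigenvalues; your phrasing in terms of $x_g$ actually handles this more cleanly than the paper's phrasing "from $[v]$".
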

\begin{proof}
We check that $[v]$ determines ${\bf g}\in \mathrm{Conj}(G)$. If $g_1\neq g_2\in G$ with $g_1(v) = \fix{ e^{i\ell}v} = g_2(v)$, then $g_2^{-1}g_1\in \mathrm{Stab}(v)$ implies $v$ is singular by \eqref{Equation singular set}, yielding the contradiction $v=0$ (the isolated singularity). Conjugation $g\mapsto hgh^{-1}$ corresponds to changing eigenvectors by $v\mapsto h(v)$.
\end{proof}

Recall that the eigenvalue $e^{i\ell}$ above contributes $\frac{1}{2\pi}a = 1-\frac{\ell}{2\pi}$ to the $\mathrm{age}(g)$ in \eqref{Equation age}.
Given ${\bf g}\in \mathrm{Conj}(G)$, call $e^{i\ell}$ a {\bf minimal eigenvalue} of ${\bf g}$ if $0<\ell\leq 2\pi$ achieves the minimal possible value amongst eigenvalues of ${\bf g}$. 
If $v\in S^{2n-1}$ satisfies $g(v)=e^{i\ell}v$, and $e^{i\ell}$ is minimal, then we call $x_g(t)=[e^{i\ell t}v]\in S^{2n-1}/G$ a {\bf minimal Reeb orbit}.

To simplify our outline, let us assume that we are using the quadratic radial Hamiltonian $H=\tfrac{1}{4}R^2$ 
%
%
%
to define Floer cohomology, where $R=|z|^2$ on $\C^n/G$, and that this agrees via $\pi$ with the Hamiltonian used on $Y\setminus E$ (Section \ref{Subsection Vanishing of the symplectic cohomology of crepant resolutions} discusses these details). This implies that the time-$t$ Hamiltonian flow on $Y\setminus E$ equals multiplication by $e^{i R t}$ on each {\bf slice}\footnote{$\mathcal{S}(R)\cong S^{2n-1}/G$ is an $S^1$-equivariant isomorphism, as the region $R>0$ avoids the isolated singularity.}
\begin{equation}
\label{Equation slice S}
\mathcal{S}(R)=\pi^{-1}\{[z]\in \C^n/G: |z|^2= R\}\cong S^{2n-1}/G.
\end{equation}
Under this identification, each $1$-periodic Hamiltonian orbit $y:S^1 \to Y\setminus E$ corresponds uniquely to a {\bf Reeb orbit} $x_{{\bf g}} \subset S^{2n-1}/G$ satisfying \eqref{Equation xg orbit corresponding to v} (where we also allow $\ell\geq 2\pi$), and ${\bf g}\in \mathrm{Conj}(G)$ is the class that $y$ determines via \eqref{Equation pi1 of Y minus E}. Fixing ${\bf g}$ and $\ell\in (0,\infty)$ defines
\begin{equation}
\label{Equation moduli space Oa of Ham orbits}
\mathcal{O}_{{\bf g},\ell}=\{ \textrm{parametrized Hamiltonian }1\textrm{-orbits in }\mathcal{S}(\ell)\textrm{ in the class }{\bf g}\} \subset S^{2n-1}/G,
\end{equation}
where the ``inclusion" is defined by taking the initial point of the corresponding Reeb orbit, so $y\mapsto x(0)$. Although the $\mathcal{O}_{{\bf g},\ell+2\pi k}$ yield the same subset of $S^{2n-1}/G$ via \eqref{Equation slice S} for each $k\in \N$, they consist of $1$-orbits in $Y\setminus E$ arising in different slices. So, loosely, $\N$ copies of $\mathcal{O}_{{\bf g},\ell}\subset S^{2n-1}/G$ for $0< \ell \leq 2\pi$ contribute to the symplectic chain complex $SC^*(Y)$.

\begin{example} Continuing the Example $G=\Z/2$, $Y=T^*\C\P^1$, the chain complex $SC^*(Y)$ is
$$
\xymatrix@C=10pt@R=10pt{ 
\textcolor{black}{\bf (2)} & & \boxed{\textcolor{blue}{\bf +1}} \ar@{-->}^-{}[ll] & \boxed{\textcolor{red}{\bf -1}}
\ar@/^0.4pc/@{-->}^-{}[dlll]
 & \textcolor{blue}{\bf -3} \ar@{->}^-{}[dll] & \textcolor{red}{\bf -5} \ar@{->}^-{}[dll] & \textcolor{blue}{\bf -7} \ar@{->}^-{}[dll] & \textcolor{red}{\bf -9} \ar@{->}^-{}[dll] & \textcolor{blue}{\bf -11} \ar@{->}^-{}[dll]  & \ldots \ar@{->}_-{\ldots}[dll] 
\\
\textcolor{black}{\bf(0)} & & \textcolor{blue}{\bf -2} & \textcolor{red}{\bf -4} & \textcolor{blue}{\bf -6} & \textcolor{red}{\bf -8} & \textcolor{blue}{\bf -10} & \textcolor{red}{\bf -12} & \textcolor{blue}{\bf -14}  & \ldots
%
%
\\ 
}
$$
Those numbers are the Conley-Zehnder indices\footnote{Which is a $\Z$-grading on $SC^*(Y)$ as $Y$ is Calabi-Yau by the triviality of $K_Y$.} of the orbits (Appendix C). 
The ``zero-th column'' is a Morse complex for $Y$ and computes $H^*(\C\P^1)$ (this arises from constant orbits).
The other columns are the local Floer contributions of $\mathcal{O}_{-I,\pi},\mathcal{O}_{+I,2\pi}$, $\mathcal{O}_{-I,3\pi}$, etc.\;Each $\mathcal{O}_{{\bf g},\ell}$ equals $\mathcal{S}(\ell) \cong S^3/G\cong \R\P^3$ as any point in the slice yields an orbit. For \textcolor{red}{\bf even multiples} of $\pi$ the orbits lift to iterates of great circles in $S^3$, for \textcolor{blue}{\bf odd multiples} they lift to non-closed orbits in $S^3$ travelling that odd number of half-great circles. These correspond to $\textcolor{red}{\bf +1}$ and $\textcolor{blue}{\bf -1}$ eigenvectors in $\C^2$, for $\textcolor{red}{\bf g=+I}$ and $\textcolor{blue}{\bf g=-I}$, and (disregarding the zero-th column) they arise in the \textcolor{red}{\bf even columns} and the \textcolor{blue}{\bf odd columns}. Using a Morse-Bott model (Appendix E) each column is a copy of $H^*(\mathcal{O}_{{\bf g},\ell})\cong H^*(\R\P^3,\C)$ with grading suitably shifted, so two generators separated by 3 in grading. The jump by $4=2n$ in grading every two columns is due to a full rotation of $\varphi_t^*K_Y$ along the $S^1$-action $\varphi_t$ compared to the standard trivialisation of $K_{\C^n}$. 
\end{example}

Loosely, the positive complex $SC^*_+(Y)$ is the quotient of $SC^*(Y)$ by the Morse subcomplex of constant $1$-periodic Hamiltonian orbits, which appear in $E=\pi^{-1}(0)\subset Y$. The Morse subcomplex computes $H^*(Y)$ and thus gives rise to the long exact sequence 
\begin{equation}\label{Equation LES for SH and SH+}
\cdots \to H^*(Y) \stackrel{c^*}{\to} SH^*(Y) \to SH^*_+(Y) \to H^{*+1}(Y) \to \cdots
\end{equation}
This construction is known for exact convex symplectic manifolds \cite{Viterbo,Bourgeois-Oancea}, in which case the Floer action functional $A_H:\mathcal{L}Y \to \R$ provides the necessary filtration to make the argument rigorous. As our K\"{a}hler form $\omega$ on $Y$ is non-exact (so $A_H$ becomes multi-valued), we construct a novel filtration in Appendix D in order achieve the same result for any convex symplectic manifold. Then $SC^*_+(Y)$ is generated by the union $\cup \mathcal{O}_{{\bf g},\ell}$ over ${\bf g}\in \mathrm{Conj}(G)$, $\ell\in (0,\infty)$.

\begin{example} Continuing above, $SH^*(Y)=SH^*(\mathcal{O}_{\C\P^1}(-2))=0$ by \cite{Ritter4}, so the chain complex is acyclic and all arrows are isomorphisms.\footnote{Those orbits are also the generators when using the canonical exact symplectic form on $T^*S^2$, and the grading is consistent with the Viterbo isomorphism $SH^*(T^*S^2) \cong H_{2-*}(\mathcal{L}S^2)$ provided differentials vanish.}
In $SC^*_+(Y)$, the zero-th column is quotiented, so the two boxed generators survive to $SH^*_+(Y)$: they are the maxima of the first two Morse-Bott manifolds of $1$-orbits; in $S^3$ they become a half-great circle and a great circle. Working over $\C$\fix{, and using the notation $A[d]$ to mean $A$ with grading shifted down by $d$ for any $\Z$-graded group $A=\oplus A_m$, so $(A[d])_m = A_{m+d}$, we deduce that}
\begin{align*}
SH^{*-1}_+(Y)=\C[\textcolor{blue}{\bf -1}][-1]
\oplus \C[\textcolor{red}{\bf +1}][-1] 
\cong
\textcolor{blue}{\C[-2]}\oplus \textcolor{red}{\C}
\cong
 \textcolor{blue}{H^2(Y,\C)}
\oplus \textcolor{red}{H^0(Y,\C)} \cong H^*(Y,\C).
\end{align*}
\end{example}

\begin{theorem}\label{Theorem SH is zero introduction}
$SH^*(Y)=0$, so there is a canonical isomorphism 
$$SH^{*-1}_+(Y) \to H^{*}(Y).$$
\end{theorem}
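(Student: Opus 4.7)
My plan is to treat the two assertions separately. The isomorphism $SH^{*-1}_+(Y)\to H^*(Y)$ is a formal consequence of the vanishing $SH^*(Y)=0$: substituting $SH^*(Y)=0$ into the long exact sequence \eqref{Equation LES for SH and SH+} immediately collapses it into an isomorphism $SH^{*-1}_+(Y)\xrightarrow{\cong}H^*(Y)$, and naturality/canonicity come for free from the naturality of the connecting map in \eqref{Equation LES for SH and SH+}. So the entire content is the vanishing statement.

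For the vanishing, the only structural feature of $Y$ that is truly distinctive is the Hamiltonian $S^1$-action lifting the diagonal $\C^*$-action on $\C^n/G$ (the lift exists precisely because $Y$ is crepant, as recorded in Lemma~\ref{lemma lift of action}). The plan is to follow the template Ritter developed for non-exact convex symplectic manifolds (e.g.\ negative line bundles, where $SH^*$ vanishes via a nilpotent-vs-invertible dichotomy). Concretely, I would construct a distinguished "Kodaira–Spencer" / Seidel element $\mathcal{S}\in SH^*(Y)$ associated to the $S^1$-action — defined either as a continuation class between Hamiltonians differing by the moment map of the action, or equivalently as the Seidel representation image of the $S^1$-loop. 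The two crucial properties to establish are:
\begin{enumerate}
\item multiplication by $\mathcal{S}$ is invertible on $SH^*(Y)$, proved by a continuation argument in which deforming the Hamiltonian by a multiple of the $S^1$-moment map yields a chain map that shifts slope but induces the identity on $SH^*$ after passing to the direct limit;
\item $\mathcal{S}$ lies in the image of the canonical ring map $c^*\co H^*(Y)\to SH^*(Y)$, with preimage a class of positive degree in the finite-dimensional graded ring $H^*(Y)$ — hence nilpotent.
\end{enumerate}
Since $c^*$ is a unital ring homomorphism, $\mathcal{S}$ is both nilpotent and invertible in $SH^*(Y)$, which forces $1=0$, i.e.\ $SH^*(Y)=0$.

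The main obstacle is (i) in this non-exact setting. When $[\omega]\neq 0$ the action functional is multi-valued, so the usual action-filtration proof of continuation invariance breaks down; this is precisely what the novel filtration of Appendix D is designed to repair, and I would use it to control the energy of continuation solutions and guarantee that the continuation chain maps descend to the direct limit. A secondary difficulty is that Floer cylinders defining $\mathcal{S}$ may enter the exceptional locus $E=\pi^{-1}(0)$, so compactness of the relevant moduli spaces requires the convexity/maximum-principle setup of Lemma~\ref{lemma convex symplectic manifold on resolution} together with a monotonicity/area estimate tailored to the Calabi–Yau class $K_Y=\pi^*K_X$. Once (i) is in place, identifying the preimage of $\mathcal{S}$ in $H^*(Y)$ with a positive-degree class — and hence obtaining (ii) — should reduce to a standard degeneration/neck-stretching argument using the $S^1$-action, since $H^*(Y)$ is concentrated in degrees $[0,2n)$ and any positive-degree class is automatically nilpotent of order $\leq n$.
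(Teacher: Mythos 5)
Your proposal is correct, but it takes a genuinely different route from the paper's own proof, and in fact the route you describe is essentially the one the paper \emph{mentions but does not use}: the remark at the beginning of Section 2.5 records that $SH^*(M)=0$ is a general consequence of the Ritter mechanism for convex symplectic manifolds with $c_1=0$ and a Hamiltonian $S^1$-action of non-zero Maslov index $2I(\phi)$, and sketches exactly the ``$c^*:QH^*\to SH^*$ is a surjective algebra map onto a $\Z$-graded ring carrying a nonzero-degree automorphism, but $QH^*$ is finite-dimensional'' argument. Your nilpotent-vs-invertible phrasing is the version from \cite{Ritter4}; the paper instead gives a direct, elementary proof (Theorem \ref{Theorem SH=0}) that avoids the Seidel element entirely. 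It trivialises $K_Y$ by a section agreeing with $dz_1\wedge\cdots\wedge dz_n$ near infinity, uses $H^1(Y)=0$ to pin down the weight of the $\C^*$-action on $K_Y$, linearises the $\C^*$-action at a fixed point $p\in\pi^{-1}(0)$ with weights $m_j$ summing to $-n$, and then estimates the Conley-Zehnder index of each constant $1$-orbit of $L_k=kH$ to get $\mu(x)\leq(3-\tfrac{k}{\pi})n\to-\infty$ as $k\to\infty$; since $SH^*(Y)=\varinjlim HF^*(L_k)$ over grading-preserving maps, every fixed degree eventually sees no generators. What each approach buys: yours is more conceptual and immediately generalises to the whole Ritter class of manifolds; the paper's is self-contained (no reliance on the TQFT/Seidel machinery of \cite{Ritter3,Ritter4}) and, as a byproduct, computes the key winding number $I(\phi)=n$ which is reused in Theorem \ref{Theorem CZ indices for Bott mfds} and Corollary \ref{Corollary computation of ESH+}.

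Two small corrections to your outline. First, the Appendix D filtration is not what you need for step (i): that filtration exists to define $SH^*_+$ and the Morse-Bott spectral sequence; the vanishing $SH^*(Y)=0$ and the invertibility of the rotation automorphism use only the standard maximum-principle setup of \cite{Ritter2,Ritter3,Ritter4} (see \ref{Subsection convex symplectic manifolds}). Second, identifying the $c^*$-preimage of $\mathcal{S}$ as a positive-degree class requires no degeneration or neck-stretching argument: in the Calabi-Yau, $\Z$-graded setting the Seidel element lives in $QH^{2I(\phi)}(Y)=QH^{2n}(Y)$ by construction, and since $QH^*(Y)\cong H^*(Y,\K)$ with the Novikov variable $T$ in degree zero is concentrated in degrees $[0,2n]$, any element of degree $2n>0$ squares to zero; nilpotence is automatic.
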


That vanishing follows by mimicking the argument in \cite{Ritter2} (analogously to $\C^n$ \cite[Sec.3]{OanceaEnsaios}): the Hamiltonians $L_k=k R$ 
%
yield the flow $e^{ikt}$, and for generic \fix{$k\in \R_{>0}$} the only period $1$ orbits are constant orbits in $E$ (Lemma \ref{Lemma generic slope gives constant orbits}) whose Conley-Zehnder index becomes unbounded as $k\to \infty$ (Theorem \ref{Theorem SH=0}). As $SH^*(Y)$ is the direct limit of $HF^*(L_k)$ under grading-preserving maps, in any given finite degree no generators appear for large $k$.

\begin{theorem}\label{Theorem rank of SH+}
$SH^*_+(Y)$ has rank $|\mathrm{Conj}(G)|$. More precisely,
$$\mathrm{rank}\, SH^{2k-1}_+(Y) = |\mathrm{Conj}_k(G)|.$$
%
\end{theorem}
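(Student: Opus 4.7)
The plan is to compute $SH^*_+(Y)$ directly, by applying the Morse--Bott spectral sequence of Appendix~D to the quadratic radial Hamiltonian $H=\tfrac{1}{4}R^2$ (extended across $E$). By Lemma~\ref{Lemma primitive orbits of the S1 action} and (\ref{Equation moduli space Oa of Ham orbits}), the $1$-periodic orbits of $H$ in $Y\setminus E$ organise into Morse--Bott families $\mathcal{O}_{{\bf g},\ell}$ indexed by pairs $({\bf g},\ell)$ with ${\bf g}\in\mathrm{Conj}(G)$ and $\ell>0$ such that $\ell\equiv\mu_j({\bf g})\pmod{2\pi}$ for some eigenvalue argument $\mu_j\in(0,2\pi]$ of a representative of ${\bf g}$. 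For each ${\bf g}$ I would single out the \emph{primary} family $\mathcal{O}_{{\bf g}}^{\min}$ at $\ell=\mu_{\min}({\bf g})$, whose orbits lie in the image of the minimal eigenspace of $g$ inside $S^{2n-1}/G$.

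The first step is to compute the Conley--Zehnder indices. The linearised Hamiltonian flow along each orbit splits as a direct sum of planar rotations $e^{i\mu_j t}$ along the eigenspaces of $g$. Combining the standard CZ formula for these rotations with the age identity $\sum_j\mu_j({\bf g})=2\pi\bigl(n-\mathrm{age}({\bf g})\bigr)$ implicit in (\ref{Equation age}), the top cohomology class of $\mathcal{O}_{{\bf g}}^{\min}$ lands in CZ degree exactly $2\,\mathrm{age}({\bf g})-1$. Higher-iterate families $\mathcal{O}_{{\bf g},\mu_{\min}({\bf g})+2\pi m}$ and non-primary eigenvalue families will appear in strictly lower CZ degrees, each full rotation shifting by $-2n$ since $Y$ is Calabi--Yau.

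Next, I would invoke Theorem~\ref{Theorem SH is zero introduction}: the vanishing $SH^*(Y)=0$ together with (\ref{Equation LES for SH and SH+}) gives the canonical isomorphism $SH^{*-1}_+(Y)\cong H^*(Y)$. Under the lifted Hamiltonian $S^1$-action (Lemma~\ref{lemma lift of action}), whose moment map is proper with critical set $E$, the standard theory of Hamiltonian circle actions on K\"ahler manifolds shows the moment map is Morse--Bott with only even-dimensional negative normal bundles at critical submanifolds. Hence $H^*(Y)$ is concentrated in even degrees, and so $SH^*_+(Y)$ is concentrated in odd degrees.

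Combining these in each CZ degree $d=2k-1$, the $E_1$ page of the Morse--Bott spectral sequence for $SC^*_+(Y)$ contains the tops of the primary families $\mathcal{O}_{{\bf g}}^{\min}$ for $\mathrm{age}({\bf g})=k$, plus excess contributions from higher-iterate and non-primary families. An inductive argument on the action-length filtration, together with the parity constraint from the previous step, should pair off all excess generators under the Morse--Bott differential, leaving exactly $|\mathrm{Conj}_k(G)|$ survivors. The hard part will be this cancellation: by the footnote after (\ref{Equation pi1 of Y minus E}) the Floer differential need not preserve free homotopy class, so pairings may mix conjugacy classes. Making the induction rigorous will likely require exploiting $S^1$-equivariance of the Morse--Bott complex together with careful control of the action jumps between neighbouring Morse--Bott families, so that the excess at CZ degree $d$ is matched bijectively with the excess at CZ degree $d+1$.
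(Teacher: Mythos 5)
Your proposal takes a genuinely different route from the paper, and although the first two-thirds are sound, the final cancellation step is a real gap, not a deferred detail.

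What you get right: the Conley--Zehnder computation placing the top class of the minimal family $\mathcal{O}_{\bf g}^{\min}$ in degree $2\,\mathrm{age}({\bf g})-1$, with iterates shifted by $-2n$, matches Theorem~\ref{Theorem CZ indices for Bott mfds}. Your observation that $H^*(Y)$ is concentrated in even degrees, because the Hamiltonian for the holomorphic $S^1$-action is a proper Morse--Bott function bounded below with critical submanifolds in $E$ having even-dimensional negative normal bundles (Frankel/Atiyah--Bott), is correct and is actually a slick alternative to how the paper deduces $SH^{\mathrm{even}}_+(Y)=0$. Combined with Theorem~\ref{Theorem SH is zero introduction}, it does force $SH^*_+(Y)$ into odd degrees.

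The gap is the last step. The non-equivariant Morse--Bott spectral sequence \eqref{Equation Morse-Bott sp seq} does \emph{not} degenerate: each $\mathcal{O}_{{\bf g},\ell}\cong S^{2d-1}/G_{g,\ell}$ contributes one class in odd degree (the top) and one in even degree (the bottom, at the even shift $\mu_{{\bf g},\ell}$ of \eqref{Equation mu B of Morse Bott submfds}). Knowing the abutment is odd-concentrated does not let you read off $\mathrm{rank}\,SH^{2k-1}_+(Y)$ from the $E_1$ page, because you would have to control which differentials kill which pairs, across all later pages and across conjugacy classes -- precisely the difficulty you flag in your final paragraph. An Euler-characteristic count (truncated in a range of degrees) could plausibly give the total rank $|\mathrm{Conj}(G)|$, but it cannot give the graded statement $\mathrm{rank}\,SH^{2k-1}_+=|\mathrm{Conj}_k(G)|$ without identifying the surviving generators, and a priori the survivor in a given parity class need not be the ``primary'' top you single out. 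The sentence ``an inductive argument \ldots should pair off all excess generators'' names the problem rather than solving it.

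The paper circumvents this by passing to the $S^1$-equivariant theory. There, each Morse--Bott family contributes $EH^*(\mathcal{O}_{{\bf g},\ell})\cong H^{*-1}(\mathcal{O}_{{\bf g},\ell}/S^1)\cong H^{*-1}(\C\P^{\dim_{\C}V_{g,\ell}-1})$ (Theorems~\ref{Theorem equiv coh of Morse Bott submanifolds} and~\ref{Theorem EH for free action}), which after the even shift $\mu_{{\bf g},\ell}$ lands entirely in odd total degree. So the equivariant Morse--Bott spectral sequence degenerates at $E_1$ for pure parity reasons -- this is the substitute for your unproved cancellation. Then $ESH^*(Y)=0$ and Corollary~\ref{Corollary LES for H SH and SHplus} show $ESH^*_+(Y)\cong H^{*+1}(Y)\otimes_{\K}\F$ is a \emph{free} $\F$-module; by stacking the columns for fixed ${\bf g}$ one finds exactly one $\F$-summand $\F[-\mu_{\bf g}]$, $\mu_{\bf g}=2\,\mathrm{age}({\bf g})-1$, per conjugacy class (Corollary~\ref{Corollary computation of ESH+}). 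Finally the Gysin sequence \eqref{Equation Gysin for SH+} splits (again by parity), identifying $SH^*_+(Y)\cong\ker\bigl(u:ESH^*_+\to ESH^{*+2}_+\bigr)$, whose graded rank is exactly $|\mathrm{Conj}_k(G)|$ in degree $2k-1$. If you want to salvage your approach, you would have to reconstruct this degeneration argument by other means; the equivariant machinery is what turns the parity heuristic into a proof.
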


The remainder of this Section will explain the proof of the above \fix{theorem}. A Morse-Bott argument (Appendix E) yields a convergent spectral sequence
\begin{equation}\label{Equation Morse-Bott sp seq}
E_1^{*,*}=\bigoplus H^*(\mathcal{O}_{{\bf g},\ell})[-\mu_{{\bf g},\ell}] \Rightarrow SH_+^*(Y),
\end{equation}
where $\mu_{{\bf g},\ell}$ denotes the shift in grading that needs to be applied to $H^*(\mathcal{O}_{{\bf g},\ell})$ in the Morse-Bott model for the symplectic chain complex ($\mu_{{\bf g},\ell}$ represents the grading of the orbit corresponding to the minimum of $\mathcal{O}_{{\bf g},\ell}$). It turns out that $\mu_{{\bf g},\ell}$ is always an even integer (Equation \eqref{Equation mu B of Morse Bott submfds}).

In particular, in the Morse-Bott model, each ${\bf g}\in \mathrm{Conj}(G)$ gives rise to a maximum $x_g$ (i.e.\,\,top degree generator) of $H^*(\mathcal{O}_{{\bf g},\ell})$, which is an orbit associated to the minimal eigenvalue $e^{i\ell}$ of ${\bf g}$, and its Conley-Zehnder index $\mu(x_g)$ satisfies
\begin{equation}\label{Equation grading of the max of Ogl}
\tfrac{1}{2}(\mu(x_g)+1)=\mathrm{age}(g).
\end{equation}

As the Example illustrated, without knowing $H^*(Y)$ it would be difficult to predict which generators survive in the limit of the spectral sequence \eqref{Equation Morse-Bott sp seq}. \fix{We can} however compute the $S^1$-equivariant analogue of \eqref{Equation Morse-Bott sp seq}, 
because in that case all generators on the $E_1$-page have odd total degree so
the spectral sequence degenerates on that page. Our goal is to recover $SH^*_+(Y)$ from this fact. 

The $S^1$-equivariant theory $ESH^*=SH^*_{S^1}$ was defined by Seidel \cite[Sec.(8b)]{Seidel}. It was constructed in detail by Bourgeois-Oancea \cite{BourgeoisOanceaGysin} and we review it in Appendix B. 
In the equivariant setup, the generators involve the moduli spaces of unparametrized orbits $\mathcal{O}_{{\bf g},\ell}/S^1$ and we prove in Theorem \ref{Theorem equiv coh of Morse Bott submanifolds} that  these can be identified with $\P_{\C}(V_{g,\ell})/G_{g,\ell}$, where we projectivise the $e^{i\ell}$-eigenspace $V_{g,\ell}$ of $g$, and $G_{g,\ell}\subset G$ is the largest subgroup which maps $V_{g,\ell}$ to itself (in fact $G_{g,\ell}=C_G(g)$ is the centraliser, by Lemma \ref{Lemma cyclic groups Gv}). If the characteristic of the underlying field does not divide $|G|$, that quotient by $G_{g,\ell}$ does not affect cohomology (Remark \ref{Remark cohomology of finite quotients}), so
$$
H^*(\mathcal{O}_{{\bf g},\ell}/S^1)\cong H^*(\P_{\C}(V_{g,\ell}))\cong H^*(\C \P^{\dim_{\C}\! V_{g,\ell}-1}).
$$
Up to an additional grading shift by one, which we will explain later, \fix{when working over a field of characteristic zero} we deduce that the $S^1$-equivariant spectral sequence has generators in odd degrees as claimed. \fix{The case of positive characteristic is discussed in Remark \ref{Remark Coefficients}.}

\begin{example}
%
%
Continuing the above example, the equivariant complex $ESC^*_+(Y)$  becomes
$$
\xymatrix@C=10pt@R=0pt{ 
 & & \boxed{\textcolor{blue}{\bf +1}} & \boxed{\textcolor{red}{\bf -1}}
 & \textcolor{blue}{\bf -3}  & \textcolor{red}{\bf -5}  & \textcolor{blue}{\bf -7}  & \textcolor{red}{\bf -9}  & \textcolor{blue}{\bf -11}   & \ldots
 \\
  & & \textcolor{blue}{\bf -1} & \textcolor{red}{\bf -3} & \textcolor{blue}{\bf -5} & \textcolor{red}{\bf -7} & \textcolor{blue}{\bf -9} & \textcolor{red}{\bf -11} & \textcolor{blue}{\bf -13}  & \ldots  
}
$$ 
where each column is a shifted copy of $H^{*}(\R\P^3/S^1)\cong H^{*}(\C\P^1)$, for example the second column is shifted up by $1+\mu_{I,2\pi}=-3$.
\end{example}

Before we can continue the outline, we need a remark about the coefficients used in the equivariant theory.
In the Calabi-Yau setup ($c_1(M)=0$), symplectic cohomology $SH^*(M)$ is a $\Z$-graded $\K$-module, where $\K$ is the {\bf Novikov field},
\begin{equation}\label{Equation Novikov field}
\K=\left\{\sum_{j=0}^{\infty} n_j T^{a_j}: a_j\in \R, a_j\to \infty, n_j\in \mathcal{K}\right\}.
\end{equation}
 Here $\mathcal{K}$ is any given field of characteristic zero (we discuss non-zero characteristics later), and $T$ is a formal variable in grading zero. 
Let $\K(\!(u)\!)$ denote the formal Laurent series in $u$ with coefficients in $\K$, where $u$ has degree $2$, and abbreviate by $\F$ the $\K[\![u]\!]$-module 
$$
\F = \K(\!(u)\!)/u\K[\![u]\!] \cong H_{-*}(\C\P^{\infty}).
$$
Here $u^{-j}$ in degree $-2j$ formally represents $[\C\P^j]\in H_{-*}(\C\P^{\infty})$ negatively graded, and the $\K[\![u]\!]$-action is induced by the (nilpotent) cap product action by $H^*(\C\P^{\infty})=\K[u]$.

In Appendix B, we construct the $S^1$-equivariant symplectic cohomology as a 
$\K[\![u]\!]$-module $ESH^*(Y)$ together with a canonical $\K[\![u]\!]$-module homomorphism
\begin{align*}
c^*:EH^*(Y)\cong H^*(Y)\otimes_{\K} \F \to ESH^*(Y),
\end{align*}
where in general $EH^*(Y)$ denotes the locally finite $S^1$-equivariant homology $H_{2n-*}^{\mathrm{lf},S^1}(Y)$, not $H^*_{S^1}(Y)$. It becomes $H^*(Y)\otimes_{\K}\F$ above, as the $S^1$-action is trivial on constant orbits. 
\fix{Similar to} Theorem \ref{Theorem SH is zero introduction},\footnote{The vanishing follows by the spectral sequence for the $u$-adic filtration (see \eqref{Equation spectral sequence u adic filtration}), and \eqref{Equation ESH+ is ordinary EH} follows by the equivariant analogue of \eqref{Equation LES for SH and SH+} (Corollary \ref{Corollary LES for H SH and SHplus}).} \fix{we have} $ESH^*(Y)=0$ and there is a canonical $\K[\![u]\!]$-module isomorphism
\begin{equation}\label{Equation ESH+ is ordinary EH}
ESH^{*-1}_+(Y)\cong EH^*(Y) \cong H^*(Y)\otimes_{\K} \F.
\end{equation}
This implies that the $\K[\![u]\!]$-module $ESH^{*}_+(Y)$ is in fact a free $\F$-module, \fix{and we will see in the proof of Corollary \ref{Corollary computation of ESH+} that its rank equals 
the Euler characteristic of $Y$,
$$|\mathrm{Conj}(G)|=\mathrm{rank}_{\F}\, ESH^*_+(Y)=\dim_{\K} ESH^{-1}_+(Y)=\sum \dim_{\K} H^{2j}(Y) = \chi(Y).$$}%
As anticipated previously, the equivariant analogue of \eqref{Equation Morse-Bott sp seq}, working in characteristic zero, yields the following isomorphism of $\K$-vector spaces (but not as $\K[\![u]\!]$-modules)
$$
ESH^*_+(Y)\cong \oplus EH^*(\mathcal{O}_{{\bf g},\ell})[-\mu_{{\bf g},\ell}]
\cong \oplus H^{*}(\mathcal{O}_{{\bf g},\ell}/S^1)[-1-\mu_{{\bf g},\ell}]\cong \oplus H^{*}(\C \P^{\dim_{\C}\! V_{g,\ell}-1})[-1-\mu_{{\bf g},\ell}]
$$
where we now explain the second isomorphism.
\fix{For any closed orientable manifold} $X$ with an $S^1$-action with finite stabilisers, and working in characteristic zero, $EH^*(X)\cong H^{*-1}(X/S^1)$ as $\K[\![u]\!]$-modules, with $u$ acting by \fix{cup product} with the negative of the Euler class of $X\to X/S^1$ (Theorem \ref{Theorem EH for free action}). In our case, the $u$-action on $H^{*}(\C \P^{\dim_{\C}\! V_{g,\ell}-1})$ \fix{is cup product by $-\mathrm{PD}[H]$, where $H$ is the hyperplane class}.

Whilst the usual symplectic chain complex is generated by $1$-orbits%
\footnote{Strictly, in Floer theory one must pick a reference loop for each $1$-orbit $x:[0,1]\to Y$, as the action functional is multi-valued. In our setup, $Y$ is simply connected \cite[Theorem 7.8]{Kollar}, so one can just pick a smooth filling disc $\widetilde{x}:\mathbb{D}\to Y$, $\partial \widetilde{x}=x$. Two choices of filling disc $\widetilde{x}_1,\widetilde{x}_2$ differ by a sphere $S=[\widetilde{x}_1\#-\widetilde{x}_2]\in H_2(Y)$ and one identifies \fix{$T^{\omega(S)} \widetilde{x}_2 = \widetilde{x}_1$. As $T$ has} grading zero (as $c_1(Y)=0$), these choices do not matter. A canonical choice of filling disc for $x_g$, for an eigenvalue $e^{i\ell}$ of $g\in G$, is obtained by applying the action of 
$\{re^{it}: 0\leq r\leq 1, 0\leq t \leq \ell \}\subset \C^*$ to $x_g(0)$ to define a map $\D \to Y$.}
over a Novikov field $\K$, the equivariant theory is generated by $1$-orbits over the $\K[\![u]\!]$-module $\F$. There is a natural inclusion $SC^*_+(Y)\to ESC^*_+(Y)$ as the $u^0$-part, so a $\K[d]$ summand in $SC^*_+(Y)$ belongs to a copy of $\F[d]=\K[d]\oplus \K[d+2] \oplus \K[d+4] \oplus \cdots$ in $ESC^*_+(Y)$ where the $u$-action translates the copies $\K[d+2j]\to \K[d+2j-2]$. These summands may however unexpectedly disappear in cohomology. We proved above that \fix{$ESH_+^*(Y)$} is a free $\F$-module, so the $\K$-summands appearing in $\fix{ESH^*_+(Y)}$ must organise themselves into free $\F$-summands, in particular the number of $\F$-summands in odd degrees is simply the dimension $\dim_{\K} \fix{ESH^{-2m-1}_+(Y)}$ for sufficiently large $m$ (these dimensions must stabilize). By considering the gradings of the $\mathcal{O}_{{\bf g},\ell}$, we check explicitly in Corollary \ref{Corollary computation of ESH+} that there are $|\mathrm{Conj}(G)|$ free $\F$-summands in $ESH^*_+(Y)$.

\begin{example}
%
%
Continuing the above example, the $ESC^*_+(Y)$ complex must have two $\F$-summands in $ESH^*_+(Y)$, generated by the two orbits labelled by $\pm I\in G=\Z/2$,
\begin{align*} 
ESH^*_{+,\textcolor{blue}{\bf -I}}(Y) = \K[\textcolor{blue}{\bf -1}]\oplus \K[1]\oplus \K[3] \oplus \cdots \cong
\K[\textcolor{blue}{\bf -1}]\oplus \K[\textcolor{blue}{\bf -1}]u^{-1}\oplus \K[\textcolor{blue}{\bf -1}]u^{-2} \oplus \cdots
\cong \mathbb{F}[\textcolor{blue}{\bf -1}]
\\
ESH^*_{+,\textcolor{red}{\bf +I}}(Y) = \K[\textcolor{red}{\bf +1}]\oplus \K[3] \oplus \K[5] \oplus \cdots 
\cong
\K[\textcolor{red}{\bf +1}]\oplus \K[\textcolor{red}{\bf +1}]u^{-1} \oplus \K[\textcolor{red}{\bf +1}]u^{-2} \oplus \cdots 
\cong \mathbb{F}[\textcolor{red}{\bf +1}]
\end{align*} 
Thus we obtain one free $\F$-summand for each conjugacy class.
\end{example}

\begin{example}
In the case of $A_n$ surface singularities  
$\C^2/G$, so $G$ generated by $\left(\begin{smallmatrix} \zeta & 0 \\ 0 & \zeta^{-1} \end{smallmatrix}\right)$ where $\zeta=e^{2\pi i/(n+1)},$
the work of Abbrescia, Huq-Kuruvilla, Nelson and Sultani \cite{nelsonreebdynamics} is an independent computation of $ESH^*_+(Y)$, and indeed it has rank $n+1=|\mathrm{Conj}(G)|$ (their grading is by $\mathrm{CZ}$ whereas our grading is by $n-\mathrm{CZ}$, see Remark \ref{Remark from CZReeb to CZHam}).
\end{example}

To recover $SH^*_+(Y)$ from $ESH^*_+(Y)$ we use the Gysin sequence \cite{BourgeoisOanceaGysin} (see Appendix B)
\begin{equation}\label{Equation Gysin for SH+}
\cdots \to SH^*_+(Y) \stackrel{\mathrm{in}}{\longrightarrow} ESH^*_+(Y) \stackrel{u}{\longrightarrow} ESH^{*+2}_+(Y) \stackrel{b}{\longrightarrow} SH^{*+1}_{+}(Y)\to \cdots
\end{equation}
\fix{We work under the assumption that the characteristic of the field is coprime to $2,3,\ldots,|G|.$}
As $ESH^*_+(Y)$ lives in odd grading, the sequence splits as 
$$
0 \to SH^{\mathrm{odd}}_+(Y) \hookrightarrow ESH^{\mathrm{odd}}_+(Y) \stackrel{u}{\to} ESH^{\mathrm{odd}+2}_+(Y)\to SH^{\mathrm{odd}+1}_{+}(Y) \to 0.
$$
So $SH^{\mathrm{odd}}_+(Y)$ is the $u^0$-part of $ESH^{\mathrm{odd}}_+(Y)$, 
and $SH^{\mathrm{even}}_+(Y)=0$ since $u$ acts surjectively on $ESH^*_+(Y)$ as it is a free $\F$-module. This yields the $\K$-vector space isomorphism
$$
SH^*_+(Y)\, \cong \, \ker \, ( u: ESH^*_+(Y) \to ESH^*_+(Y) ).
$$
Theorem \ref{Theorem rank of SH+} now follows, since we showed that $ESH^*_+(Y)$ has $|\mathrm{Conj}(G)|$ free $\F$-summands. In particular, Corollary \ref{Corollary computation of ESH+} shows that the $u^0$-parts of those $\F$-summands in $ESH^*_+(Y)$ can be labelled by the maxima mentioned in \eqref{Equation grading of the max of Ogl} (the labelling is non-canonical, see Sec.\ref{Subsection Naturality of the basis}).%
%
%
%
\fix{
\begin{remark}[Coefficients]\label{Remark Coefficients} We showed $SH^{*-1}_+(Y)$ recovers the ordinary cohomology $H^{*}(Y,\K)$ over the Novikov field $\K$. But $\K$ is flat over the base field $\mathcal{K}$ (indeed free, a $\mathcal{K}$-vector space) so $H^*(Y,\K)\cong H^*(Y,\mathcal{K})\otimes_{\mathcal{K}} \K$ determines $H^*(Y,\mathcal{K})$.
Thus we recover the McKay correspondence over fields $\mathcal{K}$ of characteristic zero, in particular over $\Q$.
\\ \indent
Our proof also works when the characteristic is coprime to all integers $2,3,\ldots,|G|$.
The key idea is that the claim really only relies on understanding the spectral sequence for $ESH^*_+(Y)$ in odd degrees in $[-1,\dim_{\R} Y - 3]$. In particular the degree $-1$ part suffices to determine the rank of $ESH^*_+(Y)$ over $\F$, which is equal to both $|\mathrm{Conj}(G)|$ and $\chi(Y)$ (see Corollary \ref{Corollary computation of ESH+}).
 The assumption on the characteristic allows us to relate cohomologies of spaces before and after quotienting by finite groups which involve finite stabilisers, whose size is at most $|G|$ (Theorems \ref{Theorem equiv coh of Morse Bott submanifolds} and \ref{Theorem EH for free action}). 
\\ \indent
If $\mathcal{K}$ is any commutative Noetherian ring, 
%
%
the Novikov ring $\K$ is flat over $\mathcal{K}$. The obstruction to running the above proof is the failure of the isomorphism $EH^*(\mathcal{O}_{{\bf g},\ell})\cong H^{*-1}(\P V_{g,\ell})^{G_{g,\ell}}$. If this fails, there may be unexpected contributions in the Floer cohomology.
%
%
\end{remark}
}
Combining Theorems \ref{Theorem SH is zero introduction} and \ref{Theorem rank of SH+}, we deduce the McKay Correspondence:

\begin{corollary}[Generalised McKay Correspondence]\label{Corollary McKay via Floer}
Let $Y$ be any quasi-projective crepant resolution of an isolated singularity $\C^n/G$, where $G\subset SL(n,\C)$ is a finite subgroup. Let $\mathcal{K}$ be any field of characteristic zero, or assume $\mathrm{char}\, \mathcal{K}$ is coprime to all integers\,\,$\leq |G|$. Then $H^*(Y,\mathcal{K})$ vanishes in odd degrees and has rank $|\mathrm{Conj}_k(G)|$ in even degrees $2k$.
\end{corollary}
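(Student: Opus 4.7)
The plan is to deduce the corollary by combining Theorem \ref{Theorem SH is zero introduction} and Theorem \ref{Theorem rank of SH+} to first compute $H^*(Y,\K)$ with coefficients in the Novikov field, and then to pass from $\K$ to $\mathcal{K}$ by a flat base change argument.

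First I would apply Theorem \ref{Theorem SH is zero introduction}, which provides the canonical $\Z$-graded isomorphism $SH^{*-1}_+(Y)\cong H^{*}(Y,\K)$. Next, Theorem \ref{Theorem rank of SH+} computes $\mathrm{rank}_{\K}\,SH^{2k-1}_+(Y)=|\mathrm{Conj}_k(G)|$; the Gysin-sequence argument sketched in Subsection \ref{Subsection An outline of our proof using Floer theory} moreover shows $SH^{\mathrm{even}}_+(Y)=0$, since $u$ acts surjectively on the free $\F$-module $ESH^*_+(Y)$, so $SH^*_+(Y)$ is concentrated in odd degrees. Combining these yields $\dim_{\K} H^{2k}(Y,\K)=|\mathrm{Conj}_k(G)|$ and $H^{\mathrm{odd}}(Y,\K)=0$ over the Novikov field.

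To descend from $\K$ to a field $\mathcal{K}$ of the stated characteristic, I would observe that $Y$ retracts onto the compact exceptional locus $E=\pi^{-1}(0)$ along the lift of the dilation flow on $\C^n/G$, so $H^*(Y,\mathcal{K})$ is finite-dimensional in each degree. The Novikov field $\K$ is a free $\mathcal{K}$-module, hence flat, and the universal coefficient theorem (equivalently, the K\"{u}nneth isomorphism for constant coefficients on a space of finite type) gives $H^*(Y,\K)\cong H^*(Y,\mathcal{K})\otimes_{\mathcal{K}}\K$. Comparing $\K$-dimensions with the previous step transfers the vanishing in odd degrees and the rank computation from $\K$-coefficients to $\mathcal{K}$-coefficients.

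The hypothesis on $\mathrm{char}\,\mathcal{K}$ is needed only inside the Floer-theoretic input: in the arguments for equivariant cohomology of Morse-Bott submanifolds and of free $S^1$-actions with finite stabilisers (Theorems \ref{Theorem equiv coh of Morse Bott submanifolds} and \ref{Theorem EH for free action}), one invokes transfer/averaging over stabiliser subgroups, each of order dividing $|G|$, and one needs these orders to be invertible in $\mathcal{K}$. With Theorems \ref{Theorem SH is zero introduction} and \ref{Theorem rank of SH+} in hand, the corollary is essentially formal; the main thing to be careful about is bookkeeping, namely verifying that the coprimality hypothesis really is enough at every appearance of a finite-group quotient in the Morse-Bott spectral sequence \eqref{Equation Morse-Bott sp seq} and its $S^1$-equivariant counterpart, and confirming that the grading shifts $\mu_{{\bf g},\ell}$ are correctly pinned down so that each conjugacy class ${\bf g}$ of age $k$ contributes a generator in degree $2k-1$ of $SH^*_+(Y)$, matching degree $2k$ of $H^*(Y,\mathcal{K})$ after the index shift from Theorem \ref{Theorem SH is zero introduction}.
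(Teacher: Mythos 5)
Your proposal is correct and is essentially identical to the paper's argument: combine Theorems~\ref{Theorem SH is zero introduction} and~\ref{Theorem rank of SH+} (using the Gysin sequence to kill $SH^{\mathrm{even}}_+$), then transfer from $\K$ to $\mathcal{K}$ by flatness/freeness of $\K$ over $\mathcal{K}$, with the characteristic hypothesis feeding into Theorems~\ref{Theorem equiv coh of Morse Bott submanifolds} and~\ref{Theorem EH for free action}. One small bookkeeping remark: the relevant $S^1$-stabilisers in Theorem~\ref{Theorem EH for free action} have orders equal to the Reeb-orbit multiplicities $m_p$ of Lemma~\ref{Lemma multiplicity of Reeb orbit}, which are bounded by $|G|$ but need not \emph{divide} $|G|$, which is exactly why the paper's hypothesis asks for coprimality to all integers $\leq |G|$ rather than just to $|G|$.
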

%
%
%
%
%
%
%
%
\begin{example}
For $X=\C^3/G$, with $G=\Z/3$ acting diagonally by powers of $\zeta=e^{2\pi i/3}$. The blow up $Y$ of $X$ at $0$ is a crepant resolution with exceptional locus $E\cong \C\P^2$. The Morse-Bott submanifolds $\textcolor{blue}{\mathcal{O}_{\zeta,2\pi/3}},\textcolor{red}{\mathcal{O}_{\zeta^2,4\pi/3}}$, $\textcolor{black}{\mathcal{O}_{I,2\pi}},\textcolor{blue}{\mathcal{O}_{\zeta,8\pi/3}},$ etc. are copies of $S^5/G$.
Following analogous notation as in the example of $T^*\C\P^1$, the acylic chain complex $SC^*(Y)$ over $\mathcal{K}=\C$ is:
$$
\xymatrix@C=10pt@R=0pt{ 
\textcolor{black}{\bf (4)} & & \boxed{\textcolor{blue}{\bf -1}} \ar@{-->}^-{}[ddll] & \boxed{\textcolor{red}{\bf +1}}
\ar@/^0.7pc/@{-->}^-{}[dlll]
 & \boxed{\textcolor{black}{\bf +3}} \ar@/^1.6pc/@{-->}^-{}[llll]  & \textcolor{blue}{\bf -7} \ar@{->}^-{}[ddlll] & \textcolor{red}{\bf -5} \ar@{->}^-{}[ddlll] & \textcolor{black}{\bf -3} \ar@{->}^-{}[ddlll] & \textcolor{blue}{\bf -13} \ar@{->}^-{}[ddlll]  & \ldots \ar@{->}^-{\ldots}[ddlll] 
\\
\textcolor{black}{\bf (2)}
\\
\textcolor{black}{\bf(0)} & & \textcolor{blue}{\bf -6} & \textcolor{red}{\bf -4} & \textcolor{black}{\bf -2} & \textcolor{blue}{\bf -12} & \textcolor{red}{\bf -10} & \textcolor{black}{\bf -8} & \textcolor{blue}{\bf -18}  & \ldots
%
%
\\ 
}
$$
where the generators in round brackets yield $H^*(Y)\cong H^*(\C\P^2)\cong \K\oplus \K[-2] \oplus \K[-4]$, and the boxed generators yield $SH^*_+(Y)\cong\textcolor{blue}{\bf \K[+1]}\oplus \textcolor{red}{\bf\K[-1]} \oplus \textcolor{black}{\bf\K[-3]}$. The columns in the Morse-Bott complex for $ESC_+^*(Y)$ are shifted copies of $H^*(\C\P^2,\K)$ instead of $H^*(S^5,\K)$:
$$
\xymatrix@C=10pt@R=0pt{ 
  & & \boxed{\textcolor{blue}{\bf -1}}  & \boxed{\textcolor{red}{\bf +1}}
 & \boxed{\textcolor{black}{\bf +3}}   & \textcolor{blue}{\bf -7}  & \textcolor{red}{\bf -5}  & \textcolor{black}{\bf -3}  & \textcolor{blue}{\bf -13}   & \ldots  
\\ \vspace{-5mm}
   & & \textcolor{blue}{\bf -3}  & \textcolor{red}{\bf -1}
 & \textcolor{black}{\bf +1}   & \textcolor{blue}{\bf -9}  & \textcolor{red}{\bf -7}  & \textcolor{black}{\bf -5}  & \textcolor{blue}{\bf -15}  & \ldots 
\\
  & & \textcolor{blue}{\bf -5} & \textcolor{red}{\bf -3} & \textcolor{black}{\bf -1} & \textcolor{blue}{\bf -11} & \textcolor{red}{\bf -9} & \textcolor{black}{\bf -7} & \textcolor{blue}{\bf -17}  & \ldots
%
%
}
$$
Thus $ESH^*_+(Y)\cong\textcolor{blue}{\bf \F[+1]}\oplus \textcolor{red}{\bf\F[-1]} \oplus \textcolor{black}{\bf\F[-3]}$. The three summands correspond to the three conjugacy classes of $G$. This also holds for any field $\mathcal{K}$ of characteristic coprime to $2$ and $3$. 
\end{example}
%
\subsection{Naturality of the basis}
\label{Subsection Naturality of the basis}
We return to the Open Problem at the end of Section \ref{Subsection The generalised McKay correspondence}. \fix{Assume for now that $\K$ has characteristic zero.}
In Corollary \ref{Corollary computation of ESH+} we showed that given ${\bf g}\in \mathrm{Conj}(G)$, the sum $F_{\bf g}=\bigoplus_{\ell>0} \fix{EH^*}(\mathcal{O}_{{\bf g},\ell})[-\mu_{{\bf g},\ell}]$ stacks together as a $\K$-vector space to yield a copy of the $\K$-vector space $\F$. This does not hold as $\K[\![u]\!]$-modules because the $E_1$-page of the Morse-Bott spectral sequence has forgotten the structure given by multiplication by $u$, yielding only a non-canonical $\K$-linear isomorphism 
$$\bigoplus_{\bf g \in \mathrm{Conj}(G)}F_{\bf g}\cong ESH^*_+(Y)\cong H^{*+1}(Y)\otimes_{\K} \F,$$
using \eqref{Equation ESH+ is ordinary EH}. We conjecture that each maximum $x_g$ mentioned in \eqref{Equation grading of the max of Ogl} gives rise to a generator $[x_g+c_g]\in SH^*_+(Y)$, where $c_g$ is a ``correction term''\footnote{the correction term is not unique, 
%
%
and arises because the $E_1^{*,*}$ page for the equivariant Morse-Bott spectral sequence is isomorphic to the associated graded algebra of $ESH^*_+(Y)$.} with strictly higher $F$-filtration value  than $x_g$ (in the sense of Appendix E). Theorem \ref{Theorem SH is zero introduction} would then yield generators of $H^*(Y)$ labelled by $\mathrm{Conj}(G)$, respecting \eqref{Equation grading of the max of Ogl}. Nevertheless, the correction terms are not canonical.

\fix{When $\K$ has positive characteristic (coprime to $2,3,\ldots,|G|$), the stacking mentioned for $F_{\bf g}$ only holds up to possible torsion summands (by the universal coefficient theorem), but such torsion must eventually cancel out in the spectral sequence as $ESH^*_+(Y)$ is free over $\F$. It is plausible that the Conjecture would persist to hold in positive characteristic.
}
 
Our modified approach \cite{McLean-Ritter} mentioned in Section \ref{Subsection Isolated singularities} is expected to yield a clean naturality statement, in addition to discussing product structures and extending the results to the case of non-isolated singularities.

\fix{The map from \eqref{Equation LES for SH and SH+},
\begin{equation}\label{Equation SH+ to H}
SH^{*-1}_+(Y) \to H^*(Y)\cong H_{2\dim_{\C}Y-*}^{\mathrm{lf}}(Y),
\end{equation}
 is given by applying the Floer boundary operator of the full $SC^*(Y)$. Once we extend our work to non-isolated singularities in \cite{McLean-Ritter}, it would be interesting to investigate how our basis compares via \eqref{Equation SH+ to H} to the bases built for $H^*(Y)$ by Ito-Reid \cite{Ito-Reid} for $n=3$, and by Kaledin \cite{Kaledin} for $n=2m$ and $G\subset \mathrm{Sp}(2m;\C)\cap U(2m)$.
}

\begin{remark}
In the work of Koll{\'a}r-N{\'e}methi \cite[Corollary 29]{kollarnemethi} a natural bijection arose between the conjugacy classes of $G$ and the irreducible components of the space $\mathrm{ShArc}(0\in X)$ of \emph{short complex analytic arcs} $\overline{\mathbb{D}}\to X=\C^n/G$ which hit the singular point only at $0\in \overline{\mathbb{D}}$, where $G\subset GL(n,\C)$ is any finite subgroup acting freely on $\C^n\setminus \{0\}$. This in turn gives rise to a natural map \cite[Paragraph 27]{kollarnemethi} from conjugacy classes to subvarieties in any resolution $Y$ of $X$, by considering the subsets swept out by the lifts of the arcs under evaluation at $0\in \overline{\mathbb{D}}$. 
\fix{The operator in \eqref{Equation SH+ to H}} counts finite energy Floer cylinders $u: \R \times S^1 \to Y$ converging to a Hamiltonian $1$-orbit at the positive end. Such maps have a removable singularity at the negative end,
%
%
and yield an extension $u: \C \to Y$ with $u(0)\in E$. Evaluation at $0$ sweeps the required locally finite pseudo-cycle in $H_*^{\mathrm{lf}}(Y)$. As $u$ is asymptotically holomorphic near $0\in \C$, the projection to $\C^n/G$ should approximate an analytic arc through $0$. A possible approach to obtain genuine analytic arcs, would be to first perform a neck-stretching argument in the sense of Bourgeois-Oancea \cite{Bourgeois-Oancea} so that (the main component of the) Floer solution converges to a holomorphic map $u:\C \to Y$ that is asymptotic to a Reeb orbit at infinity. 
It would be interesting to investigate more closely the relationship between these two points of view.
\end{remark}

\begin{remark}
Abreu-Macarini \fix{\cite[Theorem 1.12]{AbreuMacarini}} proved that the \emph{mean Euler characteristic} $\chi(M,\xi)$ of a Gorenstein toric contact manifold $(M,\xi)$ is equal to half of the Euler characteristic of any crepant toric symplectic filling $Y$. Recall $\chi(M,\xi)={\displaystyle \lim_{k\to \infty}} \frac{1}{2k} \sum_{j=0}^k \dim HC_{2j}(M,\xi)$ is defined in terms of the linearised contact homology, cf.\,Ginzburg-G\"{o}ren \cite{GinzburgGoren}. By Bourgeois-Oancea \cite{Bourgeois-Oancea-S1}, this homology is isomorphic to the positive $S^1$-equivariant symplectic homology. Our Corollary \ref{Corollary McKay via Floer} implies that $\chi(M,\xi)$ is half of the number of $\F$-summands in $ESH^{*-1}_+(Y)\cong H^*(Y)\otimes_{\K} \F$, thus it yields an alternative perspective of the result of Abreu-Macarini.
\end{remark}

\section{Proofs}
\subsection{Symplectic description of quotient singularities}
\label{Subsection symplectic data for the quotient}
Let $X=\C^n/G$ for any finite subgroup $G\subset SU(n)$ acting freely on $\C^n\setminus 0$, and recall Lemma \ref{Lemma G acts freely}. Viewed as a convex symplectic manifold (in the sense of Sec.\ref{Subsection convex symplectic manifolds}), $\C^n$ has data
$$\textstyle
\omega=\sum dx_j\wedge dy_j, \qquad \theta = \tfrac{1}{2}\sum  x_j dy_j - y_j dx_j, \qquad Z = \tfrac{1}{2}\sum x_j \partial_{x_j} + y_j \partial_{y_j}=\tfrac{1}{4}\nabla R,\qquad R = |z|^2 
$$
in coordinates $z_j=x_j+i y_j$. 
As $G\subset SU(n)$ preserves $R$ and the metric, it preserves all of the above data, 
%
%
so that descends to corresponding data $\omega_G$, $\theta_G$, $Z_G$, $R_G$ on $(X\setminus 0)/G$. Call $\pi_G: S^{2n-1}\to S_G=S^{2n-1}/G$ the induced quotient map on the unit sphere $S^{2n-1}\subset \C^n$ (note $S_G$ is smooth as $G$ acts freely). Using terminology from Appendix C, the {\bf round contact form} $\alpha_0=\theta|_{S^{2n-1}}$ yields a contact form $\alpha_G=\theta_G|_{S_G}$ on $S_G$, and they define contact structures $\xi=\ker \alpha_0$ on $S^{2n-1}$, $\xi_G=\ker \alpha_G$ on $S_G$.
The Reeb flow $\phi_{2t}$ for $\alpha_0$ descends to the Reeb flow on $S_G$ for $\alpha_G$, where
\begin{equation}\label{Equation Reeb flow for sphere}
 \phi_{t} : S^{2n-1} \to S^{2n-1}, \quad \phi_{t}(z) = e^{it} z.
\end{equation}
%
\begin{remark} We will from now on refer to $\phi_t$ as the {\bf Reeb flow} (rather than $\phi_{2t}$) so the periods/lengths of Reeb orbits we will refer to are, strictly, the double of their actual values.
\end{remark}
\subsection{The closed Reeb orbits in the quotient}
\label{Subsection The closed Reeb orbits in the quotient}
Let $g\in G$ and let $V$ be the $e^{i\ell}$-eigenspace of $g$ for some given $\ell>0\in \R$,
$$V=V_{g,\ell}=\{v\in \C^n: g(v)=e^{i\ell}v\}.$$
%
%
By Lemma \ref{Lemma primitive orbits of the S1 action}, $h(V)=V_{hgh^{-1},\,\ell}$ is the $e^{i\ell}$-eigenspace of $hgh^{-1}$. We sometimes abusively write $\dim_{\C} V_{\mathbf{g},\ell}$ for a class ${\bf g}=[g]\in \mathrm{Conj}(G)$, since the dimension does not depend on the choice of representative $g$.
Let $\P(V)$ be the complex projectivisation. Define:
$$
\begin{array}{rcl}
G_v &=& \{ h\in G: v \textrm{ is an eigenvector of } h\} \qquad (\textrm{where }v\in \C^n\setminus 0).\\[1mm]
G_V &=& \{h\in G: h(V)\subset V\}.
\\[1mm]
G_{g,\ell} &=& {\displaystyle \bigcup_{v\in V\setminus 0}} G_v = \{h\in G: V_{h,\ell'} \cap V \neq \{0\}  \textrm{ for some }\ell'\in \R\}.
\end{array}
$$ 
Observe that $G_{h(v)}=hG_v h^{-1}$, so we sometimes abusively write $|G_p|$ for $p=[v]\in (\C^n\setminus 0)/G$ as the size of the subgroup $G_v$ does not depend on the choice of representative $v$.
\begin{lemma}\label{Lemma cyclic groups Gv}\strut
\begin{enumerate}
\item \label{Item 1 CGg} $G_v\subset G$ is a cyclic subgroup of size $|G_v|=|\{\lambda\in S^1: h(v)=\lambda v \textrm{ for some }h\in G\}|$.
\item \label{Item 3 CGg} $\{h\in G: V\cap h(V) \neq \{0\}\}=C_G(g)$ recovers the centraliser of $g$.
\item \label{Item 2 CGg}\label{Item 4 CCg} $G_{g,\ell}=G_V=C_G(g)$. 
\item \label{Item 5 CCg} $G_V$ acts on $\P(V)$ with stabilisers $\mathrm{Stab}_{G_V}([v]) = G_v$, and the size $|C_G(g)|/|G_v|$ of the orbit of $[v]$ is the size of the fibre of $\P(V)\to \P(V)/G_V$.
\end{enumerate}
\end{lemma}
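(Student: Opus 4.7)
\medskip

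My plan is to work through the four parts in order, with the free action of $G$ on $\C^n\setminus 0$ (Lemma \ref{Lemma G acts freely}) and the diagonalizability of finite-order elements of $SU(n)$ as the main tools.

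For (1), I would define the map $\chi_v:G_v\to S^1$ sending $h$ to the eigenvalue $\lambda$ with $h(v)=\lambda v$. This is a group homomorphism because $(h_1 h_2)(v)=\chi_v(h_1)\chi_v(h_2)v$. Its kernel is $\mathrm{Stab}_G(v)$, which is trivial by the free action, so $G_v\hookrightarrow S^1$ with finite image; any finite subgroup of $S^1$ is cyclic, and the size equals the cardinality of the image, which is exactly the set on the right-hand side.

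For (2), I would argue both inclusions using that $g$ has finite order and hence is diagonalizable. If $h\in C_G(g)$ and $v\in V$, then $g(h(v))=h(g(v))=e^{i\ell}h(v)$, so $h(V)\subseteq V$ and hence $V\cap h(V)=V\neq\{0\}$. Conversely, if $V\cap h(V)\neq\{0\}$, pick $0\neq w\in V\cap h(V)$ and write $w=h(u)$ with $u\in V$; then $u\neq 0$ and a direct computation gives $gh(u)=hg(u)$, so the commutator $g^{-1}h^{-1}gh$ fixes $u\neq 0$ and must be the identity by the free action.

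For (3), the inclusion $G_V\subseteq\{h:V\cap h(V)\neq\{0\}\}=C_G(g)$ is immediate by taking $v\in V\setminus 0$. For $C_G(g)\subseteq G_V$ we use part (2). For $G_{g,\ell}=C_G(g)$, the inclusion $G_{g,\ell}\subseteq C_G(g)$ follows because an eigenvector $v\in V$ of $h$ (say $h(v)=\lambda v$) makes $g$ and $h$ commute on $v$, so again the free action forces $h\in C_G(g)$. For the reverse inclusion, $h\in C_G(g)=G_V$ preserves $V$, and since $h$ has finite order it is diagonalizable; hence $h|_V$ has an eigenvector in $V$, placing $h$ in $G_{g,\ell}$.

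For (4), the action of $G_V$ on $\P(V)$ is well-defined by (3). The stabilizer of $[v]$ consists of those $h\in G_V$ with $h(v)\in\C v$, i.e., $h\in G_v\cap G_V$; but part (2) applied as in (3) shows $G_v\subseteq C_G(g)=G_V$, so $\mathrm{Stab}_{G_V}([v])=G_v$. The orbit-stabilizer theorem then gives the fibre size $|C_G(g)|/|G_v|$. The only subtlety I anticipate is being careful that the various eigenvalues and eigenvectors match correctly in the converse direction of (2); everything else is a diagram-chase through the definitions combined with the free action.
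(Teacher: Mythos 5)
Your proposal is correct and follows essentially the same route as the paper's proof: the injective eigenvalue homomorphism $G_v\hookrightarrow S^1$ for (1), the free-action-forces-commutation argument in both directions for (2), chaining these together for (3), and orbit-stabilizer for (4) all match the paper's reasoning up to minor reordering of the inclusions.
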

\begin{proof}
(1) Consider $\{\lambda\in S^1: h(v)=\lambda v \textrm{ for some }h\in G_v\}$. As this is a finite subgroup of $S^1$, it is cyclic. Pick a generator $\lambda$, associated to $h\in G_v$ say. Then for any $h'\in G_v$, there is a $k\in \N$ satisfying $h'(v)=\lambda^k v = h^k(v)$, which forces $h'=h^k$ since $G$ acts freely on $\C^n\setminus 0$.
\\
\indent (2) $g(hv)=e^{i\ell}hv$ implies $h^{-1}g h v =e^{i\ell}v= gv$, so $h^{-1}gh=g$ ($G$ acts freely). Conversely, if $h\in C_G(g)$, then $h,g$ have a common basis of eigenvectors, so $hv=\lambda v\in V\cap h(V)$.
\\
\indent (3) Let $h$ be the generator of $G_v$ from \eqref{Item 1 CGg}, so $g=h^k$ for some $k$. 
Thus $h$ and $g$ commute, so $h\in C_G(g)$.
%
Conversely, if $h\in C_G(g)$, then $h,g$ have a common basis of eigenvectors, and a subcollection will be a basis for $V$ consisting of eigenvectors of $h$. Thus $h\in G_v$ for any $v$ from this subcollection, and also $h(V)=V$ so $h\in G_V$. Finally \eqref{Item 3 CGg} implies
$G_V\subset C_G(g)$.
\\
\indent (4) Observe that $h\in G_V=C_G(g)$ fixes $[v]\in \P(V)$ precisely if $v$ is an eigenvector of $h$.
\end{proof}

\begin{corollary}\label{Corollary Bgell}
$
B=B_{{\bf g},\ell}=\pi_G(V\cap S^{2n-1})=(V\cap S^{2n-1})/G_{g,\ell}\subset S_G
$
 is a submanifold of real dimension $\dim B= 2\dim_{\C}\!V-1$.
\end{corollary}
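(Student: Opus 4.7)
The plan is to verify the corollary in three steps: identify the quotient group, show the quotient is smooth, and check that the induced map into $S_G$ realises it as a submanifold.

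First, I would note that $V = V_{g,\ell}$ is a complex linear subspace of $\C^n$, so $V \cap S^{2n-1}$ is a standard sphere of real dimension $2\dim_{\C}V - 1$, a submanifold of $S^{2n-1}$. Because $G \subset SU(n)$ preserves norms, the subgroup $G_V = \{h \in G : h(V) \subset V\}$ acts on $V \cap S^{2n-1}$; and by parts \eqref{Item 3 CGg} and \eqref{Item 4 CCg} of Lemma \ref{Lemma cyclic groups Gv} this subgroup coincides with $G_{g,\ell} = C_G(g)$.

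Second, I would verify the equality $\pi_G(V \cap S^{2n-1}) = (V \cap S^{2n-1})/G_{g,\ell}$ by showing that the natural map
\[
\phi \co (V \cap S^{2n-1})/G_{g,\ell} \longrightarrow \pi_G(V \cap S^{2n-1}) \subset S_G
\]
is a bijection. Injectivity is the only content: if $v_1, v_2 \in V \cap S^{2n-1}$ and $v_2 = h(v_1)$ for some $h \in G$, then $h(V) \cap V \ni v_2 \neq 0$, so $h \in C_G(g) = G_{g,\ell}$ by Lemma \ref{Lemma cyclic groups Gv}\eqref{Item 3 CGg}-\eqref{Item 4 CCg}; uniqueness of $h$ follows from freeness of the $G$-action on $\C^n\setminus 0$.

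Third, I would observe that $G_{g,\ell}$ acts freely on $V \cap S^{2n-1}$ (since $G$ acts freely on $\C^n\setminus 0$), and that the $G$-action on $S^{2n-1}$ is also free, so that $\pi_G\co S^{2n-1} \to S_G$ is a covering map, in particular a local diffeomorphism. Pick $v \in V \cap S^{2n-1}$ and choose a neighbourhood $U \subset S^{2n-1}$ of $v$ small enough that $\pi_G|_U$ is a diffeomorphism onto its image and so that $hU \cap U = \emptyset$ for all $h \in G \setminus G_v$. Then $\pi_G(V \cap S^{2n-1} \cap U) = \pi_G(V\cap S^{2n-1}) \cap \pi_G(U)$, and the left-hand side is the diffeomorphic image under $\pi_G|_U$ of the submanifold chart $V \cap S^{2n-1} \cap U \subset U$. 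This produces the required submanifold chart near $\pi_G(v)$, with dimension $\dim V\cap S^{2n-1} = 2\dim_{\C}V - 1$.

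The only step requiring any thought is the second, identifying the fibre of $\pi_G|_{V\cap S^{2n-1}}$ with a $G_{g,\ell}$-orbit rather than a possibly larger $G$-orbit; everything else is formal once one knows that $G_{g,\ell}$ equals the full stabiliser $G_V$ of $V$ as a set, which is exactly the content of Lemma \ref{Lemma cyclic groups Gv}.
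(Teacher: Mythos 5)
Your approach is essentially the same as the paper's: both identify $\pi_G(V\cap S^{2n-1})$ with $(V\cap S^{2n-1})/G_{g,\ell}$ by using Lemma~\ref{Lemma cyclic groups Gv} to see that any $h\in G$ carrying one point of $V\cap S^{2n-1}$ to another must already lie in $G_V=C_G(g)=G_{g,\ell}$, and both then use freeness of the $G$-action to get smoothness. You spell out the local chart step more explicitly than the paper's one-line ``transverse intersection plus free action.''

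One point in your third step is slightly off as written. The condition ``$hU\cap U=\emptyset$ for all $h\in G\setminus G_v$'' does \emph{not} by itself give the asserted equality $\pi_G(V\cap S^{2n-1}\cap U)=\pi_G(V\cap S^{2n-1})\cap\pi_G(U)$. The problematic case: $[w]$ in the right-hand side is represented by $u\in U$ and $u=h(v')$ with $v'\in V\cap S^{2n-1}$ and $h\notin G_V$; then $u\in h(V)\setminus V$ and $v'$ need not lie in $U$, so the condition on $hU\cap U$ gives no contradiction. What you actually need is that $U$ is disjoint from the sets $h(V)\cap S^{2n-1}$ for all $h\in G$ with $h(V)\neq V$. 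This is easy to arrange: by Lemma~\ref{Lemma cyclic groups Gv}\eqref{Item 3 CGg} those sets are disjoint from $V\cap S^{2n-1}$ (their only common point in $V$ is the origin), and there are finitely many of them, all compact, so a small enough $U$ misses them. With that extra shrinkage, and with the standard condition $hU\cap U=\emptyset$ for all $h\neq e$ (which is the right replacement for $h\notin G_v$, and suffices since the $G$-action on $S^{2n-1}$ is free), your chart argument goes through.
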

\begin{proof}
$\pi_G(V\cap S^{2n-1})=(GV/G)\cap S_G$ where $GV=\cup h(V)$ over all $h\in G$,  and $GV/G \cong V/G_{g,\ell}$ by Lemma \ref{Lemma cyclic groups Gv}. 
%
%
%
%
Finally, $V\cap S^{2n-1}$ is a transverse intersection and $G_{g,\ell}$ acts freely on it (as $G$ acts freely on $\C^n\setminus 0$).
\end{proof}

By Lemma \ref{Lemma primitive orbits of the S1 action}, $B$ is precisely the moduli space $$\mathcal{O}=\mathcal{O}_{{\bf g},\ell}$$ of parametrized closed Reeb orbits in $S_G$ of length $\ell$ associated to the class ${\bf g}\in \mathrm{Conj}(G)$ via \eqref{Equation pi1 of Y minus E}, as $p=[v]\in B$ determines the Reeb orbit $[0,\ell]\to S_G$, $\phi_t(p)=[e^{it}v]\in S_G$ with initial point $\phi_0(p)=p$. We often blur the distinction by identifying $B\equiv \mathcal{O}$. Note however that the subset $B_{{\bf g},\ell+2\pi k}\subset S_G$ does not depend on $k\in \N$, whilst $\mathcal{O}_{{\bf g},\ell+2\pi k}$ does, due to the length. 
The {\bf short Reeb orbits} are those of {\bf length} $\ell \in (0,2\pi]$, and they determine the {\bf age} in \eqref{Equation age}:
\begin{equation}\label{Equation relating Reeb lengths to age}
\mathrm{age}({\bf g})=\frac{1}{2\pi}\sum_{0<\ell\leq 2\pi} (2\pi - \ell)\dim_{\C} V_{{\bf g},\ell},
\end{equation}
in particular the sum of all these lengths counted with dimension-multiplicity is $2\pi(n-\mathrm{age}\,g)$. Recall from the Introduction, the {\bf minimal Reeb orbits} associated to $g$ are the short Reeb orbits occurring for the smallest value of $\ell$ (amongst eigenvalues of $g$). Also, recall from Appendix C the Definition \ref{Definition Morse-Bott submfd} of {\bf Morse-Bott submanifold}.

\begin{lemma} \label{Lemma dimension of B}
$B\subset S_G$ is a Morse-Bott submanifold of real dimension $\dim B=2 \dim_\C V - 1$.
\end{lemma}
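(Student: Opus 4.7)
The dimension claim is already contained in Corollary~\ref{Corollary Bgell}, so the task reduces to verifying the Morse-Bott condition from Appendix~C, Definition~\ref{Definition Morse-Bott submfd}. That requires (i) every point of $B$ to be the initial point of a closed Reeb orbit of one common length $\ell$, and (ii) for each such $p$ that the tangent space $T_pB$ coincide with the fixed subspace $\ker(d\phi^G_\ell|_p-\mathrm{Id})$ of the linearised time-$\ell$ Reeb flow on $S_G$. Condition (i) is immediate: for $p=[v]$ with $v\in V\cap S^{2n-1}$ the quotient Reeb flow \eqref{Equation Reeb flow for sphere} gives $\phi^G_\ell(p)=[e^{i\ell}v]=[g(v)]=[v]=p$.

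For (ii) I would compute $d\phi^G_\ell$ by lifting to $S^{2n-1}$, where $\phi_t$ is literally multiplication by $e^{it}$ and therefore $d\phi_\ell$ is multiplication by $e^{i\ell}$ on $T_vS^{2n-1}$. Since $G$ acts freely on $S^{2n-1}$, the projection $\pi_G$ is a local diffeomorphism, so one may choose a local section sending $p\mapsto v$. A nearby point $p'=[v']$ flows to $[e^{i\ell}v']$; to represent this back in the chosen section one must apply the deck transformation $g^{-1}$, which is $\C$-linear. Differentiating at $v$ then shows that $d\phi^G_\ell|_p$ is conjugate via $d\pi_G$ to the restriction of $w\mapsto g^{-1}(e^{i\ell}w)$ to $T_vS^{2n-1}$.

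The fixed subspace of this linear map is
\[
\{w\in T_vS^{2n-1}:g(w)=e^{i\ell}w\}=V\cap T_vS^{2n-1}=T_v(V\cap S^{2n-1}),
\]
which has real dimension $2\dim_\C V-1$: $V$ is a complex subspace containing $v$, and so it meets the real hyperplane $T_vS^{2n-1}=\{w:\mathrm{Re}\langle w,v\rangle=0\}$ in a real codimension-one subspace of $V$. Since $G_{g,\ell}$ acts freely on $V\cap S^{2n-1}$ (as $G$ does on $S^{2n-1}$), the quotient $V\cap S^{2n-1}\to B$ is a local diffeomorphism near $v$, so $d\pi_G$ carries the fixed subspace above isomorphically onto $T_pB$. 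This gives $T_pB=\ker(d\phi^G_\ell|_p-\mathrm{Id})$ uniformly in $p\in B$, completing the Morse-Bott check.

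Nothing here is a real obstacle, because the Reeb flow on $\C^n$ is literally multiplication by $e^{it}$ and the deck transformation is $\C$-linear, so the return map linearises to $w\mapsto g^{-1}(e^{i\ell}w)$ on the nose. The only point needing care is to track that the Reeb direction $iv$ lies in the complex subspace $V$, so that it is simultaneously tangent to $B$ and fixed by the linearised return map, as any Morse-Bott family of periodic orbits demands.
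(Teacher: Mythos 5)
Your argument is correct and follows essentially the same route as the paper: lift to $S^{2n-1}$, observe that the linearised return map on the lift is $w\mapsto g^{-1}(e^{i\ell}w)$ (the $e^{i\ell}$-multiplication composed with the deck transformation $g^{-1}$), and identify its fixed subspace with $V\cap T_vS^{2n-1}=T_v(V\cap S^{2n-1})$, which pushes down to $T_pB$. The one small slip is in your restatement of condition (2): Definition \ref{Definition Morse-Bott submfd} asks that the $1$-eigenspace of the return map \emph{restricted to} $\xi|_{\gamma(0)}$ equal $TB\cap\xi|_B$, whereas you compute the fixed locus of the full linearisation on $T_pS_G$. These are equivalent because the Reeb flow preserves the splitting $T S_G=\xi\oplus\R Y$ and $Y$ is automatically fixed and tangent to $B$ --- a point you in fact flag in your closing paragraph --- but it would be cleaner to intersect with $\xi$ at the end, as the paper does (their $E=V\cap(\C^{n-1}\times 0)$ is precisely your fixed space intersected with the contact hyperplane, of dimension $2\dim_\C V-2=\dim B-1$).
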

\begin{proof}
Condition (1) of Definition \ref{Definition Morse-Bott submfd} is a consequence of Lemma \ref{Lemma primitive orbits of the S1 action}, as explained above. To check condition (2), let $\gamma:[0,\ell]\to S_G$ be a closed Reeb orbit. Lift $\gamma$ to a path in $V$, $\widetilde{\gamma}:[0,\ell]\to S^{2n-1}$, $\widetilde{\gamma}(t)=\phi_t(v)$ with $g(v)=e^{i\ell}v$.
After an $SU(n)$-change of coordinates, we may assume $\widetilde{\gamma}(0)=(0,\ldots,0,1)$, so $\widetilde{\gamma}(t)=(0,\ldots,0,e^{it})$.
Fix the obvious trivialisation $\gamma(0)^*\xi_G\cong \widetilde{\gamma}(0)^*\xi
\cong \C^{n-1} \times 0\subset \C^n$,
%
%
 then this lifts to a trivialisation of $\widetilde{\gamma}(\ell)^*\xi=(g \widetilde{\gamma}(0))^*\xi$ by $g (\C^{n-1}\times 0)$. So the linearised return map in this trivialisation is $z\mapsto (g^{-1} \circ D\phi_{\ell})(z)= g^{-1} (e^{i \ell } z)$ for $z\in \C^{n-1}\times 0\subset \C^n$,
whose $1$-eigenspace is $E=V \cap (\C^{n-1} \times 0)$.
This is a transverse intersection, as $\C\cdot \widetilde{\gamma}(0)=0\times \C\subset V$, thus $\dim 
E= 2 \dim_\C V - 2=\dim B - 1$, moreover the intersection equals $TB\cap \xi|_B$.
%
%
\end{proof}

Each Reeb orbit $\gamma:\R/\ell \Z\to S_G$ defines iterates $\gamma^k: \R / k\ell\Z \to S_G$, $\gamma^k(t) = \gamma(t)$, for $k\in \N$, and the {\bf multiplicity} of $\gamma$ is the largest $k$ with $\gamma = \eta^k$ for some closed Reeb orbit $\eta$.

\begin{lemma}\label{Lemma multiplicity of Reeb orbit}
 The multiplicity of the Reeb orbit of length $\ell$ corresponding to $p\in B$ equals
\begin{equation}\label{Equation multiplicity calculation}
m_p=\max_{b\in (0,\infty)}  \left\{ \tfrac{\ell}{b}  :  h(v) = e^{ib}v  \textrm{ for some } h \in G \textrm{ and }  v\in \pi_G^{-1}(p) \subset S^{2n-1}\right\}=\ell\,|G_p|/2\pi,
\end{equation}
so it may depend on the orbit in $B$.
For short Reeb orbits $(0\!<\!\ell\!\leq\! 2\pi)$, $m_p\leq |G_p|\leq |G|$ and it is determined by $g=h^{m_p}$ where $h$ is the generator of $G_v$ and $p=[v]$.
\end{lemma}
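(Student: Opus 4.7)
The plan is to translate the multiplicity computation into a question about return times of the Reeb flow on $S_G$, which can then be read off from the cyclic structure of $G_v$ established in Lemma \ref{Lemma cyclic groups Gv}(1). Concretely, using \eqref{Equation Reeb flow for sphere}, the flow starting at $p=[v]\in S_G$ returns to $p$ at time $b>0$ exactly when $e^{ib}v=h(v)$ for some $h\in G$; equivalently, when $e^{ib}$ lies in the cyclic subgroup $\Lambda_v=\{\lambda\in S^1:h(v)=\lambda v\text{ for some }h\in G\}\subset S^1$. By Lemma \ref{Lemma cyclic groups Gv}(1), $|\Lambda_v|=|G_v|$, so $\Lambda_v=\langle e^{2\pi i/|G_v|}\rangle$ and the set of return times is $\tfrac{2\pi}{|G_v|}\,\Z_{>0}$. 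This step is essentially a direct unpacking of definitions.

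Next I would identify $m_p$ with the largest $k\in\Z_{>0}$ such that $\ell/k$ is a return time: by definition $\gamma=\eta^k$ for a Reeb orbit $\eta$ of length $\ell/k$ starting at $p$, and any such iterate is automatically of the form $t\mapsto[e^{it}v]$ and hence agrees with $\gamma$. This is precisely the maximum appearing in \eqref{Equation multiplicity calculation}, attained at the smallest admissible $b$, namely $b=2\pi/|G_v|$. The hypothesis $g(v)=e^{i\ell}v$ forces $e^{i\ell}\in\Lambda_v$, so $M:=\ell|G_v|/2\pi$ is a positive integer; the divisibility condition $k\mid M$ then gives $m_p=M=\ell\,|G_p|/2\pi$, proving the second equality. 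The bound $m_p\leq|G_p|\leq|G|$ for short Reeb orbits $(0<\ell\leq 2\pi)$ is then immediate.

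For the final assertion, let $h\in G_v$ be the generator with $h(v)=e^{2\pi i/|G_v|}v$. Then
\[
h^{m_p}(v)=e^{2\pi i\, m_p/|G_v|}\,v=e^{i\ell}v=g(v),
\]
and since $G$ acts freely on $\C^n\setminus 0$ (Lemma \ref{Lemma G acts freely}), this forces $g=h^{m_p}$. I do not anticipate a serious obstacle; the only bookkeeping point is checking that the formula is independent of the choice of representative $v\in\pi_G^{-1}(p)$, which follows from $G_{h(v)}=hG_vh^{-1}$, so that $|G_p|:=|G_v|$ is a well-defined invariant of $p$.
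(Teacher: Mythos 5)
Your proof is correct and follows essentially the same route as the paper's: translate the multiplicity into the minimal return time of the Reeb flow at $p$, read that off from the cyclic structure of $G_v$ via Lemma~\ref{Lemma cyclic groups Gv}(1), and deduce $g=h^{m_p}$ from freeness of the $G$-action. You spell out a couple of points the paper leaves tacit — that $\ell|G_v|/2\pi$ is a positive integer because $e^{i\ell}\in\Lambda_v$, and that $|G_p|$ is independent of the representative $v$ via $G_{h(v)}=hG_vh^{-1}$ — but these are routine checks rather than a different argument.
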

\begin{proof}
The first equality in \eqref{Equation multiplicity calculation} is immediate, as the achieved maximum yields a minimal $b$ for which $[e^{ib}v]=[v]\in S_G$. The second equality follows by Lemma \ref{Lemma cyclic groups Gv}\eqref{Item 1 CGg}, since the generator $h$ of $G_v$ will achieve the maximum in \eqref{Equation multiplicity calculation} and satisfies $h(v)=e^{i 2 \pi /|G_v|}v$, so $b=2\pi/|G_v|$. 
%
%
%
\end{proof}

\subsection{The associated $S^1$-action on the Morse-Bott submanifolds of Reeb orbits}

There are two circle actions on $B=\mathcal{O}$: the $S^1$-action that $B$ inherits as a subset $B\subset \C^n/G$; and the {\bf associated $S^1$-action} of Definition \ref{Definition Morse-Bott submfd}: the circle $S^1_{\ell}=\R/\ell \Z$ acts on $\mathcal{O}$ by time-translation $\gamma\mapsto \gamma(\cdot + c)$ for $c\in S^1_{\ell}$. The orbits of both actions agree geometrically as images in $\C^n/G$\fix{, yielding the same circle $C\subset \C^n/G$, but the degrees of the quotient maps $S^1 \to C$, $t\mapsto [e^{it}v]$ and $S^1_{\ell}\to C$, $t\mapsto \gamma(t)$ can differ. Indeed the degrees are respectively $|G_p|$ and $m_p=\ell\,|G_p|/2\pi$ (by \eqref{Equation multiplicity calculation}), so the two actions coincide only for $\ell = 2\pi$.}

\begin{theorem}\label{Theorem equiv coh of Morse Bott submanifolds} 
One can identify $B/S_{\ell}^1$ with the quotient $\P V/G_V$. The fiber of $\P V \to \P V/G_V$ over $p=[v]$ has size $|C_G(g)|/|G_v|$ which divides $|G|$.
Via $\P V \cong \C\P^{\,\dim_\C V-1}$, we get
$$
 H^*(B/S^1_{\ell})\cong H^*(\C\P^{\,\dim_\C V-1})
$$ 
over any field of characteristic not dividing $|G|$.

The equivariant cohomology of $B=\mathcal{O}$ for the $S^1_{\ell}$-action, in the sense of Appendix B, is
\begin{equation}\label{Equation EH of curly O}
EH^*(\mathcal{O}) \cong H^{*-1}(B/S^1_{\ell})
\end{equation}
in characteristic zero, and for $\ell\leq 2\pi$ also over fields of characteristic coprime to $2,3,\ldots,|G|$.   
\end{theorem}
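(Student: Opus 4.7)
My plan is to first establish the topological identification $B/S^1_\ell \cong \mathbb{P}V/G_V$ by an iterated-quotient argument, then upgrade to the cohomology statement by a transfer argument, and finally feed the quotient into the paper's equivariant-cohomology theorem for $S^1$-actions with finite stabilisers.

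For the first step, I would start from Corollary \ref{Corollary Bgell} and Lemma \ref{Lemma cyclic groups Gv}(4), which give $B = (V\cap S^{2n-1})/G_{g,\ell}$ with $G_{g,\ell}=G_V=C_G(g)$. The ambient circle action by $v\mapsto e^{it}v$ commutes with $C_G(g)$ on $V\cap S^{2n-1}$, and, as noted in the paragraph introducing the associated $S^1$-action, its orbits on $B$ coincide set-theoretically with the $S^1_\ell$-orbits (the two actions differ only by multiplicity, not by the geometric image of orbits). Quotienting in either order therefore yields
$$B/S^1_\ell \;\cong\; ((V\cap S^{2n-1})/S^1)/C_G(g) \;\cong\; \mathbb{P}V/G_V.$$
The fibre of $\mathbb{P}V\to \mathbb{P}V/G_V$ over $[v]$ is the $G_V$-orbit, of size $|C_G(g)|/|G_v|$ by Lemma \ref{Lemma cyclic groups Gv}(5); Lagrange's theorem then shows this ratio divides $|G|$.

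For the cohomology computation, I would use the standard fact that when the order of a finite group action is invertible in $\mathcal{K}$, the transfer gives $H^*(X/H;\mathcal{K})\cong H^*(X;\mathcal{K})^H$. Applied to the $G_V$-action on $\mathbb{P}V\cong \mathbb{C}\mathbb{P}^{\dim_{\mathbb{C}}V-1}$, I need to verify that this action is trivial on cohomology: the representation $G_V\subset C_G(g)\to \mathrm{Aut}(\mathbb{P}V)$ lands inside the connected group $PGL(V)$, so the induced map on $H^*(\mathbb{P}V;\mathcal{K})$ is homotopic to the identity. Hence $H^*(\mathbb{P}V/G_V;\mathcal{K})\cong H^*(\mathbb{C}\mathbb{P}^{\dim_{\mathbb{C}}V-1};\mathcal{K})$ whenever $\mathrm{char}\,\mathcal{K}$ does not divide $|G_V|$, in particular whenever it does not divide $|G|$.

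For the final equivariant statement, I would apply Theorem \ref{Theorem EH for free action} to the $S^1_\ell$-action on $\mathcal{O}=B$. By Lemma \ref{Lemma multiplicity of Reeb orbit}, the isotropy of this action at a point $p\in B$ is cyclic of order equal to the multiplicity $m_p=\ell|G_p|/2\pi$, hence finite; in characteristic zero this immediately gives $EH^*(\mathcal{O})\cong H^{*-1}(B/S^1_\ell)$. For $\ell\leq 2\pi$, the same lemma furnishes $m_p\leq |G_p|\leq |G|$, so all stabiliser orders are invertible whenever $\mathrm{char}\,\mathcal{K}$ is coprime to $2,3,\ldots,|G|$, and the theorem applies verbatim. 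The main obstacle I expect is keeping the two distinct uses of the characteristic hypothesis straight: one invocation of invertibility is needed in the transfer step for $|G_V|\mid|G|$, and a separate one in the equivariant step for the multiplicities $m_p$, and it is precisely the bound $\ell\leq 2\pi$ that simultaneously controls the latter by $|G|$.
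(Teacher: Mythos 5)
Your proof is correct and follows essentially the same outline as the paper's: iterated quotient to identify $B/S^1_\ell$ with $\P V/G_V$, the transfer isomorphism to reduce to $G_V$-invariants, a step showing the $G_V$-action is trivial on cohomology, and finally an appeal to Theorem \ref{Theorem EH for free action} using the multiplicity bound from Lemma \ref{Lemma multiplicity of Reeb orbit}. The one place where you take a genuinely different route is the argument that $H^*(\P V)^{G_V}=H^*(\P V)$: you observe that $G_V$ acts through the path-connected group $PGL(V)$, so every group element is homotopic to the identity and hence acts trivially on cohomology over any coefficient field. The paper instead shows that the pullback $p^*: H^*(\P V/G_V)\to H^*(\P V)$ is surjective because $p^*\omega_G=\omega$ is the Fubini--Study form, which generates the ring $H^*(\P V)$, and combines this with the injectivity coming from the transfer. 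Your version is slightly more elementary and manifestly coefficient-independent (it does not invoke a differential form or the ring structure), while the paper's version fits naturally with the symplectic data already in play; both are perfectly valid.
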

\begin{proof}
$B/S_{\ell}^1 = (GV\cap S^{2n-1})/(S^1\cdot G)=\P(GV)/G = \P V/G_V$,
%
%
%
%
where the last equality uses Lemma \ref{Lemma cyclic groups Gv} \eqref{Item 3 CGg},\eqref{Item 4 CCg}. The statement about the fiber follows by Lemma \ref{Lemma cyclic groups Gv} \eqref{Item 5 CCg}. Remark \ref{Remark cohomology of finite quotients} implies 
$H^*(\P V/G_V) \cong H^*(\P V)^{G_V}$ by pulling back via the projection $p: \P V \to \P V/G_V$.
But $H^*(\P V)^{G_V}=H^*(\P V)$, indeed $p^*$ is surjective since $p^*\omega_G=\omega$ is the Fubini-Study form on $\P V\subset \C\P^{n-1}$ that generates the ring $H^*(\P V) \cong H^*(\C\P^{\,\dim_\C V-1})$. 
%
%
%
Theorem \ref{Theorem EH for free action} implies \eqref{Equation EH of curly O} (using that the sizes of the stabilisers are $m_p\leq |G|$ when $\ell \leq 2\pi$, by Lemma \ref{Lemma multiplicity of Reeb orbit}).
\end{proof}

\subsection{Conley-Zehnder indices of Reeb orbits in the quotient}

Appendix C is a survey of Conley-Zehnder indices and their properties (CZ1)-(CZ4). For $n\geq 2$, let $\kappa$ be the canonical bundle on $S^{2n-1}$ induced by the standard complex structure $J$ inherited from the inclusion $S^{2n-1}\subset \C^n$. 
Let $E=\sum z_j \partial z_j$ denote the {\bf Euler vector field}, and let $E^*=\sum \overline{z}_j\, dz_j$.
%
%
Then, as complex vector bundles, $T_{\C}\C^n$ and $T_{\C}^*\C^n$ pulled back to $S^{2n-1}$ split as $\xi\oplus \C E$ and dually $\xi^*\oplus \C E^*$ (in particular, the contraction $\iota_E \xi^*=0$), where $\xi$ was defined in Sec.\ref{Subsection symplectic data for the quotient}. 
%
%
It follows that $\Lambda^{n-1}_{\C}(\C^n)^*$ pulled back to $S^{2n-1}$ has a one-dimensional summand determined by the forms which vanish when contracted with $E$. Thus $\kappa$ can be trivialized by the section
\begin{equation} \label{section of canonical bundle}
K = \iota_{E} (dz_1 \wedge \cdots \wedge dz_n).
\end{equation}
%
%
%
%
%
This ensures that a \fix{compatible trivialisation of $\xi$} together with the field\footnote{where $Z,Y$ are the vector fields defined in Sec.\ref{Subsection convex symplectic manifolds}.} $Z+iY$ from Remark \ref{Remark from CZReeb to CZHam} yields, up to homotopy, the standard trivialisation of the anti-canonical bundle $\mathcal{K}_{\C^n}^*$ of $\C^n$. Conversely, observe that the standard trivialisation of $\mathcal{K}_{\C^n}$ by $dz_1\wedge \cdots \wedge dz_n$ is induced by any complex frame for $T\C^n$ arising as the image under $SL(n,\C)$ of the standard frame $\partial_{z_1},\ldots,\partial_{z_n}$ (since $\det = 1$). Restricting to $S^{2n-1}\subset \C^n$, if the first vector field of that frame is $E$ then the remaining vector fields of the frame induce the section \eqref{section of canonical bundle} on $\kappa$. 
%
%

Recall from Sec.\ref{Subsection symplectic data for the quotient}, $G\subset SU(n)$ is a finite subgroup acting freely on $\C^n\setminus 0$. Observe that $SU(n)$ preserves the field $E$, the canonical bundle $\mathcal{K}_{\C^n}$,
%
%
and more generally the above splittings; so it preserves $K$.
%
%
%
%
%
So via the quotient $\pi_G: S^{2n-1}\to S_G=S^{2n-1}/G$, we obtain induced data $\alpha_G,\xi_G,J_G,\kappa_G,K_G$ from the analogous data $\alpha,\xi,J,\kappa,K$ defined on $S^{2n-1}\subset \C^n$.

\begin{theorem}\label{Theorem CZ indices for Bott mfds}
For $0< \ell \leq 2\pi$, the Conley-Zehnder index of $B=B_{{\bf g},\ell}$ is 
$$
CZ(B)=n-2\,\mathrm{age}({\bf g}) + 2 \sum_{\ell'<\ell} \dim_{\C} V_{{\bf g},\ell'} + \tfrac{1}{2}\dim B + \tfrac{1}{2},
$$
and the associated grading \eqref{Equation Morse-Bott shift} is
\begin{equation}
\label{Equation mu B of Morse Bott submfds}
\mu(B)=2\,\mathrm{age}({\bf g}) - 2 \sum_{\ell'<\ell} \dim_{\C} V_{{\bf g},\ell'} - \dim B - 1.
\end{equation}
%
%
%
In general, $\mathrm{CZ}(B_{{\bf g},\ell+2\pi k}) = \mathrm{CZ}(B_{{\bf g},\ell}) + 2kn$, and thus $\mu(B_{{\bf g},\ell+2\pi k}) = \mu(B_{{\bf g},\ell}) - 2kn$.

If $e^{i\ell}$ is the minimal eigenvalue of ${\bf g}$ then $\mu(B)= 2\,\mathrm{age}({\bf g})-\dim B -1$, so the maximum of $B$ (in the sense of Remark \ref{Remark from CZReeb to CZHam}) is a minimal Reeb orbit in grading $\mu= 2\,\mathrm{age}({\bf g})-1$.
\end{theorem}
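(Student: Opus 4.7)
The plan is to reduce the calculation to a diagonal one on $\C^n$ by choosing a distinguished representative of $B$ and an adapted complex frame, then to apply additivity of Conley--Zehnder (CZ4) and the Robbin--Salamon formula on each complex line. First, after an inner $G$-conjugation, pick a unitary basis $e_1,\ldots,e_n$ of $\C^n$ in which $g=\operatorname{diag}(e^{ia_1},\ldots,e^{ia_n})$ and the chosen eigenvector $v\in V_{g,\ell}\cap S^{2n-1}$ equals $e_n$, so $a_n=\ell$. The lift of the Reeb orbit is $\widetilde{\gamma}(t)=e^{it}e_n$, and the linearised flow on $\widetilde{\gamma}^*\xi$ splits into $n-1$ invariant complex-line summands spanned by $e_1,\ldots,e_{n-1}$, on each of which the flow acts by $e^{it}$.

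Second, construct a trivialisation of $\gamma^*\xi_G$ that is canonical with respect to the section $K_G$ of \eqref{section of canonical bundle}. Start from the interpolating frame $\widetilde{v}_j(t)=e^{ia_jt/\ell}e_j$, which closes up in the quotient since $\widetilde{v}_j(\ell)=g_*\widetilde{v}_j(0)$. A direct computation shows that $K$, expressed in the dual of this frame, has coefficient winding by $e^{i\sum_{j=1}^n a_j}$ over one period. Because the age is defined by the \emph{dual} $G$-action, one has $\sum_j a_j=2\pi(n-\mathrm{age}(g))$, so the winding is integral and can be absorbed by multiplying a single coordinate by the compensating phase $e^{-2\pi i\,\mathrm{age}(g)\,t/\ell}$; the integrality $\mathrm{age}(g)\in\Z$ preserves the closure condition.

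Third, read off $CZ(B)$. In the canonical frame the symplectic path is diagonal with rotation $\theta_j=\ell-a_j$ on each line for $j\neq 1$, modified to $\theta_1=\ell-a_1-2\pi\,\mathrm{age}(g)$ on the corrected line. By (CZ4) and the Robbin--Salamon formula (contribution $2\lfloor\theta/(2\pi)\rfloor+1$ at a non-degenerate endpoint and $2k$ at a degenerate rotation $\theta=2\pi k$), $CZ(B)$ is a sum indexed by $j=1,\ldots,n-1$. Partitioning these indices by whether $a_j<\ell$, $=\ell$, or $>\ell$ (with cardinalities $A$, $\dim_\C V_{{\bf g},\ell}-1$, and $n-1-A-(\dim_\C V_{{\bf g},\ell}-1)$), collapsing the sum via $\sum a_j=2\pi(n-\mathrm{age}(g))$, and incorporating the Morse--Bott shift from Appendix~C together with the Reeb-to-Hamiltonian comparison of Remark~\ref{Remark from CZReeb to CZHam}, recovers $CZ(B)=n-2\,\mathrm{age}(g)+2A+\tfrac{1}{2}\dim B+\tfrac{1}{2}$; the formula for $\mu(B)$ then follows from \eqref{Equation Morse-Bott shift}.

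For the iterate $\ell\mapsto\ell+2\pi k$ every rotation angle $\theta_j$ increases by $2\pi k$, so each of the $n$ complex lines (including the Reeb direction, which enters through the Hamiltonian-CZ conversion) contributes an extra $2k$ to the Robbin--Salamon sum, totalling $+2kn$ for $CZ$ and hence $-2kn$ for $\mu$. The minimal-eigenvalue specialisation is immediate: if $e^{i\ell}$ is minimal then $A=0$, so $\mu(B)=2\,\mathrm{age}(g)-\dim B-1$, and the top-degree Morse--Bott generator of $H^*(B)$ lives at $\mu(B)+\dim B=2\,\mathrm{age}(g)-1$, giving the last assertion. The main obstacle is the sign bookkeeping in step three: pinning down the direction of the age correction in the canonical frame (which depends on the dual-$G$-action convention for $\mathrm{age}$) and the conversion between Reeb and Hamiltonian CZ normalisations, so that the constant $n$ of the statement arises rather than a variant differing by $\dim B$ or $2\,\mathrm{age}(g)$ that a less careful calculation is prone to produce.
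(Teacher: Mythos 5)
Your overall approach matches the paper's: diagonalise $g$ in a unitary eigenbasis, build a $G$-compatible frame from the interpolating path $g_t$, correct it by a diagonal phase so it is compatible with the standard trivialisation of $\mathcal{K}_{\C^n}$ (equivalently so that $K$ of \eqref{section of canonical bundle} has constant coefficient), and read off $CZ(B)$ by (CZ2) and (CZ4). However, the crucial correction factor in your step two is wrong, and this is precisely the sign-bookkeeping point you flag as delicate.

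You correctly compute that $K$ in the raw frame $\widetilde v_j(t)=e^{ia_j t/\ell}e_j$ winds by $e^{i\sum a_j}=e^{2\pi i(n-\mathrm{age}(g))}$ over $[0,\ell]$. To cancel this winding, one must multiply a single frame vector by $e^{-i\sum_j a_j\, t/\ell}=e^{-2\pi i(n-\mathrm{age}(g))t/\ell}$, not by $e^{-2\pi i\,\mathrm{age}(g)t/\ell}$ as you wrote. (The paper's $a_t$ has non-unit entry $e^{2\pi i(\mathrm{age}(g)-n)t/\ell}$, which is the same thing.) Both phases close up since $n,\mathrm{age}(g)\in\Z$, but only the former makes $K$ constant. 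Consequently the corrected rotation should be $\theta_1=\ell-a_1+2\pi(n-\mathrm{age}(g))$, not $\ell-a_1-2\pi\,\mathrm{age}(g)$. With your stated $\theta_1$, $W(\theta_1)=W(\ell-a_1)-2\,\mathrm{age}(g)$ and the total becomes $CZ(B)=-2\,\mathrm{age}(g)+\sum_{\ell'<\ell}\dim_{\C}V_{{\bf g},\ell'}-\sum_{\ell'>\ell}\dim_{\C}V_{{\bf g},\ell'}$, which is $2n$ smaller than the statement; so the intermediate step as written does not, in fact, ``recover'' the claimed formula. (There is also an internal sign slip: multiplying a frame vector by $e^{i\psi(t)}$ makes the flow pick up $e^{-i\psi(t)}$, so even your own phase would yield $\theta_1=\ell-a_1+2\pi\,\mathrm{age}(g)$, still off by $2(n-2\,\mathrm{age}(g))$.)

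The iterate claim is also mis-attributed. Passing from $\ell$ to $\ell+2\pi k$ adds $2\pi k$ to each of the $n-1$ rotations in the contact plane, giving $2k(n-1)$; the remaining $2k$ comes from the compensating phase, whose total winding increases by an extra $2\pi k$ (since $K$ now winds by $e^{i(\ell+2\pi k)}\cdot e^{i\sum_{j<n}a_j(t/L)}$ with $L=\ell+2\pi k$). It does not come from ``the Reeb direction via the Hamiltonian-CZ conversion'': $\mathrm{CZ}(B)$ is the Reeb index on the $(n-1)$-dimensional contact plane only, and the Reeb-to-Hamiltonian shift of Remark \ref{Remark from CZReeb to CZHam} contributes a constant $\pm\tfrac12$, not $2k$. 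The paper sidesteps this bookkeeping by writing the long orbit as a concatenation of the length-$\ell$ orbit with the full-period loop $\gamma_2$ and using $\phi_t^*K=e^{int}K$ together with (CZ1) to get $\mathrm{CZ}(\gamma_2)=2kn$, which is a cleaner route to the iterate formula.

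Everything else (the partition of the indices by $a_j<\ell$, $=\ell$, $>\ell$; the use of $\dim_{\C}V=\tfrac12\dim B+\tfrac12$; the minimal-eigenvalue specialisation) is sound once the compensating phase is fixed.
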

\begin{proof}
We first show that the middle claim follows from the first. Note that $\phi_t^*K=e^{int}K$. As $\phi_{2k\pi}=\mathrm{Id}_{S_G}$, an orbit $\gamma$ in $B_{{\bf g},\ell+2\pi k}$ can be viewed as a concatenation of an orbit $\gamma_1\in B_{{\bf g},\ell}$ together with the orbit $\gamma_2:\R/2\pi k \Z \to S_G$, $t\mapsto e^{it} \gamma_1(\ell)$. Thus $\mathrm{CZ}(\gamma)=\mathrm{CZ}(\gamma_1)+\mathrm{CZ}(\gamma_2)$ by property \ref{CZ Index of Catenation is sum of CZ indices}. By properties \ref{CZ of a direct sum} and \ref{CZ of a path in unitary line} one deduces $\mathrm{CZ}(\gamma_2)=2kn$.
%
%

We now prove the first claim. Abbreviate $d=\dim_{\C} V$.
By an $SU(n)$-change of coordinates to a basis of unitary eigenvectors for $g$, we may assume the Reeb orbit is $\gamma(t)=\pi_G(\widetilde{\gamma}(t))$ for
$\widetilde{\gamma}(t)=(e^{it},0,\ldots,0)$, with $t\in [0,\ell]$, and that the first $d$ standard basis vectors are eigenvectors of $g$ with eigenvalue $e^{i\ell}$ (i.e. a basis for $V=V_{g,\ell}$). Thus
$$
g = \mathrm{diag}(e^{i\ell},\ldots,e^{i\ell}, e^{i\ell'},\ldots)
$$
with $e^{i\ell}$ in the first $d$ diagonal entries, and the other eigenvalues $e^{i\ell'},\ldots$ of $g$ in the remaining diagonal entries, where $0<\ell'\leq 2\pi$. This basis splits $T_{\gamma(t)}\C^n = \C^n = \C \oplus \xi$ where $\C=\C\times 0$, $\xi=0\times \C^{n-1}$.
\fix{Picking a trivialisation of} $\gamma^*\xi_G$ is equivalent to picking a {\bf $G$-compatible trivialisation} of $\widetilde{\gamma}^*\xi$, meaning that if $h(\widetilde{\gamma}(t))=\widetilde{\gamma}(s)$ then the trivialisations of $\widetilde{\gamma}(t)^*\xi$ and $\widetilde{\gamma}(s)^*\xi$ are related by multiplication by $h$. By Lemma \ref{Lemma cyclic groups Gv}, it suffices to ensure compatibility for $h\in G$, with $h^k=g$ (and $k\in \N$ maximal such).
Because we may assume that the above eigenbasis for $g$ arose from an eigenbasis for $h$, the compatibility with $h$ will in fact also be guaranteed by our construction. We define two auxiliary families in $SL(n,\C)$,
\begin{align*}
g_t = \mathrm{diag}(e^{i\ell \frac{t}{\ell}},\ldots,e^{i\ell \frac{t}{\ell}}, e^{i\ell' \frac{t}{\ell}},\ldots) = \mathrm{diag}(e^{it},\ldots,e^{it}, e^{i\ell' \frac{t}{\ell}},\ldots)
\\
a_t = \mathrm{diag}(1,\ldots,1,e^{2\pi i(\mathrm{age}(g)-n)\frac{t}{\ell}},1,\ldots,1)
\end{align*}
for $t\in [0,\ell]$, where for $a_t$ the non-unit entry can be placed in any position except the first (to ensure that the first section $E=(a_t\cdot g_t)(1,0,\ldots,0)$ is the Euler vector field along $\widetilde{\gamma}$). The image under $a_t \cdot g_t$ of the eigenbasis defined at $t=0$ gives a trivialisation of the canonical bundle of $\C^n$ along $\widetilde{\gamma}$, which is compatible with the standard trivialisation as $\det (a_t\cdot g_t)=1$, using \eqref{Equation relating Reeb lengths to age}. It is also $G$-compatible since at $t=\ell$, we have $a_{\ell}\cdot g_{\ell}=g$ (using that $\mathrm{age}(g)\in \N$).
Omitting the first section (the Euler vector field) yields a compatible trivialisation of $\widetilde{\gamma}^*\xi$.
The (linearised) flow $\phi_t=\mathrm{diag}(e^{it},\ldots,e^{it})$ in this trivialisation becomes $(a_t \cdot g_t)^{-1}\cdot\phi_t$, so
$$
a_t^{-1}\cdot \mathrm{diag}(1, \ldots,1,e^{i(\ell-\ell')\frac{t}{\ell}},\ldots).
$$
As the lengths $\ell,\ell'\in (0,2\pi]$, all differences satisfy $|\ell-\ell'|<2\pi$, so the function \eqref{Equation W function for CZ} satisfies $W(\ell-\ell')=+1$ if $\ell'<\ell$, and $-1$ if $\ell'>\ell$. Thus properties \ref{CZ of a direct sum} and \ref{CZ of a path in unitary line} imply
$$
\mathrm{CZ}(B) = 2(n-\mathrm{age}(g)) + \sum_{\ell'<\ell} \dim_{\C} V_{{\bf g},\ell'} -
\sum_{\ell'>\ell} \dim_{\C} V_{{\bf g},\ell'}.
$$
By replacing one copy of $n$ by $n=\dim_{\C} V + \sum_{\ell'\neq \ell} \dim_{\C} V_{{\bf g},\ell'}$, and using $\dim_{\C}V = \frac{1}{2}\dim B + \frac{1}{2}$ from Lemma \ref{Lemma dimension of B}, the claim follows.
\end{proof}

\subsection{Convex symplectic manifold structure for resolutions of isolated singularities}
\label{Subsection Convex symplectic manifold structure for resolutions of isolated singularities}

Continuing with the notation from Sec.\ref{Subsection symplectic data for the quotient}, let $\pi: Y \to X=\C^n/G$ be any resolution, and let $B_\epsilon=\{z: |z|\leq \epsilon \}\subset \C^n/G$ for $0<\epsilon\ll 1$.

\begin{lemma} \label{lemma convex symplectic manifold on resolution}
There is a K\"{a}hler form $\omega_Y$ on $Y$ such that $(Y,\omega_Y)$ is convex symplectic and its data $\omega,\theta,Z,R$ on $Y\setminus \pi^{-1}(B_{\epsilon})$ agrees via $\pi$ with the data $\omega_G$, $\theta_G$, $Z_G$, $R_G$ on $(\C^n\setminus B_{\epsilon})/G$.
\end{lemma}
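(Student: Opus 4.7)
The plan is to build $\omega_Y$ by patching the pullback form $\pi^*\omega_G$ on the end with a quasi-projective K\"ahler form $\omega_0$ near the exceptional locus $E=\pi^{-1}(0)$, using the $\partial\bar\partial$-lemma on the Stein manifold $Y\setminus E$. The convex symplectic structure on the end is then inherited automatically, since $\omega_Y$ coincides there with $\pi^*\omega_G$ and $\pi$ is a biholomorphism away from $E$.

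First I set up the two models. Because $G\subset SU(n)$ acts by isometries and preserves $R=|z|^2$, the standard data $\omega,\theta,Z,R$ on $\C^n$ descend to $\omega_G,\theta_G,Z_G,R_G$ on $X\setminus 0$. The function $R_G$, being the descent of the smooth $G$-invariant polynomial $|z|^2$, has smooth pullback $\pi^*R_G$ on all of $Y$, and $\pi^*\omega_G=\tfrac{i}{2}\partial\bar\partial(\pi^*R_G)$ thus extends as a smooth closed semi-positive $(1,1)$-form on $Y$, strictly positive off $E$. Since $Y$ is quasi-projective it also carries a genuine K\"ahler form $\omega_0$.

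The cohomological input is that $Y\setminus E\cong X\setminus 0=(\C^n\setminus 0)/G$ deformation retracts onto $S^{2n-1}/G$, and since $G$ is finite and $n\geq 2$ one has $H^k(S^{2n-1}/G;\R)\cong H^k(BG;\R)=0$ for $0<k\leq 2n-2$. Moreover $X\setminus 0$ is quasi-affine hence Stein, so $Y\setminus E$ is Stein and the $\partial\bar\partial$-lemma yields a smooth real function $f$ on $Y\setminus E$ with $\omega_0-\pi^*\omega_G=\tfrac{i}{2}\partial\bar\partial f$. Now fix $0<\delta<\epsilon$ and a smooth cutoff $\chi=\rho(\pi^*R_G)\colon Y\to[0,1]$ with $\chi\equiv 0$ on $\pi^{-1}(B_\delta)$ and $\chi\equiv 1$ outside $\pi^{-1}(B_\epsilon)$, where $\rho$ is convex and non-decreasing. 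Since $\chi$ vanishes in a neighborhood of $E$, the product $\chi f$ extends smoothly by zero across $E$, and I set
\[
\omega_Y=\omega_0+\tfrac{i}{2}\partial\bar\partial(\chi f).
\]
Outside $\pi^{-1}(B_\epsilon)$ one computes $\omega_Y=\omega_0+\tfrac{i}{2}\partial\bar\partial f=\pi^*\omega_G$, giving the required data agreement with $\theta_Y=\pi^*\theta_G$, $Z_Y=\pi^*Z_G$, $R_Y=\pi^*R_G$; on $\pi^{-1}(B_\delta)$ one has $\omega_Y=\omega_0$, which is K\"ahler.

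The main obstacle is verifying K\"ahlerness on the transition annulus $A=\pi^{-1}(B_\epsilon\setminus\overline{B_\delta})$, where the expansion
\[
\omega_Y=(1-\chi)\omega_0+\chi\pi^*\omega_G+\tfrac{i}{2}\bigl(\partial\chi\wedge\bar\partial f+\partial f\wedge\bar\partial\chi+f\,\partial\bar\partial\chi\bigr)
\]
has a convex combination of two K\"ahler forms as its main term (positive and bounded below on $\overline{A}\subset Y\setminus E$), but cross terms that are a priori destructive. To dominate them I would spread $\rho$ out so that $\rho'$ is uniformly small (making $d\chi$ small) and shift $f$ by a constant so that $f\geq 0$ on $A$; the convexity of $\rho$ contributes the additional non-negative term $\rho''\,\partial(\pi^*R_G)\wedge\bar\partial(\pi^*R_G)$ inside $\partial\bar\partial\chi$, and standard plurisubharmonic interpolation estimates on the fixed compact set $\overline{A}$ then ensure positivity throughout, completing the construction.
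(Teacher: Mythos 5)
Your plan hinges on an application of the $\partial\bar\partial$-lemma that does not go through, because the claim that $Y\setminus E$ is Stein is false. For $n\geq 2$, the space $(\C^n\setminus 0)/G \cong Y\setminus E$ fails holomorphic convexity: by Hartogs' extension theorem, every holomorphic function on $\C^n\setminus 0$ extends across the origin (and likewise for $G$-invariant ones), so the holomorphically convex hull of a sphere contains the puncture. Equivalently, $\C^n\setminus 0$ has nonvanishing higher $\bar\partial$-cohomology, and a finite cover is Stein if and only if its base is, so $Y\setminus E$ is not Stein. ``Quasi-affine'' does not imply ``Stein'' once one removes a codimension-$\geq 2$ locus from an affine variety. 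Without Steinness, the vanishing of $[\omega_0-\pi^*\omega_G]$ in $H^2(Y\setminus E;\R)$ does \emph{not} automatically yield a global potential $f$ with $\omega_0-\pi^*\omega_G=\tfrac{i}{2}\partial\bar\partial f$; one would need to control the Bott--Chern class, which involves $H^1(\mathcal{O}_{Y\setminus E})$ and can be large (e.g.\ for $n=2$). Restricting to a neighbourhood of the transition annulus does not help, since such annuli in $\C^n$ ($n\geq 2$) are not Stein either, for the same Hartogs reason.

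There is a secondary problem: you ask $\rho$ to be convex, non-decreasing, $\equiv 0$ for $R\leq\delta$, and $\equiv 1$ for $R\geq\epsilon$. No such smooth function exists: $\rho'$ must rise from $0$ and return to $0$, contradicting convexity. Since the non-negativity of $f\,\partial\bar\partial\chi$ is what your sign control relies on (via $\rho''\geq 0$ and shifting $f\geq 0$), this part of the positivity argument also breaks. More structurally, once $\epsilon$ is required to be small, the width of the transition annulus shrinks, so $|\rho'|\gtrsim (\epsilon-\delta)^{-1}$ cannot be made small; ``spreading $\rho$ out'' conflicts with the statement you are trying to prove (agreement with $\pi^*\omega_G$ outside an arbitrarily small $B_\epsilon$).

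The paper sidesteps both issues by reducing to Lemma~\ref{lemma Q factorial resolution}: using $\Q$-factoriality of $\C^n/G$, one produces a meromorphic section $S$ of a power of a very ample line bundle $L$ on $Y$ whose zeros and poles near $\pi^{-1}(0)$ lie in $\pi^{-1}(0)$. Then $\log|S|^2$ is a genuine potential for a positive curvature form on a punctured neighbourhood of $E$, the cutoff is chosen to be $1$ near $E$ and $0$ outside (opposite to yours), and positivity on the transition region is arranged by scaling the correction by a small constant $\delta$ rather than by stretching the cutoff. That construction is entirely independent of whether any global $\partial\bar\partial$-potential for $\omega_0-\pi^*\omega_G$ exists, which is exactly what your proposal cannot supply.
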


This Lemma is immediate from Lemma \ref{lemma Q factorial resolution} because $Y$ admits a K\"{a}hler form $\omega_Y$ which agrees with $\pi^*\omega_X$ outside of an arbitrarily small neighbourhood of $\pi^{-1}(0)$.

When $Y$ is a crepant resolution, the natural diagonal $\C^*$-action on $\C^n$ (and thus on $\C^n/G$) lifts to $Y$ by \fix{Proposition \ref{proposition lift of action}.} So $\pi:Y\to \C^n/G$ is $\C^*$-equivariant. The $S^1\subset \C^*$ defines an $S^1$-action $\widetilde{\phi}_t$ on $Y$ that lifts the action $\phi_t[z]=[e^{it}z]$ on $\C^n/G$. By averaging, namely replacing $\omega_Y$ by $\int_{S^1} \widetilde{\phi}_t^*\omega_Y$, we may assume that the $\omega_Y$ above is $S^1$-invariant.\footnote{such an averaging does not affect $\pi^*\omega_G$, which is already $\phi_t$-invariant.}

\subsection{Vanishing of symplectic cohomology of crepant resolutions}
\label{Subsection Vanishing of the symplectic cohomology of crepant resolutions} 
Below we prove directly that $SH^*(Y)=0$ based on \cite{Ritter2}, but we remark that $SH^*(M)=0$ is a general result \cite{Ritter4} for convex symplectic manifolds $M$ with Chern class $c_1(M)=0$ whose Reeb flow at infinity arises from a Hamiltonian $S^1$-action $\phi_t$ with non-zero Maslov index $2I(\phi)\in \Z$ (in our setup, $I(\phi)=n$ is the winding number arising in the proof of Theorem \ref{Theorem SH=0}). Indeed, the assumptions imply that $SH^*(M)$ is $\Z$-graded, $c^*:QH^*(M)\to SH^*(M)$ is a quotient $\K$-algebra homomorphism, and the $S^1$-action defines a $\K$-linear automorphism $\mathcal{R}_g\in \mathrm{Aut}(SH^*(M))$ of degree $2I(\phi)$, but $\dim_{\K}QH^*(M)=\dim_{\K} H^*(M,\K)<\infty$ so $SH^*(M)=0$. 

Recall crepant resolutions $\pi: Y \to X=\C^n/G$ admit a $\C^*$-action, yielding an action $\phi_t$ by $S^1\subset \C^*$, and by averaging the K\"{a}hler form by $\int\phi_t^*\omega_Y$ we may assume $\omega_Y$ is $S^1$-invariant. In general, for a symplectic manifold $(M,\omega)$ with an $S^1$-action, if $\omega$ is $S^1$-invariant the action is symplectic; and if $H^1(M,\R)=0$ (e.g.\,if $M$ is simply connected) the action is Hamiltonian.\footnote{Indeed, the vector field $v=v_t$ that generates the $S^1$-action $\phi_t$ on $M$ defines a form $\sigma=\sigma_t=\omega(\cdot,v)$, which is closed due to the Cartan formula $d(\iota_{v}\omega)+\iota_v d\omega = \mathcal{L}_{v}\omega = \partial_t|_{t=0}\phi_t^*\omega$. Thus $[\sigma]\in H^1(M,\R)=0$, so $\sigma=dH$ and $v=X_H$ is Hamiltonian, where $H=H_t: M\to \R$.}

\begin{lemma}
Let $\pi: Y \to X=\C^n/G$ be a crepant resolution, where $G\subset SL(n,\C)$ is a finite subgroup. If the K\"{a}hler form on $Y$ is $S^1$-invariant, then the $S^1$-action is Hamiltonian.
\end{lemma}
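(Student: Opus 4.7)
The plan is to reduce this to the standard criterion for when a symplectic $S^1$-action is Hamiltonian, namely the vanishing of the first real cohomology of the ambient manifold. The key input I would use is that $Y$ is simply connected: this is a general feature of resolutions of quotient singularities, and it is cited in the introduction of the paper as $\pi_1(Y) = 1$ via \cite[Theorem 7.8]{Kollar}. In particular $H^1(Y, \R) = 0$.

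First I would let $v$ denote the vector field on $Y$ that generates the given $S^1$-action $\widetilde{\phi}_t$, and set $\sigma = \iota_v \omega_Y \in \Omega^1(Y)$. Since $\omega_Y$ is $S^1$-invariant, the Lie derivative $\mathcal{L}_v \omega_Y = 0$ vanishes, and Cartan's magic formula $\mathcal{L}_v \omega_Y = d(\iota_v \omega_Y) + \iota_v d\omega_Y$ combined with $d\omega_Y = 0$ yields $d\sigma = 0$. Thus $\sigma$ represents a class in $H^1(Y, \R)$. By the previous paragraph this class is zero, so $\sigma = dH$ for some smooth function $H: Y \to \R$. By construction $\iota_v \omega_Y = dH$, which is precisely the statement that $v = X_H$ is the Hamiltonian vector field of $H$, so the $S^1$-action is Hamiltonian.

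The only potential subtlety is that a priori $v$ is a time-dependent vector field $v_t$ (as hinted in the earlier footnote), but for an $S^1$-action $\widetilde{\phi}_t$ on $Y$ the generator is in fact autonomous (the flow is $t$-equivariant in the group-theoretic sense), so $v$ and therefore $H$ can be chosen time-independent. I do not anticipate a real obstacle here: the argument is a direct application of the general fact, and the only nontrivial ingredient (the simple-connectedness of $Y$) is already recorded earlier in the paper. If one wanted to avoid quoting Koll\'ar, one could alternatively invoke $H^1(\C^n/G, \R) = 0$ together with the fact that crepant resolutions of rational singularities satisfy $R^1 \pi_* \R = 0$, yielding $H^1(Y, \R) = 0$ directly; but the cleaner route is simply to cite the already-invoked result $\pi_1(Y) = 1$.
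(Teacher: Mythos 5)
Your proof is correct and follows essentially the same route as the paper: the paper's one-line proof cites Koll\'ar's theorem that resolutions of quotient singularities are simply connected, while the general fact that a symplectic $S^1$-action on a manifold with $H^1(M,\R)=0$ is Hamiltonian (via the Cartan-formula argument you give) is recorded in the footnote immediately preceding the lemma. The only cosmetic difference is a sign convention in the definition of the Hamiltonian vector field, which does not affect the argument.
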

\begin{proof}
Resolutions of quotient singularities are always simply connected \cite[Thm.7.8]{Kollar}.
\end{proof}
Now assume $G\subset SU(n)$ acts freely on $\C^n\setminus 0$, $\omega_Y$ is $S^1$-invariant, and $\omega_Y=\pi^*\omega_G$ except on a neighbourhood $Y_{\epsilon}=\pi^{-1}(B_{\epsilon})$ of $\pi^{-1}(0)$ (see Sec.\ref{Subsection Convex symplectic manifold structure for resolutions of isolated singularities}). Recall $H_G=\frac{1}{2}|z|^2: \C^n/G\to \R$ generates the $S^1$-action $z\mapsto e^{it}z$ on $\C^n/G$. As $\pi$ is $S^1$-equivariant,
the Hamiltonian $H:Y\to \R$ generating the $S^1$-action  on $Y$ agrees with $\pi^*(H_G)$ on $Y\setminus Y_{\epsilon}$ up to an additive constant.

\begin{lemma}\label{Lemma generic slope gives constant orbits}
The period of closed orbits of $H:Y\to \R$ outside $\pi^{-1}(0)$ are integer multiples of $2\pi/|G|$. For $k\not\in \frac{2\pi}{|G|}\Z$, all $1$-orbits of $kH$ are contained in $\pi^{-1}(0)$, and for $k\not\in 2\pi \Q$ they are constant.
\end{lemma}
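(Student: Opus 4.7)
The plan is to split into the behaviour outside $\pi^{-1}(0)$, where $H$ comes (up to a constant) from the standard Hamiltonian $H_G=\tfrac12|z|^2$ on $\C^n/G$, and the behaviour inside $\pi^{-1}(0)$, where only the abstract $S^1$-action structure is available. Note that the $S^1$-action on $Y$ lifts the action fixing $0\in\C^n/G$, so it preserves both $\pi^{-1}(0)$ and its complement, hence closed orbits in $Y\setminus\pi^{-1}(0)$ stay there.

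On $Y\setminus\pi^{-1}(0)$ the Hamiltonian flow $\widetilde{\phi}_t$ is conjugate via $\pi$ to $[z]\mapsto [e^{it}z]$ on $(\C^n\setminus 0)/G$, since $H=\pi^*H_G+c$ there. A closed orbit of period $T$ through $[z]$ therefore requires $[e^{iT}z]=[z]$, i.e.\ $e^{iT}z=g\cdot z$ for some $g\in G$, uniquely determined by freeness of the $G$-action on $\C^n\setminus 0$. Hence $z$ is an eigenvector of $g$ with eigenvalue $e^{iT}$, and since the order of $g$ divides $|G|$ this eigenvalue is a $|G|$-th root of unity, forcing $T\in\tfrac{2\pi}{|G|}\Z$. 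The second assertion is then just the contrapositive: for $kH$ the flow outside $\pi^{-1}(0)$ is $[z]\mapsto [e^{ikt}z]$, so a $1$-orbit lying outside $\pi^{-1}(0)$ would require $k\in\tfrac{2\pi}{|G|}\Z$.

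For the last assertion I would argue topologically. Because $\widetilde{\phi}_t$ is a genuine circle action on $Y$ with $\widetilde{\phi}_{2\pi}=\mathrm{Id}$, the stabiliser of any point $p\in Y$ is a closed subgroup of $S^1=\R/2\pi\Z$, hence either all of $S^1$ (equivalently $p$ is a fixed point of the whole action and its $\widetilde{\phi}_{kt}$-orbit is constant) or a finite cyclic subgroup $\mu_m$. A $1$-orbit of $kH$ through $p$ means $k\bmod 2\pi\in\mathrm{Stab}(p)$; if the stabiliser were $\mu_m$ this would force $k\in\tfrac{2\pi}{m}\Z+2\pi\Z\subset 2\pi\Q$. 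Therefore $k\notin 2\pi\Q$ implies $\mathrm{Stab}(p)=S^1$ and the orbit is constant.

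I expect no serious obstacle here; the argument is essentially bookkeeping about finite subgroups of $S^1$ and eigenvalues of elements of $G$. The one subtlety worth confirming is that the Hamiltonian $H$ constructed in Section \ref{Subsection Convex symplectic manifold structure for resolutions of isolated singularities} truly generates the given $S^1$-action of period exactly $2\pi$ (rather than some proper covering action), which is built into the setup via $H$ being the moment map of that $S^1$-action on the $S^1$-invariant $(Y,\omega_Y)$.
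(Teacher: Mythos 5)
Your proposal is correct and uses essentially the same ideas as the paper's proof: the first part reduces via $S^1$-equivariance of $\pi$ to the standard flow on $(\C^n\setminus 0)/G$ and then uses Lagrange to bound the order of $g$ (you phrase this via eigenvalues of $g$, while the paper speaks of lifting the $|G|$-th iterate of the Reeb orbit to $S^{2n-1}$, but these are the same argument), and the last part relies on $e^{ik}$ generating a dense subgroup of $S^1$ when $k\notin 2\pi\Q$, forcing the stabiliser to be all of $S^1$.
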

\begin{proof}
As the map $\pi$ is $S^1$-equivariant, the $1$-orbits in $Y\setminus \pi^{-1}(0)$ agree with the $1$-orbits in $(\C^n\setminus 0)/G$. By Lagrange's theorem, the $|G|$-th iterate of a closed Reeb orbit in $S^{2n-1}/G$ will lift to a closed Reeb orbit in $S^{2n-1}$, whose period must be in $2\pi \Z$. The first two claims follow. For the final claim, consider the initial point $p$ of a $1$-orbit in $\pi^{-1}(0)$. The stabilizer in $S^1$ of $p$ contains $e^{ik}$, which has a dense orbit in $S^1$ for $k\not\in 2\pi \Q$ and so $p$ is fixed by $S^1$.
\end{proof}

\begin{remark}\label{Remark SH is lim HFLk}
The Hamiltonians $L_k=kH$ for $k\not\in 2\pi\Q$ determine $SH^*(Y)=\varinjlim HF^*(L_k)$, but they cannot be used in the construction of $SH^*_+(Y)$ (see Appendix D).
\fix{We briefly clarify the meaning of the notation $HF^*(L_k)$.
Recall that the K\"{a}hler metric on $Y$ is $S^1$-invariant under the Hamiltonian $S^1$-action, and the fixed locus of that action lies in the compact set $\pi^{-1}(0)$. The proof Lemma 1 in Frankel \cite{Frankel} proves that the fixed locus is a compact Riemannian submanifold $C\subset Y$ lying inside $\pi^{-1}(0)$, and that it is a Morse-Bott submanifold for the Hamiltonian.
Observe that the (constant) $1$-orbits of $L_k$ are the points of $C$. When we write $HF^*(L_k)$, it is understood that one either uses a Morse-Bott Floer complex (see Remark \ref{Remark from CZReeb to CZHam}) or one uses a generic small compactly supported perturbation of $L_k$ (in the sense of Hofer-Salamon \cite[Theorem 3.1]{Hofer-Salamon}). Up to isomorphisms, the choice of perturbation does not affect $HF^*(L_k)$ nor the continuation maps that define the direct limit $SH^*(Y)$. By a judicious choice of perturbation, using an auxiliary Morse function $f_S: S\to \R$ on each connected Morse-Bott submanifold $S$ (see \cite[Prop.2.2]{CFHW} and \cite[Appendix B]{KwonvanKoert}) the generators of the Floer complex after perturbation can be identified with the critical points $x$ of the functions $f_S$, and the grading of those generators is $\mu(S)+\mathrm{ind}_{f_S}(x)$ where $\mu(S)$ is as in \eqref{Equation Morse-Bott shift} and $\mathrm{ind}_{f_S}(x)$ is the Morse index of $x\in \mathrm{Crit}(f_S)$. 
}
\end{remark}

\begin{theorem}\label{Theorem SH=0}
The generators of $CF^*(L_k)$, for $k\not\in 2\pi\Q$, lie in arbitrarily negative degree for large $k$, therefore 
$$
SH^*(Y)=0, 
\quad 
ESH^*(Y)=0,
\quad
SH^*_+(Y)\cong H^{*+1}(Y,\K),
\quad
ESH^*_+(Y) \cong H^{*+1}(Y,\K)\otimes_{\K} \F.
$$
\end{theorem}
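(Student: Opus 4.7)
The plan is to follow the outline in Sec.\ref{Subsection An outline of our proof using Floer theory} (modelled on \cite{Ritter2}): the first task is to control the Conley--Zehnder indices of the constant $1$-orbits of $L_k=kH$, and everything else then follows formally from long exact sequences. By Lemma \ref{Lemma generic slope gives constant orbits}, for $k\notin 2\pi\Q$ every $1$-orbit of $L_k$ is constant and fixed by the Hamiltonian $S^1$-action $\phi$ generated by $H$, hence lies in the fixed locus $F=\mathrm{Fix}(\phi)\subset\pi^{-1}(0)$. The locus $F$ is a disjoint union of closed complex submanifolds $F_c\subset Y$, each contributing a Morse--Bott family of constant $1$-orbits to $CF^*(L_k)$.

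At $p\in F_c$ I would linearise $\phi$ to obtain a unitary $S^1$-representation on $T_pY\cong\C^n$ with integer weights $w_1,\ldots,w_n$: the zero weights span $T_pF_c$ and the non-zero ones span the symplectic normal, where $e^{ikw_j}\neq 1$ precisely because $k\notin 2\pi\Q$. The crucial numerical input is $\sum_j w_j(p)=n$, which holds because crepancy forces $Y$ to be Calabi--Yau, so the $G$-invariant form $dz_1\wedge\cdots\wedge dz_n$ on $\C^n$ descends and extends to a nowhere-vanishing holomorphic volume form $\Omega$ on $Y$; the character by which $\phi_t^*$ acts on $\Omega$ must be $e^{int}$, already determined on $(\C^n\setminus 0)/G$, and at the fixed point $p$ this character equals $e^{i(\sum w_j)t}$. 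Using the canonical trivialisation of $\gamma^*K_Y$ induced by $\Omega$, properties \ref{CZ of a direct sum} and \ref{CZ of a path in unitary line} give
\[
 \mathrm{CZ}_{\mathrm{MB}}(F_c,L_k)\;=\;\sum_{w_j\neq 0}\mathrm{CZ}\bigl(t\mapsto e^{ikw_jt}\bigr)\;=\;\frac{kn}{\pi}+O(1)\qquad(k\to\infty),
\]
the $O(1)$ bounded independently of $k$ by the pattern of weights at $F_c$. Adding the Morse--Bott shift to convert $\mathrm{CZ}$ into the Floer grading $\mu$ (which depends on $\dim F_c$ but not on $k$), every generator of $CF^*(L_k)$ ends up in degree tending to $-\infty$, establishing the first sentence of the theorem.

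The vanishing $SH^*(Y)=\varinjlim_k HF^*(L_k)=0$ then follows because the continuation maps are grading-preserving (using $c_1(Y)=0$): in any fixed degree $d$ there exists $k_0$ with $CF^d(L_k)=0$ for all $k>k_0$. The remaining three identities are formal. The $u$-adic spectral sequence \eqref{Equation spectral sequence u adic filtration} has $E_1$-page built from $SH^*(Y)=0$, hence $ESH^*(Y)=0$. Plugging $SH^*(Y)=0$ into the long exact sequence \eqref{Equation LES for SH and SH+} gives $SH^*_+(Y)\cong H^{*+1}(Y,\K)$, and plugging $ESH^*(Y)=0$ into the equivariant analogue (Corollary \ref{Corollary LES for H SH and SHplus}), together with $EH^*(Y)\cong H^*(Y)\otimes_{\K}\F$ (since $S^1$ acts trivially on constant orbits), yields $ESH^*_+(Y)\cong H^{*+1}(Y,\K)\otimes_{\K}\F$.

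The principal obstacle is the index bookkeeping in the second step: one must confirm that the Morse--Bott shift at each $F_c$ is uniformly bounded in $k$ and pin down the sign of the growth $\mathrm{CZ}\sim kn/\pi$ consistently against the trivialisation induced by $\Omega$. A secondary technical point is that $L_k=kH$ is quadratic rather than linear in the radial coordinate at infinity, so one must invoke the non-exact filtration of Appendix D both to define $SH^*(Y)$ as the relevant direct limit over these Hamiltonians and to justify the Gysin-type long exact sequences used above.
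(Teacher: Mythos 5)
Your proof follows essentially the same route as the paper: use Lemma \ref{Lemma generic slope gives constant orbits} to reduce to constant orbits in the fixed locus, linearise the $\C^*$-action there, use the Calabi--Yau condition to control the sum of the weights, deduce that the Conley--Zehnder indices grow linearly in $k$ so the Floer gradings $\mu = n - \mathrm{CZ}$ tend to $-\infty$, and then obtain the remaining three isomorphisms formally from the $u$-adic spectral sequence \eqref{Equation spectral sequence u adic filtration} and Corollary \ref{Corollary LES for H SH and SHplus}.

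Two small points where your write-up should be tightened. First, the step ``the character by which $\phi_t^*$ acts on $\Omega$ \ldots is already determined on $(\C^n\setminus 0)/G$'' deserves a justification: the function $f_w(y)=\phi_w^*\Omega|_{\phi_w(y)}/\Omega|_y$ is a priori only a cocycle, not a character, away from fixed points, so one cannot literally ``evaluate the character'' on $Y\setminus Y_\epsilon$ and transport it to $\pi^{-1}(0)$. The paper handles this by noting that $f\in [\C^*\times Y,\C^*]\cong H^1(\C^*\times Y)$ is detected by its winding in the $\C^*$-direction because $H^1(Y)=0$, which equals $n$ on the end; equivalently one can argue that $t\mapsto f_{e^{it}}(y)$ is a loop in $\C^*$ whose winding number is a locally constant $\Z$-valued function of $y\in Y$, so it is constant and equal to $n$. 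Second, $L_k=kH$ is \emph{linear}, not quadratic, in $R$ at infinity (since $H=\frac{1}{2}|z|^2=\frac{1}{2}R$); it is the Hamiltonians $H_k$ used for $SH^*_+$ in Section \ref{Subsection Computation of positive equivariant symplectic cohomology of crepant resolutions} that are quadratic near $\pi^{-1}(0)$. Finally, your weight-sum equals $+n$ whereas the paper writes $-\sum m_j = n$: both are internally consistent once one fixes whether one linearises the flow or its inverse and pulls back or pushes forward the volume form, and both give $\mathrm{CZ}\sim kn/\pi\to+\infty$, hence $\mu\to-\infty$, so the conclusion is unaffected; but you should state your conventions to avoid ambiguity.
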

\begin{proof}
As $Y$ is crepant and\footnote{The second condition ensures that sections of $\mathcal{K}$ agree on the boundary, up to homotopy.}
 $H^1(\partial Y_{\epsilon},\R)\cong H^1(S^{2n-1}/G,\R)=0$ (using Remark \ref{Remark cohomology of finite quotients}), there is a nowhere zero smooth section $s$ of the canonical bundle $\mathcal{K}$ of $Y$ agreeing with the pull-back of the standard section on $Y\setminus Y_{\epsilon}\cong (\C^n\setminus B_{\epsilon})/G$ for $\mathcal{K}_{\C^n}$,
\begin{equation}\label{Equation standard canonical section}
s|_{Y - Y_\epsilon} = dz_1 \wedge \cdots \wedge dz_n.
\end{equation}
The $\C^*$-action $\phi_{w}$ on $Y$ induces a $\C^*$-action on $\mathcal{K}$, thus it defines a function $f_w:Y\to \C^*$ by
$$
\phi_{w}^*(s|_{\phi_w(y)}) = f_w(y)\, s|_y.
$$
By \eqref{Equation standard canonical section}, $f_w(y)=w^{n}$ for $y\in Y\setminus Y_{\epsilon}$. The map $f: \C^*\times Y \to \C^*$, $f(w,y)=f_w(y)$, defines a homotopy class of maps in 
$$
[\C^*\times Y,\C^*] \cong H^1(\C^*\times Y) = (H^1(\C^*)\otimes H^0(Y)) \; \oplus \; (H^0(\C^*)\otimes H^1(Y)).
$$
%
%
Only $H^1(\C^*)$ matters, as $Y$ is connected and $H^1(Y)=0$ (as $Y$ is simply connected). Thus $f$ is homotopic to the map $(w,y)\mapsto w^{n}$.
Given a (constant) $1$-orbit $x$ of $L_k=kH$, let $p=x(0)\in \pi^{-1}(0)$ be the initial point. As $p$ is a fixed point, we may linearise the $\C^*$-action on $T_p Y$:
$$
\C^* \times T_p Y \to T_p Y,\quad 
(w,Z_j) \mapsto w^{m_j}Z_j
$$
for some $m_1,\cdots,m_n \in \Z$, where $Z_1,\cdots,Z_n$ are a basis of $T_p Y$ induced by a choice of $\C$-linear coordinates near $p$ (by a linear change of basis, we diagonalised the action at $p$). Thus, the action on $\mathcal{K}$ is by multiplication by $w^{-\sum m_j}$, so it must equal $w^n$, so $-\sum m_j=n$. 
As the time $t$ flow of $L_k=kH$ is $\phi_{w^k}=\phi_w^k$ for $w=e^{it}$, using Appendix C we deduce
$$\textstyle
\mathrm{CZ}(x) = \sum W(-km_j) \geq \sum 2 \lfloor \tfrac{-km_j}{2\pi} \rfloor \geq \sum  (\tfrac{-km_j}{\pi} -2) =  (\tfrac{k}{\pi}-2)n
$$
(using \ref{CZ of a direct sum}, \ref{CZ of a path in unitary line}, and $W(t)\geq 2 \lfloor \tfrac{t}{2\pi} \rfloor$), so the grading $\mu(x)\leq (3-\frac{k}{\pi})n$. \fix{Thus we conclude that $\mu(x)\to -\infty$ as $k\to \infty$. The final claim then follows, because $SH^*(Y)=\varinjlim HF^*(L_k)$ is a direct limit over grading-preserving maps.}\footnote{\fix{A small perturbation of $L_k$ to a generic Hamiltonian as described in Remark \ref{Remark SH is lim HFLk} will change Conley-Zehnder indices by at most $\dim_{\R}(Y)$, so $\mu(x)\to -\infty$ as $k\to \infty$ still holds even after perturbation.
We remark that one can also prove directly (and more generally) that Conley-Zehnder indices change by at most $\dim_{\R}(Y)$ after perturbation, without appealing to the Morse-Bott argument in Remark \ref{Remark SH is lim HFLk}, by the same argument as in McLean \cite[Lemma 4.10]{McLean}.}}
The same argument applies to $ESH^*(Y)$, using that $\F$ lies in negative degrees (alternatively, it follows from $SH^*(Y)=0$ by \eqref{Equation spectral sequence u adic filtration}). The final two results follow by Corollary \ref{Corollary LES for H SH and SHplus}.
\end{proof}

\subsection{Computation of $ESH^*_+$ of crepant resolutions}
\label{Subsection Computation of positive equivariant symplectic cohomology of crepant resolutions}

We refer to Appendices B and D for the construction of $ESH^*_+$. We choose a specific sequence $H_k$ of {\bf admissible Hamiltonians} (see \ref{Subsection Admissible Hamiltonians}) with final slope $k\not\in 2\pi\Q$, to compute $ESH^*_+(Y)=\varinjlim EHF^*_+(H_k)$. Recall $H:Y \to \R$ is the Hamiltonian generating the $S^1$-action on $Y$, and by Lemma \ref{lemma convex symplectic manifold on resolution} the radial coordinate $R$ on $Y\setminus Y_{\epsilon}$ (in the sense of Sec.\ref{Subsection convex symplectic manifolds}) agrees via $\pi:Y\to \C^n/G$ with the radial coordinate $R_G=|z|^2$ on $(\C^n\setminus B_{\epsilon})/G$; in that region $H^2=\frac{1}{4}R^2$.
Define $H_k = H^2$ except on the region where $H^2$ has slope $\geq k$ in $R$, and extend by $H_k = kR$ outside of that region. By projecting via $\pi$ and then projecting to $S_G=S^{2n-1}/G$, there is a $1$-to-$1$ correspondence between the $1$-orbits defining $ECF^*_+(H_k)$ and the Reeb orbits in $S_G$ of length $\leq k$ 
\fix{
(analogously to Remark \ref{Remark SH is lim HFLk}, it is understood that a small perturbation of $H$ is needed near $\pi^{-1}(0)$, but the resulting generators near $\pi^{-1}(0)$ of the Floer complex will be quotiented out by definition of $ECF^*_+(H_k)$, see Section \ref{Section Appendix D}).
}
So we may abusively write $B_{{\bf g},\ell}\subset Y$ when referring to those orbits in $Y$ (recall $B_{{\bf g},\ell}\subset S_G$ from Lemma \ref{Corollary Bgell}).

\begin{corollary}\label{Corollary computation of ESH+}
Assume $\mathrm{char}\,\K=0$, or more generally $\mathrm{char}\,\K$ coprime to all integers $\leq |G|$.
\\
For ${\bf g}=[g]\in \mathrm{Conj}(G)$, the orbits in $\cup_{\ell>0} B_{{\bf g},\ell}\subset Y$ contribute a copy of \fix{the $\K$-vector space} $\F[-\mu_{\bf g}]$ to the $E_1$-page of the Morse-Bott spectral sequence for\footnote{more precisely, as we only take the direct limit on cohomology, $ECF^*_+(Y,H_k)$ sees at least the summand $\K[-\mu_g]\oplus \K[-\mu_g+2] \oplus \K[-\mu_g+4]\oplus \cdots \oplus \K[-\mu_g+2m n]$ of $\F[-\mu_g]=\K[-\mu_g]\oplus \K[-\mu_g+2]\oplus \cdots$ if $k\geq (m+1)\pi$.} $ESC^*_+(Y)$ (see Appendix E), where
$$
\mu_g=\mu_{{\bf g}}=2\,\mathrm{age}({\bf g})-1.
$$
Moreover, as a $\K$-vector space, $ESH^*_+(Y)$ has one summand $\F[-\mu_{{\bf g}}]$ for each ${\bf g}\in \mathrm{Conj}(G)$.
\end{corollary}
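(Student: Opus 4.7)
The plan is to compute the $E_1$-page of the Morse--Bott spectral sequence from Appendix E for $ESC^*_+(Y,H_k)$ using Theorems \ref{Theorem equiv coh of Morse Bott submanifolds} and \ref{Theorem CZ indices for Bott mfds}, to check that it is concentrated in odd total degrees so that the spectral sequence degenerates, and then to read off both claims by taking the direct limit in $k$.

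First I would fix ${\bf g}=[g]\in\mathrm{Conj}(G)$ and, for each $\ell>0$, compute the shifted summand $EH^*(\mathcal{O}_{{\bf g},\ell})[-\mu(B_{{\bf g},\ell})]$. Theorem \ref{Theorem equiv coh of Morse Bott submanifolds} (applicable under the coprimality hypothesis on $\mathrm{char}\,\K$) identifies this with a sum of copies of $\K$ placed in the $d_{{\bf g},\ell}:=\dim_{\C}V_{g,\ell}$ odd degrees $\mu(B_{{\bf g},\ell})+1,\,\mu(B_{{\bf g},\ell})+3,\,\ldots,\,\mu(B_{{\bf g},\ell})+2d_{{\bf g},\ell}-1$. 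Plugging in Theorem \ref{Theorem CZ indices for Bott mfds} together with $\dim B_{{\bf g},\ell}=2d_{{\bf g},\ell}-1$, the top of this window for $0<\ell\le 2\pi$ simplifies to $2\,\mathrm{age}({\bf g})-2\sum_{\ell'<\ell}d_{{\bf g},\ell'}-1$ and the bottom to $2\,\mathrm{age}({\bf g})-2\sum_{\ell'\le \ell}d_{{\bf g},\ell'}+1$, so consecutive short lengths produce adjacent, non-overlapping windows of odd degrees.

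Second I would telescope these windows. Lemma \ref{Lemma G acts freely} prevents $g$ from having eigenvalue $1$, so every eigenvalue of $g$ is of the form $e^{i\ell}$ with $\ell\in(0,2\pi]$, whence $\sum_{0<\ell\le 2\pi}d_{{\bf g},\ell}=n$. Thus the short Reeb lengths tile the $n$ consecutive odd degrees from $2\,\mathrm{age}({\bf g})-2n+1$ up to $2\,\mathrm{age}({\bf g})-1$, each with multiplicity one. The iterate identity $\mu(B_{{\bf g},\ell+2\pi k})=\mu(B_{{\bf g},\ell})-2kn$ from Theorem \ref{Theorem CZ indices for Bott mfds} slides this block down by $2n$ for each additional $2\pi$ of length; stacking the iterates fills in all odd degrees $\le 2\,\mathrm{age}({\bf g})-1$ exactly once. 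This is precisely $\F[-\mu_{{\bf g}}]$ with $\mu_{{\bf g}}=2\,\mathrm{age}({\bf g})-1$, which is the first assertion at the $E_1$-level.

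Third I would deduce degeneration: since every generator on the $E_1$-page of $ESC^*_+(Y,H_k)$ lies in odd total degree and the differentials raise total degree by $1$, all differentials vanish automatically and $E_1=E_\infty$. Passing to the direct limit as the slope $k\to\infty$ is grading-preserving, yielding $ESH^*_+(Y)\cong\bigoplus_{{\bf g}\in\mathrm{Conj}(G)}\F[-\mu_{{\bf g}}]$ as $\K$-vector spaces. The main obstacle I anticipate is the combinatorial bookkeeping in the second step: one must verify carefully that the windows indexed by distinct short lengths $\ell$ and by iterates $k\ge 0$ tile the odd degrees $\le \mu_{{\bf g}}$ with neither gap nor overlap, which rests crucially on the identity $\sum_{0<\ell\le 2\pi} d_{{\bf g},\ell}=n$ and the $2n$-periodic shift under iteration. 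Once these are in place the degeneration step is purely formal.
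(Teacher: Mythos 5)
Your approach and combinatorial bookkeeping for characteristic zero match the paper's proof closely: you compute the $E_1$-page via Theorems~\ref{Theorem equiv coh of Morse Bott submanifolds} and~\ref{Theorem CZ indices for Bott mfds}, verify that the short Reeb lengths tile the $n$ odd degrees $[\mu_g-2n+2,\mu_g]$ (using $\sum_{0<\ell\le 2\pi}\dim_{\C}V_{g,\ell}=n$ from Lemma~\ref{Lemma G acts freely}), and observe that iteration shifts the block down by $2n$, producing exactly $\F[-\mu_g]$; degeneration then follows because everything sits in odd total degree. All of this is correct and is the same argument as the paper's.

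However, there is a genuine gap in the positive-characteristic case. You invoke Theorem~\ref{Theorem equiv coh of Morse Bott submanifolds} as ``applicable under the coprimality hypothesis on $\mathrm{char}\,\K$'' uniformly for all $\ell>0$, but that theorem only guarantees $EH^*(\mathcal{O}_{{\bf g},\ell})\cong H^{*-1}(B/S^1_\ell)$ in characteristic zero or, for nonzero coprime characteristic, only when $\ell\le 2\pi$. The reason is that for $\ell>2\pi$ the $S^1_\ell$-stabilisers $m_p=\ell\,|G_p|/2\pi$ (Lemma~\ref{Lemma multiplicity of Reeb orbit}) can exceed $|G|$, so the quotient-by-finite-group cohomology isomorphism breaks down. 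Thus your $E_1$-page computation for the iterate windows ($k\ge 1$) is unjustified in positive characteristic. The paper circumvents this by invoking Theorem~\ref{Theorem SH=0} first, which already gives $ESH^*_+(Y)\cong H^{*+1}(Y)\otimes_{\K}\F$ as a free $\F$-module with generators in degrees $-1,\ldots,\dim_{\R}Y-2$, and then observes that every $\ell>2\pi$ contribution to the $E_1$-page lives in degree $\mu_g-2kn\le 2n-3-2n=-3$ (using $\mathrm{age}(g)\le n-1$), so neither those generators nor their differentials land in the range $[-1,\dim_{\R}Y-2]$; in that range only $\ell\le 2\pi$ orbits appear, where Theorem~\ref{Theorem equiv coh of Morse Bott submanifolds} does apply. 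You would need to add this degree-range argument to make the nonzero-characteristic case rigorous.
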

\begin{proof}
Suppose $\mathrm{char}\,\K=0$. By Theorems \ref{Theorem equiv coh of Morse Bott submanifolds}-\ref{Theorem CZ indices for Bott mfds}, each eigenspace $V=V_{g,\ell}$ of $g$ for $0<\ell\leq 2\pi$ yields a submanifold $B=B_{{\bf g},\ell}\subset Y$ which contributes a copy of $EH^*(B) \cong H^{*-1}(\C\P^{\dim_{\C}V-1})$ (by Theorem \ref{Theorem equiv coh of Morse Bott submanifolds}), shifted up in grading by the $\mu(B)$ in \eqref{Equation mu B of Morse Bott submfds}. 
By Theorem \ref{Theorem CZ indices for Bott mfds}, 
if $e^{i\ell}$ is the minimal eigenvalue of ${\bf g}$ then the maximum of $B/S^1$ contributes a generator in grading $\mu=\mu_g$,
$$
\mu_g=\mu(B)+\dim B =2\,\mathrm{age}(g)-1.
$$
So $B/S^1\cong \P(V)/G$, which has dimension $2\dim_{\C} V-2$, contributes one generator in each odd degree in the range $[\mu_g- 2\dim_{\C}V+2,\mu_g]$.
%
%
The next smallest eigenvalue $e^{i\ell'}$ of $g$, corresponding to an eigenspace $V'=V_{{\bf g},\ell'}$ and a submanifold $B'=B_{{\bf g},\ell'}$, will have a maximum in degree $\mu(B')+\dim B'= \mu_g - 2\dim_{\C}V$, so it contributes one generator in each odd degree in the range $[\mu_g-2\dim_{\C}V-2\dim_{\C}V'+2,\mu_g-2\dim_{\C}V]$.
Inductively, the eigenvalues of $g$ will account for one generator in each odd degree in the range $[\mu_g - 2n+2, \mu_g]$. The iteration formula in Theorem \ref{Theorem CZ indices for Bott mfds}, i.e.\,the cases $2k\pi<\ell\leq 2(k+1)\pi$ for $k\in \N\setminus 0$, contribute generators in all odd degrees $[\mu_g - 2n +2-2kn, \mu_g-2kn]=[\mu_g -2(k+1)n +2, \mu_g-2kn]$.

The second claim follows because the Morse-Bott spectral sequence degenerates: all generators are in odd total degree, so all differentials $d_r^{pq}$ on all pages $E^{pq}_r$ for $r\geq 1$ will vanish.

When $\K$ has non-zero characteristic, Theorem \ref{Theorem SH=0} implies that $ESH^*_+(Y)\cong H^{*+1}(Y)\otimes_{\K} \F$ is a free $\F$-module with generators in degrees $*=-1,0,1,\ldots,\dim_{\R} Y -2$ \fix{(not $\dim_{\R} Y -1$ as $H^{\dim_{\R}Y}(Y)=0$ since $Y$ is non-compact, and we can also exclude all even degrees including $\dim_{\R} Y -2$ since the generators of $ESH^*_+(Y)$ are in odd degree, $\F$ lies in even degrees and $\dim_{\R}Y$ is even). Moreover, the number of $\F$-summands in $ESH^*_+(Y)$ equals $$\dim_{\K} ESH^{-1}_+(Y)=\sum \dim_{\K} H^{2j}(Y) = \chi(Y),$$
the Euler characteristic of $Y$, because the generators of $ESH^*_+(Y)$ are in odd degree and $\F$ as a $\K$-vector space has exactly one generator in each non-positive even degree.
}%

\fix{In the range of degrees $-1,1,3,\ldots,\dim_{\R}Y-3$ mentioned above,} only generators corresponding to Reeb orbits of period $\ell\leq 2\pi$ can contribute because those of period $\ell>2\pi$ have grading $\mu_g-2kn\leq -3$ for $k\geq 1$, as $\mu_g=2\,\mathrm{age}(g)-1\leq 2n-3$ using that the age grading lies in $[0,n-1]$ by \eqref{Equation age} (thus their grading and that of their differentials does not land in the range \fix{$[-1,\dim_{\R}Y-3]$}). Finally, under the assumptions on $\mathrm{char}\,\K$, we can apply Theorem \ref{Theorem equiv coh of Morse Bott submanifolds} to the Morse-Bott manifolds of Reeb orbits of period $\ell\leq 2\pi$.
\fix{We refer the reader back to the closely related discussion below \eqref{Equation ESH+ is ordinary EH} for additional clarifications.}
\end{proof}

\section{Appendix A: Weil divisors, Cartier divisors and Resolutions}

In the paper, we work with analytic geometry, so the words \emph{regular}, \emph{rational}, \emph{isomorphism} below are replaced respectively by \emph{holomorphic}, \emph{meromorphic}, \emph{biholomorphic}. In this section, codimension always refers to the complex codimension.
By \fix{a {\bf variety}} $X$ we mean an irreducible normal quasi-projective complex variety.
Recall {\bf normal} means each point has a normal affine neighbourhood, and an affine variety is normal if its coordinate ring $\C[X]$ of regular functions is integrally closed (i.e.\,elements of its fraction field satisfying a monic polynomial over the ring must lie in the ring). Equivalently, all local rings of $X$ are integrally closed. Non-singular quasi-projective varieties are normal, since the local rings are UFDs.
Normality ensures that for a codimension one subvariety $Z\subset X$, there is some affine open of $X$ on which the ideal for $Z$ is principal in $\C[X]$.
It follows \cite[Chp.II.5.1 Thm.3]{Shafarevich} that the subvariety of singular points of $X$ has codimension at least $2$.
Any quotient $Y=X/G$ of a normal affine 
variety $X$ by a finite group $G$ of automorphisms is also normal: if $f\in \C(Y)$ is integral over $\C[Y]=\C[X]^G$ then it is integral over $\C[X]$, so  $f\in \C[X]$ by normality, but functions in $\C(Y)$ are $G$-invariant, so $f\in \C[Y]$.
In particular, $\C^n/G$ is normal for any finite subgroup $G\subset SL(n,\C)$.

%
%
%

\begin{remark}
Normality is equivalent to requiring that rational functions bounded in a neighbourhood of a point must be regular at that point. The removable singularities theorem shows $\C$ is normal, and Hartogs' extension theorem becomes: for any subvariety $V\subset X$ of codimension at least $2$, any regular function on $X\setminus V$ extends to a regular function on $X$.
\end{remark}

As we work with singular varieties, we need to distinguish two notions of divisor, which coincide for non-singular varieties.
A {\bf Weil divisor} is a finite formal $\Z$-linear combination $\sum a_m V_m$ of irreducible closed subvarieties of codimension one. It is {\bf effective} if all $a_m\geq 0$. A rational section $s$ of a line bundle $L\to X$ defines a Weil divisor $(s)=\sum \mathrm{ord}_s(Z)\, Z$, where we sum over irreducible closed subvarieties $Z\subset X$ of codimension one, and $\mathrm{ord}_s(Z)$ is the associated valuation.\footnote{In analytic geometry, in a local trivialisation near a generic point $p\in Z$, $s$ is given by a meromorphic function $f=gz^k$, where $g$ is an invertible holomorphic function, and $z$ is a holomorphic coordinate extending a local basis of holomorphic coordinates for $Z$ near $p$. Then one defines $\mathrm{ord}_s(Z)=k$.}
Similarly, a global non-zero rational function $f$ on $X$ defines a {\bf principal} Weil divisor $(f)$, and in this notation, $(f/g)=(f)-(g)$ for such functions $f,g$. Two \fix{Weil divisors $D_1,D_2$ are \emph{linearly equivalent} if their difference is principal, $D_1-D_2=(f)$. The corresponding equivalence classes of Weil divisors define the \emph{Weil divisor class group} $\mathrm{Cl}(X)$.}
The {\bf support} of a Weil divisor $\sum a_m V_m$ is the subset $\cup V_m$ taking the union over all $a_m\neq 0$. 

A {\bf Cartier divisor} $D$ is defined by \fix{an equivalence class of data:} an open cover $U_i$ of $X$ together with non-zero rational functions $f_i$ on $U_i$, such that $f_i/f_j$ is regular on the overlap $U_i\cap U_j$. One identifies two data sets if one can pass to a common refinement \fix{of the cover or rescale} the $f_i$ by invertible regular functions. The {\bf support} is the union of zeros and poles of the $f_i$. A {\bf principal} Cartier divisor is given by the data $(X,f)$ for a global meromorphic function $f$. Two Cartier divisors are {\bf linearly equivalent} if they differ by a principal Cartier divisor. Cartier divisors up to linear equivalence correspond to complex line bundles on $X$ up to isomorphism. The associated bundle $\mathcal{O}(D)$ is constructed from the $U_j\times \C$ using $f_i/f_j$ as transition function $U_j\times \C \to U_i \times \C$. The line bundle admits a rational section $s$ given by $s=f_j$ on $U_j$, so in particular the Weil divisor $(s)$ agrees locally with $(f_j)$.

As the variety is normal, Cartier divisors can also be defined as the ``locally principal Weil divisors", namely a Weil divisor that locally is equal to $(f)$ for some meromorphic function $f$ on $X$. Explicitly $D=\sum a_m Z_m$ \fix{with $a_m=\mathrm{ord}_{f_i}(Z_m)$} for any $f_i$ satisfying $U_i\cap Z_m\neq \emptyset$. 

A Weil divisor $D$ is {\bf $\Q$-Cartier} if $mD$ is Cartier for some $m\in \N$.
A quasi-projective variety $X$ is {\bf $\Q$-factorial} if all Weil divisors are $\Q$-Cartier.
Algebraic or analytic varieties over $\C$ with only quotient singularities are $\Q$-factorial \cite[Prop.5.15]{kollarmori}. So $\C^n/G$ is $\Q$-factorial for any finite group $G\subset SL(n,\C)$. The idea is that, although in general Weil divisors do not \fix{pull back to Weil divisors,\footnote{Pull-backs of Weil divisors are not usually defined. However, $\pi:\C^n\to \C^n/G$ is a finite flat degree $|G|$ cover over the complement of the singular set which has codimension $\geq 2$ (consisting of points of $\C^n$ with non-trivial stabilizer). So the pre-image $\pi^{-1}(Z)$ of an irreducible codimension $1$ subvariety of $\C^n/G$ is a codimension $1$ subvariety over that complement and can then be uniquely extended to a Weil divisor on $\C^n$.} in this case} a Weil divisor $D$ in $\C^n/G$ pulls back
%
%
 to a Weil divisor in $\C^n$, in particular this is Cartier so locally it is cut out as $(f)$, then the averaged function $\sum_{g\in G} f\circ g^{-1}\in \C[\C^n]^G=\C[\C^n/G]$ locally cuts out $|G|\cdot D$, so $|G|\cdot D$ is Cartier.
%
%
%

Let $\pi: Y \to X$ be a morphism of varieties.
The {\bf push-forward} of Weil divisors is defined by $\pi_*(\sum a_i V_i)= \sum a_i' \overline{\pi(V_i)}$ with $a_i=a_i'$ if the closure $\overline{\pi(V_i)}\subset X$ is a codimension one subvariety, and $a_i'=0$ otherwise. This is in general only a Weil divisor, even if $\sum a_i V_i$ is Cartier.
If $\pi$ is a dominant map (i.e.\,with dense image), then the {\bf pull-back}
 of a Cartier divisor given by data $(U_i,f_i)$ on $X$ is the Cartier divisor $(\pi^{-1}(U_i),\pi^*f_i)$ on $Y$. This corresponds to pull-back for the corresponding line bundles.
 
By a {\bf resolution} of $X$, we mean a non-singular variety $Y$ with a proper birational morphism  $\pi : Y \to X$, such that the restriction $\pi: \pi^{-1}(X_{\mathrm{reg}}) \to X_{\mathrm{reg}}$ is an isomorphism over the smooth locus $X_{\mathrm{reg}}=X\setminus \mathrm{Sing}(X)$ of $X$. 
%
%
%
Resolutions always exist by Hironaka's theorem \cite{Hironaka}.
	
\begin{lemma} \label{lemma Q factorial resolution}
Let $X$ be a $\Q$-factorial variety with only one singular point, at $0 \in X$, and let $\omega_X$ be a K\"{a}hler form on $X \setminus 0$.
\fix{Any resolution} $\pi : Y \to X$ admits a K\"{a}hler form $\omega_Y$ such that $\omega_Y = \pi^* \omega_X$ outside of an arbitrarily small neighbourhood
of $\pi^{-1}(0)$.
\end{lemma}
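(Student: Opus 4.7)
Plan. I construct $\omega_Y$ by interpolating, via a smooth cutoff near the exceptional locus $E=\pi^{-1}(0)$, between the pull-back $\pi^*\omega_X$ on the outer region and an auxiliary K\"{a}hler form built from local $i\partial\bar\partial$-potentials. Since $Y$ is nonsingular quasi-projective, it carries a reference K\"{a}hler form $\omega_Y^0$ (restricted from Fubini--Study via a projective embedding), which will supply the positivity reserve needed near $E$. The whole argument is a cut-off gluing at the level of $i\partial\bar\partial$-potentials.

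The first key step is to produce a local K\"{a}hler potential for $\omega_X$ in a punctured neighbourhood of $0$. Pick a small Stein analytic neighbourhood $V\subset X$ of $0$; by Grauert's $\partial\bar\partial$-lemma on the smooth Stein space $V\setminus 0$, there is a smooth plurisubharmonic $\phi_X\colon V\setminus 0\to\R$ with $\omega_X|_{V\setminus 0}=i\partial\bar\partial\phi_X$. The $\Q$-factoriality of $X$ is invoked here to guarantee that $\phi_X$ may be chosen with bounded values and bounded first derivatives as $z\to 0$ (so that $\pi^*\phi_X$ is well-controlled on arbitrarily small neighbourhoods of $E$). After further shrinking $V$, I also fix a smooth potential $\phi_Y^0$ for $\omega_Y^0$ on the Stein open $N:=\pi^{-1}(V)\subset Y$.

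Second, I perform the gluing. Given any desired small neighbourhood $U$ of $E$, choose nested opens $E\subset U_1\Subset U_2\Subset U$ with $\overline{U_2}\subset N$ and a smooth cutoff $\chi\colon Y\to[0,1]$ equal to $0$ on $U_1$ and to $1$ outside $U_2$. For a constant $A>0$ to be specified, define
\[
\omega_Y \;:=\; i\partial\bar\partial\bigl(A(1-\chi)\phi_Y^0+\chi\,\pi^*\phi_X\bigr)\text{ on } \overline{U_2}, \qquad \omega_Y \;:=\; \pi^*\omega_X\text{ on } Y\setminus U_2,
\]
where on $U_1$ the ill-defined factor $\pi^*\phi_X$ is replaced by any smooth extension (its value is killed by $\chi\equiv 0$ there). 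On the overlap, where $\chi\equiv 1$, the first expression reduces to $i\partial\bar\partial(\pi^*\phi_X)=\pi^*\omega_X$, so the two definitions match smoothly and $\omega_Y$ is a globally defined closed $(1,1)$-form on $Y$ equal to $\pi^*\omega_X$ outside $U$.

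The main obstacle, and the step requiring the most care, is verifying positivity on the transition shell $\overline{U_2}\setminus U_1$. Expanding $i\partial\bar\partial$ produces a principal term $A(1-\chi)\omega_Y^0+\chi\,\pi^*\omega_X$, which is K\"{a}hler as a convex combination of K\"{a}hler forms, together with error terms of the shape $d\chi\wedge d^c(\pi^*\phi_X-A\phi_Y^0)$ and $(\pi^*\phi_X-A\phi_Y^0)\,i\partial\bar\partial\chi$. Choosing $\chi$ to transition slowly and $A$ sufficiently large, one dominates the error: near $\chi\approx 1$ the derivatives of $\chi$ are small so the term $\chi\pi^*\omega_X$ provides the needed positivity, while for $\chi$ bounded away from $1$ the reserve $A(1-\chi)\omega_Y^0$ controls the error uniformly. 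The boundedness of $\phi_X$ obtained in the first step via $\Q$-factoriality is essential here, since without it the error terms would blow up as the neighbourhood $U$ is shrunk. This quantitative positivity check is the principal analytic difficulty of the proof.
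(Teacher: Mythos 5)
There are two genuine gaps, and both concern the places where you assert the key technical facts without justification.

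The first gap is the invocation of $\Q$-factoriality. You state that $\Q$-factoriality guarantees the local potential $\phi_X$ on $V\setminus 0$ may be chosen with bounded values and bounded first derivatives near $0$, but you give no mechanism and there is no such implication: $\Q$-factoriality is a divisorial condition and does not deliver analytic estimates on potentials. (Moreover, the existence of a smooth plurisubharmonic potential on $V\setminus 0$ is itself not automatic: for $\dim X\geq 2$ the punctured Stein neighbourhood $V\setminus 0$ is not Stein, so the $\partial\bar\partial$-lemma you cite does not apply directly.) The paper uses $\Q$-factoriality in a concrete and entirely different way: starting from a very ample line bundle $L\to Y$ with associated Cartier divisor $D$, the push-forward $\pi_*D$ is a Weil divisor on $X$, so $\Q$-factoriality gives $m\in\N$ with $m\pi_*D$ Cartier, hence locally principal near $0\in X$, say $m\pi_*D=(f)$. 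Then $mD-(\pi^*f)$ is a Cartier divisor on $Y$ whose support, inside a neighbourhood $U$ of $\pi^{-1}(0)$, lies entirely within $\pi^{-1}(0)$; the associated meromorphic section $S$ of $L^{\otimes m}$ therefore has no zeros or poles on the transition shell, which is exactly what the gluing requires. No boundedness of potentials is needed, and indeed the statement fixes the neighbourhood $U$ first, so there is no limit $U\to 0$ to worry about.

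The second gap is the positivity check. You propose taking $A$ large to dominate the error on the transition shell, but the error terms coming from $A(1-\chi)\phi_Y^0$ — namely $-A\bigl(i\partial\chi\wedge\bar\partial\phi_Y^0 + i\partial\phi_Y^0\wedge\bar\partial\chi + \phi_Y^0\, i\partial\bar\partial\chi\bigr)$ — scale linearly in $A$ as well. Equivalently, the relevant quantity is $A\cdot i\partial\bar\partial[(1-\chi)\phi_Y^0]$, and $i\partial\bar\partial[(1-\chi)\phi_Y^0]$ is not a nonnegative $(1,1)$-form (it is the $\partial\bar\partial$ of a function that has been cut off to vanish at the outer edge of the shell); scaling by $A$ only magnifies its negative directions. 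The paper's device is the opposite of yours: it writes $\omega_Y=\pi^*\omega_X+\delta\,\tfrac{i}{2\pi}\partial\bar\partial(c\log|S|^2)$ and picks $\delta>0$ \emph{small}. On the transition shell $\pi^*\omega_X$ is already strictly positive (there $\pi$ is an isomorphism onto an open set of $X\setminus 0$) and the correction is smooth and bounded since $S$ has no zeros or poles there, so small $\delta$ preserves positivity; near $E$, where $c\equiv 1$, the correction equals $\delta\Omega$ with $\Omega$ the positive curvature form of $L^{\otimes m}$, which supplies the positivity that $\pi^*\omega_X$ lacks in the directions $\pi$ collapses. If you want to salvage the potential-gluing route you would need to make the correction small rather than large, and you would still need the paper's $\Q$-factoriality construction (or an equivalent) to produce a section of an ample bundle whose logarithm can be cut off smoothly near the exceptional locus.
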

\begin{proof}
We first make an observation. Given any Cartier divisor $D$ on $Y$, the push-forward $\pi_*D$ is a Weil divisor on $X$, so $m\pi_*D$ is Cartier for some $m\in \N$. Let $f$ be a meromorphic function on $X$ such that $m\pi_*D=(f)$ near $0\in X$. Then $mD-(\pi^*f)$ is a Cartier divisor on $Y$ whose support
 intersects some open neighbourhood $U\subset Y$ of $\pi^{-1}(0)$ only in codimension one subvarieties contained in $\pi^{-1}(0)$.
The Cartier divisor yields a line bundle $\mathcal{O}(mD-(\pi^*f))$ on $Y$ with a meromorphic section\footnote{namely $S=s^{\otimes m}/\pi^*f$ where $s$ is a meromorphic section for the line bundle associated to $D$ with $D=(s)$.} $S$ whose only zeros and poles in $U$ lie in $\pi^{-1}(0)$.
%
%
%
%
%
%

As $Y$ is quasi-projective, we may pick a very ample line bundle $L\to Y$. Let $D$ denote \fix{a choice of associated Cartier divisor.}
The above argument yields a meromorphic section $S$ of $L^{\otimes m}$ whose only zeros and poles in a neighbourhood $U\subset Y$ of $\pi^{-1}(0)$ are contained in $\pi^{-1}(0)$.

As $L^{\otimes m}$ is very ample, we can choose a Hermitian metric $|\cdot|$ on $L^{\otimes m}$ such that the curvature $\Omega$ of the Chern connection determines a positive $(1,1)$-form
$\frac{i}{2\pi} \Omega = \frac{i}{2\pi}\partial \overline{\partial} \log |S|^2$ on $Y$ (using the fact that the latter is the Chern form for any non-zero meromorphic section $S$ of a holomorphic Hermitian line bundle).
%
%
%
%
%
Let $c: Y \to [0,1]$ be a smooth function, with $c=1$ near $\pi^{-1}(0)$ and $c=0$ outside of $U$. The claim follows by taking $\omega_Y=\pi^* \omega_X+\delta\frac{i}{2\pi}\partial \overline{\partial} (c\cdot \log |S|^2)$, and picking $\delta>0$ sufficiently small so that this is a positive form where $0<c<1$ \fix{(observe that our particular choice of section $S$ ensures that $\omega_Y$ is well-defined on $0<c<1$).} Note that $\omega_Y=\pi^*\omega_X$ outside of $U$, and $\omega_Y=\pi^*\omega_X+ \delta \frac{i}{2\pi}\Omega$ where $c=1$. 
\end{proof} 

\begin{remark} \label{Remark can assume piD is zero}
\fix{From the preceding proof, we obtain\footnote{\fix{After relabelling $L^{\otimes m}$, $S$ in the proof of Lemma \ref{lemma Q factorial resolution} by $L$ and $s$ respectively.}} a Weil divisor $D=(s)$ in $Y$ arising from a rational section $s$ of a very ample line bundle $L\to Y$, such that the only zeros and poles of $s$ in some neighbourhood $U\subset Y$ of $\pi^{-1}(0)$ lie in $\pi^{-1}(0)$. 
Thus $D=A+B$ decomposes into a Weil divisor $A$ supported in $\pi^{-1}(0)$, and a Weil divisor $B$ supported in $Y\setminus U$. 
We now show that one can construct $L$ and $s$ so that $B=0$ if one makes the additional assumption that Weil divisors of $X$ supported away from $0$ are torsion in $\mathrm{Cl}(X)$.
By the assumption, $m\cdot \pi_*B=(f)$ for some positive integer $m\geq 1$ and some
meromorphic function $f$ on $X$.
As $\pi$ is a biholomorphism over $X\setminus \{0\}$, the Weil divisor $mD-(\pi^*f)$ is supported in $\pi^{-1}(0)$. Therefore if we replace $L,s$ by $L^{\otimes m}$ and $s^{\otimes m}/\pi^*f$ respectively, we obtain $B=0$ above.}
\end{remark}

As normal varieties $X$ are smooth in codimension one, the
canonical bundle $\Lambda_{\C}^{\mathrm{top}}T^*X_{\mathrm{smooth}}$ defined on the smooth locus extends to a Weil divisor class $\mathcal{K}_X$ on $X$, the {\bf canonical divisor}.
%
%
A variety $X$ is {\bf quasi-Gorenstein} if $\mathcal{K}_X$ is Cartier, i.e.\;there is a line bundle $\omega_X$ 
%
%
%
which restricts to the canonical bundle on the smooth part $X_{\mathrm{smooth}}\subset X$. 
%
%
In particular $\C^n/G$, for finite subgroups $G\subset SL(n,\C)$, are quasi-Gorenstein, as $g\in G$ acts on $\Lambda^{\mathrm{top}}_{\C} T^*\C^n$ by $\det g=1$. (They are in fact Gorenstein, although we will not define this notion here).

For a birational morphism $\pi : Y \dashrightarrow X$, a closed codimension one subvariety $V\subset Y$ is an {\bf exceptional divisor} if $\pi(V)\subset X$ has codimension $\geq 2$ (i.e.\,the Weil divisor $\pi_*(V)=0$). The {\bf exceptional divisor of $\pi$} is the Weil divisor $\sum V_i$ summing over the exceptional $V_i$.

If $\pi$ is a regular birational morphism of quasi-Gorenstein varieties, and $V$ is exceptional and irreducible, then the {\bf discrepancy} of $V$ is $a_V=\mathrm{ord}_f(V)$ where $f=s_Y/\pi^*s_X$ is a rational section of $\omega_Y\otimes \pi^*\omega_X$ determined by a choice of non-zero rational sections $s_X,s_Y$ of $\omega_X,\omega_Y$ such that $\pi^*s_X$ and $s_Y$ agree in the region where $\pi$ is an isomorphism. 
$V$ is called a {\bf crepant divisor} if $\mathrm{ord}_f(V)=0$.
The {\bf total discrepancy} of $X$ is the infimum of the discrepancies over all such possible $V$ and $\pi:Y\to X$. The total discrepancy of a smooth variety $X$ is one \cite[Corollary 2.31]{kollarmori}.
%
The {\bf discrepancy divisor} is the Cartier divisor $\sum a_V \, V$ of $Y$, \fix{summing over irreducible exceptional divisors}. The discrepancies $a_V$ are in fact independent of the choices of $s_X,s_Y,\pi$, 
%
%
%
and $\pi$ is a {\bf crepant resolution} if all $a_V=0$, so $\pi^*\omega_Y=\omega_X$.

%
%
%
%
%
%
\begin{lemma}[Negativity Lemma]\label{Lemma negativity lemma}
 Let $\pi:Y \to X$ be a resolution, where $X$ has only one singular point at $0 \in X$. Let $D$ be a homologically trivial $\Q$-Cartier divisor in $Y$ with $\pi_*(D)=0$. Then $D=0$.
\end{lemma}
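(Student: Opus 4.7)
The plan is to reduce the statement to the classical negativity lemma of birational geometry. First I would observe that, since $\pi_*(D)=0$, every prime component of $D$ has image of codimension $\geq 2$ in $X$; as $0$ is the only singular point of $X$ and $\pi$ is an isomorphism elsewhere, the support of $D$ must lie inside the compact exceptional set $\pi^{-1}(0)$. So I would write $D=\sum a_i E_i$ with $a_i\in\Q$ and the $E_i$ the distinct prime exceptional divisors contained in $\pi^{-1}(0)$, reducing the task to showing all $a_i=0$.

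Next I would extract intersection data from the homological triviality. Choose $m\in\N$ with $mD$ Cartier, so that $L:=\mathcal{O}(mD)$ is a line bundle on $Y$ with $c_1(L)=0 \in H^2(Y,\R)$. Then for every compact curve $C\subset Y$,
\[
 m\,(D\cdot C) \;=\; \int_C c_1(L) \;=\; 0,
\]
hence $D\cdot C=0$. In particular this vanishing holds for every $\pi$-exceptional curve $C\subset\pi^{-1}(0)$, which shows that both $D$ and $-D$ are $\pi$-nef.

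I would then invoke the standard Negativity Lemma for proper birational morphisms of normal varieties (Koll\'ar--Mori \cite[Lem.~3.39]{kollarmori}): if $B$ is a $\Q$-Cartier divisor on $Y$ with $-B$ $\pi$-nef and $\pi_*B$ effective, then $B$ itself is effective. Applying this with $B=D$ (using $\pi_*D=0$) shows $D$ is effective, and applying it with $B=-D$ shows $-D$ is effective, so necessarily $D=0$.

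The main obstacle is justifying the appeal to the classical Negativity Lemma in the analytic setting of the appendix. A more self-contained alternative would be to cut $Y$ by $n-2$ sufficiently general very ample hyperplane sections to obtain a smooth projective surface $S\subset Y$ meeting every component of $\supp(D)$ transversely. The restriction $D|_S$ is then supported on the exceptional curves of the induced proper birational morphism $S\to\pi(S)$ onto a normal surface, and Mumford's classical theorem on the negative definiteness of the intersection form on exceptional curves of a resolution of a normal surface singularity forces $D|_S=0$; by genericity of the hyperplane sections one then concludes $D=0$. The delicate point in that route is verifying that $\pi(S)$ has only an isolated normal singularity so that Mumford's theorem applies, which is where I would spend most of the work.
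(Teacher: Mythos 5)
Your main argument is correct and is essentially the paper's proof: one observes that the homological triviality of $D$ makes $\pm D$ $\pi$-nef, that $\pi_*(\pm D)=0$ are (trivially) effective, and then applies the negativity lemma \cite[Lemma 3.39]{kollarmori} to both $\pm D$ to conclude $D=0$. Your preliminary reduction to exceptional support and the alternative route via surface sections are extra scaffolding the paper does not include, but the core argument is the same.
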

\begin{proof}
 We can apply the negativity lemma \cite[Lemma 3.39]{kollarmori} to the divisors $\pm D$ (\fix{they are both $\pi$-nef}, since they are homologically trivial, and $\pi_*(\pm D)$ are effective since zero). It follows that $\pm D$ are effective, therefore $D=0$.
%
%
%
%
\end{proof}

\begin{lemma}\label{Lemma c1 is zero for crepant}
Let $\pi:Y \to X$ be a resolution, where we assume $X$ has only one singular point at $0 \in X$ and $\mathcal{K}_X$ is $\Q$-Cartier. Then $Y$ is crepant if and only if $c_1(Y)|_U\! = \!0$ on a neighbourhood $U \!\subset\! Y$\,of $\pi^{-1}(0)$.
\end{lemma}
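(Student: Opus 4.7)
The plan is to reduce both directions to the vanishing of the discrepancy divisor $D$ on $Y$, whose definition extends to the $\Q$-Cartier case by picking $m\in\N$ with $m\mathcal{K}_X$ Cartier and setting $\mathcal{O}(mD):=m\mathcal{K}_Y\otimes \pi^*(m\mathcal{K}_X)^{-1}$. Shrink the neighbourhood $V$ of $0\in X$ so that $m\mathcal{K}_X|_V$ is holomorphically trivial (possible since any holomorphic line bundle is locally trivial at a point), and let $U=\pi^{-1}(V)$. Then $\pi^*(m\mathcal{K}_X)|_U$ is trivial, yielding an isomorphism of holomorphic line bundles $\mathcal{O}(mD)|_U \cong m\mathcal{K}_Y|_U$, and hence the identity
\[
c_1(\mathcal{O}(mD))|_U \;=\; -\,m\,c_1(Y)|_U \;\in\; H^2(U,\Z).
\]
In the forward direction, crepancy gives $D=0$, so $m\mathcal{K}_Y|_U$ is holomorphically trivial, forcing $m\,c_1(Y)|_U=0$ and hence $c_1(Y)|_U=0$ (rationally, and integrally in the Gorenstein case $m=1$ relevant to the paper).

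For the converse, I would apply the Negativity Lemma~\ref{Lemma negativity lemma} to $D$. The divisor is $\Q$-Cartier by construction, and $\pi_*(D)=0$ since $D$ is supported on $E=\pi^{-1}(0)$, which $\pi$ contracts to the codimension-$n\geq 2$ point $0\in X$. To verify the homological triviality hypothesis, it suffices---in the relative $\pi$-nef formulation used in the proof of the Negativity Lemma---to check that $D\cdot C=0$ for every compact curve $C\subset Y$ contracted by $\pi$. Since $\pi$ is a biholomorphism off $0$, any such $C$ lies in $E\subset U$, so
\[
mD\cdot C \;=\; c_1(\mathcal{O}(mD))|_U\cap [C] \;=\; -\,m\,c_1(Y)|_U\cap [C] \;=\; 0
\]
by the hypothesis on $c_1(Y)|_U$. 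The Negativity Lemma then forces $D=0$, proving that $\pi$ is crepant.

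The main obstacle is bridging local data to a global conclusion in the converse direction: knowing that $c_1(Y)$ vanishes only on a neighbourhood of $E$ must force the discrepancy divisor on all of $Y$ to be zero. This is resolved by the observation that $\pi$ being an isomorphism off $0$ confines every $\pi$-contracted compact curve to $E\subset U$, so the local Chern-class information suffices to control all intersection numbers needed by the Negativity Lemma.
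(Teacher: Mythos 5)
Your proof is correct and takes essentially the same approach as the paper: both directions reduce the statement to the vanishing of the discrepancy divisor on a neighbourhood of $\pi^{-1}(0)$, using a local trivialisation of $\omega_X$ near $0$ and then, for the converse, the Negativity Lemma. The only difference is a minor technicality: the paper applies Lemma~\ref{Lemma negativity lemma} to the restricted resolution $\pi|_U : U \to V$, where the ``homologically trivial on $U$'' hypothesis holds verbatim for the divisor $(\pi^*s_X)$, rather than applying it globally on $Y$ and observing (as you correctly do) that the proof only needs $\pi$-nefness, which is checkable on $U$.
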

\begin{proof}
Suppose first $\mathcal{K}_X$ is Cartier.
As $\pi$ is an isomorphism away from $\pi^{-1}(0)$, all exceptional divisors lie in $\pi^{-1}(0)$. As $\pi^{-1}(0)$ is of codimension one, the irreducible components $V_i$ of $\pi^{-1}(0)$ are the exceptional divisors, and $\sum V_i$ is the exceptional divisor of $\pi$. In the notation above, 
we may pick $s_X$ so that near $0\in X$ it is regular and nowhere-vanishing. If $\pi$ is crepant then $f=s_Y/\pi^*s_X$ has no zeros or poles along the $V_i$ so $s_Y$ is regular and nowhere-vanishing near $\pi^{-1}(0)$, so $\omega_Y$ is trivial near $\pi^{-1}(0)$ as required. Conversely, suppose $c_1(Y)|_U=0$. Near $0\in X$ we can pick a local nowhere vanishing section $s_X$ of $\omega_X$. Then $\pi^*s_X$ defines a rational section for $\omega_Y|_U$ (shrinking $U$ if necessary).
 By construction, the support of the divisor $(\pi^*s_X)$ on $U$ lies entirely in $\pi^{-1}(0)$. Then $c_1(Y)|_U=0$ implies $(\pi^*s_X)$ is homologically trivial on $U$. Lemma \ref{Lemma negativity lemma} implies $(\pi^*s_X)=0$ on $U$. Thus $\pi^*s_X$ is a regular nowhere vanishing section for $\omega_Y|_U$, as required (taking $s_Y=\pi^*s_X$ in the definition of crepant).
%
%
When $\mathcal{K}_X$ is $\Q$-Cartier, say $m\mathcal{K}_X$ is Cartier, one considers $f^{\otimes m}$, $\omega_Y^{\otimes m}$, $\omega_X^{\otimes m}$ instead of $f,\omega_Y$, $\omega_X$.
\end{proof}

\begin{proposition} \label{proposition lift of action}
Let $X$ be a $\Q$-factorial variety with only one singularity,
at $0 \in X$, admitting a regular \fix{$\C^*$-action $\mu:\C^* \times X \to  X$} which fixes $0$.
\fix{Assume that the Weil divisors of $X$ supported away from $0$ are torsion in $\mathrm{Cl}(X)$.}
Suppose $\pi : Y \to X$ is a crepant resolution.
Then $\mu$ lifts to a $\C^*$-action on $Y$, so that $\pi$ is a $\C^*$-equivariant morphism.
\end{proposition}
\begin{remark}
\fix{Let $X=\C^n/G$ for a finite subgroup $G\subset SL(n,\C)$ acting freely on $\C^n\setminus \{0\}$. Then $X$ with the standard $\C^*$-action satisfies the assumptions of Proposition \ref{proposition lift of action}. Indeed, given a Weil divisor $D$ in $\C^n/G$ supported away from $0$, we can define a Weil divisor $\widetilde{D}$ in $\C^n$ by picking a ``lift'' of $D$ via the quotient $\psi:\C^n \to \C^n/G$, meaning each subvariety $S$ arising in $D$ gets replaced by a choice of lift of $S$ via $\psi$. There are $|G|$ distinct choices of such a lift of $S$, and the lifted subvarieties are freely permuted by $G$. By construction, $\psi_*\widetilde{D}=D$.
Weil divisors in $\C^n$ are known to be principal,\footnote{\fix{More generally this holds whenever the coordinate ring is a unique factorization domain \cite[Prop.II.6.2]{Hartshorne}, which in our case is $\C[x_1,\ldots,x_n]$.}} so $\widetilde{D}=(\widetilde{f})$ for a meromorphic function $\widetilde{f}$ on $\C^n$. Observe that for any $g\in G$ the Weil divisor $(g^*\widetilde{f})$ also has push-forward $\psi_*(g^*\widetilde{f})=D$, since $G$ permutes the lifted subvarieties. Thus 
the averaged $G$-invariant meromorphic function $\sum_{g\in G} g^*\widetilde{f}$ on $\C^n$ descends to a well-defined meromorphic function $f$ on $\C^n/G$ with associated Weil divisor $(f)= |G|\cdot D$. Thus $|G|\cdot D$ is a principal divisor, so $D$ is torsion in $\mathrm{Cl}(X)$.}%
%
%
%

\fixtwo{The above argument can also be adapted to the situation when $G$ does not act freely on $\C^n\setminus \{0\}$, as follows. Let $S,D$ be as before. Recall that $\C^n/G$ is normal, so the singular set $\mathrm{Sing}(\C^n/G)$ has codimension at least two.
Therefore the intersection $S_{\mathrm{sing}}=S\cap \mathrm{Sing}(\C^n/G)$ has codimension at least one in $S$. As $G$ acts freely on $\C^n\setminus \psi^{-1}(\mathrm{Sing}(\C^n/G))$, there are $|G|$ choices of lifts of $S\setminus S_{\mathrm{sing}}$ to $\C^n$. We make one such choice of lift, and then we take the Zariski closure, call it $\widetilde{S}\subset \C^n$. Summing over $S$, the sum of these subvarieties $\widetilde{S}$ defines a Weil divisor $\widetilde{D}$ on $\C^n$ such that $\psi_*\widetilde{D}=D$. The remainder of the previous argument then holds verbatim, showing that $|G|\cdot D$ is principal, so $D$ is torsion in $\mathrm{Cl}(X)$.}
\end{remark}
\begin{proof}[Proof of Proposition \ref{proposition lift of action}]
We first lift the action over the smooth locus, to obtain a rational map
$\mu : \C^* \times Y \dashrightarrow Y.$
%
%
%
Let $\mu_w=\mu(w,\cdot) : Y \dashrightarrow Y$
be the restriction to $\{w\} \times Y$, for $w\in \C^*$.
The set of points at which $\mu_w$ is not regular is of codimension at least two (the proof of \cite[Chp.II.3 Thm.3]{Shafarevich}
%
%
applies to the quasi-projective non-singular variety $Y$). Let $Y'\subset Y$ be the locus where $\mu_w$ is regular. The differential $d\mu_w: T_{\C}Y'|_y \to T_{\C}Y|_{\mu_w(y)}$ induces a rational section $S=(\Lambda^{\mathrm{top}}_{\C} d\mu_w)^{\vee}$ of $\mathcal{L}=\omega_Y \otimes (\mu_w^*\omega_Y)^{\vee}$ 
%
%
which is regular over $Y'$. \fix{It is locally} the determinant of the Jacobian matrix for $\mu_w$. 
Suppose by contradiction that $\mu_w$ has an exceptional divisor $V$. 
%
%
%
%

Because $\pi: Y\setminus \pi^{-1}(0)\to X\setminus 0$ is a $\C^*$-equivariant isomorphism, it follows that $V \subset \pi^{-1}(0)$.
%
%
%
%
By construction, the section $S$ must vanish along $V$. 
The effective divisor $(S)$ on $Y'$ yields an effective divisor on $Y$ by taking the closure (recall $\mathrm{codim}\, Y\setminus Y'\geq 2$). 
As before, its support lies in $\pi^{-1}(0)$.
If $(S)$ were null-homologous on $U$ then, since $\pi_*(\pm(S))=0$, Lemma \ref{Lemma negativity lemma} would imply $(S)=0$ on $U$, contradicting that $(S)$ involves a strictly positive multiple of $V$. Therefore $(S)$ is not null-homologous. This implies that $c_1(\mathcal{L})|_U\neq 0$. Finally, we check that this is false. As in the proof of Lemma \ref{Lemma c1 is zero for crepant}, it suffices to consider the case when $\mathcal{K}_X$ is Cartier (if $m\mathcal{K}_X$ is Cartier we consider the bundle $\mathcal{L}^{\otimes m}$ etc.). Let $s_X$ be a rational section of $\omega_X$ that is regular and nowhere zero near $0\in X$. Then $s_X \cdot (\underline{\mu}_w^*s_X)^{\vee}$ is a rational section of $\omega_X \otimes (\underline{\mu}_w^*\omega_X)^{\vee}$ where $\underline{\mu}_w$ is the $\C^*$-action on $X$. As $\pi$ is $\C^*$-equivariant, $\sigma=\pi^*s_X \cdot (\mu_w^*\pi^*s_X)^{\vee}$ is a rational section of $\mathcal{L}$. \fix{As $\pi$ is crepant}, $s_Y/\pi^*s_X$ has trivial orders of vanishing near $\pi^{-1}(0)$, so $\sigma$ trivialises $\mathcal{L}$ near $\pi^{-1}(0)$, thus $c_1(\mathcal{L})|_U=0$ for a neighbourhood $U$ of $\pi^{-1}(0)$, the required contradiction.
%
%

Thus $\mu_w$ has no exceptional divisors for all $w \in \C^*$.
%
%
%

%
%
\fix{By Remark \ref{Remark can assume piD is zero} (and using the assumption about Weil divisors) we can construct a very ample line bundle $L$ on $Y$ with a rational section $s$ whose only zeros and poles lie in $\pi^{-1}(0)$, in particular $\pi_*(D)=0$ as $\pi$ collapses the divisors in $\pi^{-1}(0)$. From now on, by divisor we mean the equivalence class of the divisor.}

As $\mu_w$ has no exceptional divisors,
the $D_w = (\mu_w)_*(D)$ define a smooth family of Weil divisors parameterized by $w \in \C^*$. In particular all $D_w$ are homologous. Also note that $\pi_*(D_w)=0$, since $\pi_*D=0$ and $\pi$ is $\C^*$-equivariant.

Abbreviate $D_{w,w'}=D_w-D_{w'}$ for any $w,w'\in \C^*$. As both $\pm D_{w,w'}$ are null homologous and $\pi_*(\pm D_{w,w'})=0$, Lemma \ref{Lemma negativity lemma} implies
$D_{w,w'}=0$, so $D_w = D_1$ for all $w \in \C^*$.

Recall that a section of $L$ is equivalent to
a rational function $f$ such that
$(f) + D$ is effective.
%
%
%
Since $D_w = D$ for all $w \in \C^*$, we deduce that
$\mu_w$ pulls back sections of $L$
to sections of $L$ by pulling back its respective rational functions.
Thus we have an action
$\mu^* : \C^* \times H^0(L)^* \to H^0(L)^*$
and each map $\mu^*_w = \mu^*|_{\{w\} \times H^0(L)^*} : H^0(L)^* \to H^0(L)^*$ is linear.
As $L$ is very ample, it is also relatively very ample (i.e.\;the restriction to fibers of $\pi$ is very ample),
%
%
so we have a natural embedding
$\iota: Y \to \mathrm{Proj}(\oplus_{k\geq 0} H^0(L^{\otimes k})^*)$
%
%
%
%
%
%
%
%
 and the induced action of $\mu^*$ on $H^0(L^{\otimes k})^*$ preserves $\text{image}(\iota)$
and its restriction to $Y$ is $\mu$.
This extends $\mu$ to an action
$\C^* \times Y \to Y$ compatibly with the $\C^*$-action on $X$ via projection.
\end{proof}
%
\section{Appendix B: Equivariant symplectic cohomology}
\subsection{Classical $S^1$-equivariant cohomology}
\label{Subsection Classical S1-equivariant cohomology}
\fix{Recall that $\K$ is the Novikov field from \eqref{Equation Novikov field}, and (co)homology is computed with coefficients in $\K$ unless indicated otherwise.}
For a topological space $X$ with an $S^1$-action, $H^*_{S^1}(X)=H^*(ES^1\times_{S^1} X)$ is a module over $H_{S^1}^*(\mathrm{point})=H^*(BS^1)$ by applying the functor $H^*_{S^1}(\cdot)$ to $X\to \mathrm{point}$. We take $ES^1=S^{\infty}$ to be the direct limit of $S^1\subset S^3\subset S^5\subset  \cdots$ where $S^{2n-1}\subset \C^n$, then $BS^1=ES^1/S^1=\C\P^{\infty}$ and we identify $H^*(\C\P^{\infty})=\K[u]$, with $u$ in degree $2$. So $H^*_{S^1}(X)$ is naturally a $\K[u]$-module. 

Similarly, $H_*^{S^1}(X)=H_*(ES^1\times_{S^1}X)$ admits a cap product action $u:H_*^{S^1}(X)\to H_{*-2}^{S^1}(X)$ making $H^{S^1}_{-*}(X)$ a $\K[u]$-module (notice the negative grading). We can identify $H^{S^1}_{-*}(\mathrm{point})=H_{-*}(\C \P^{\infty})\cong \K[u^{-1},u]/u\K[u]$ as $\K[u]$-modules, 
where $u^{-j}$ represents the class $[\C\P^j]$ graded negatively.
Equivalently, completing in $u$, we can view them as $\K[\![u]\!]$-modules
$$
\F = \K(\!(u)\!)/u\K[\![u]\!] \cong H_{-*}(\C\P^{\infty}),
$$
where $\K[\![u]\!]$ \fix{and} $\K(\!(u)\!)=\K[\![u]\!][u^{-1}]$ are respectively formal power series and Laurent series. 
\\[1mm]
\indent {\bf Motivation.}\,\emph{One wants an equivariant Viterbo theorem \cite{Viterbo}: for closed oriented spin $N$, we want a $\K[\![u]\!]$-module isomorphism $ESH^*(T^*N)\cong H_{n-*}^{S^1}(\mathcal{L}N)$ $($using the natural $S^1$-action on the free loop space $\mathcal{L}N=C^{\infty}(S^1,N))$, compatibly with the inclusion of constant loops $EH^*(T^*N)\cong H_{n-*}^{S^1}(N)\to H_{n-*}^{S^1}(\mathcal{L}N)$ via $c^*:EH^*(T^*N)\to ESH^*(T^*N)$ $($the equivariant analogue of the canonical map $c^*:H^*(T^*N) \to SH^*(T^*N))$. As the $S^1$-action on constant loops is trivial, $H_{-*}^{S^1}(N)\cong H_{-*}(N)\otimes H_{-*}(\C\P^{\infty})\cong H_{-*}(N)\otimes_{\K} \F$.}
\subsection{$S^1$-complexes and equivariant symplectic cohomology}
\label{Subsection S1complexes and equivariant symplectic cohomology}
%
Let $C^*=CF^*(H)$ be a Floer chain complex used in the construction of symplectic cohomology $SH^*(M)$ (for $M$ as in Sec.\ref{Subsection convex symplectic manifolds}).
Following Seidel \cite[Sec.(8b)]{Seidel}, $C^*$ admits degree $1-2k$ maps 
$$\delta_k: CF^*(H)\to CF^*(H)[1-2k]$$
for $k \in \N$, where $\delta_0$ is the usual Floer differential
%
%
 and $\sum_{i+j=k} \delta_i \circ \delta_j=0$. In general such data $(C^*,\delta_k)$ is called an {\bf $S^1$-complex}, and we recall the specific Floer construction of $\delta_k$ later.

Given an $S^1$-complex $C^*$, we define the {\bf equivariant complex} by
\begin{equation}\label{Equation equivariant d}
EC^* = C^*\otimes_{\K} \F, \qquad d=\delta_0 + u\delta_1+u^2\delta_2+\cdots
\end{equation}
so $\K$-linearly extending 
\fix{$$d(y u^{-j})=\sum u^{k-j}\delta_k(y)=\delta_0(y)u^{-j}+\delta_1(y)u^{-j+1}+\cdots+\delta_j(y)u^0.$$}
Notice $d$ is naturally a $\K[\![u]\!]$-module homomorphism (but not for $\K[u^{-1}]$),
%
%
%
 in particular $u$ acts by zero on $u^{0}C^* $. The resulting cohomology $EH^*$ is a $\K[\![u]\!]$-module.
 
For the Floer complexes, the direct limit $ESH^*(M)$ of the equivariant Floer cohomologies $EHF^*(H)$ over the class of Hamiltonians admits a canonical $\K[\![u]\!]$-module homomorphism
\begin{equation}\label{Equation canonical equation in equivariant setup}
c^*:EH^*(M)\cong H^*(M)\otimes_{\K} \F \to ESH^*(M),
\end{equation}
where $EH^*(M)$ arises from the Morse-theoretic analogue of the construction 
\eqref{Equation equivariant d} for the $1$-orbits of a $C^2$-small Hamiltonian (these are constant orbits, so involve a trivial $S^1$-action).
%
%
\subsection{Construction of the $\delta_k$ in Floer theory}
\label{Subsection The construction of the deltak in Floer theory}
We follow work of Viterbo \cite[Sec.5]{Viterbo} and \fix{Seidel \cite[Sec.(8b)]{Seidel}}, and for details we refer to Bourgeois-Oancea \cite[Sec.2.3]{Bourgeois-Oancea-S1}. The function 
\begin{equation}\label{Equation f on CPinfinity}
\fix{
\textstyle f:S^{\infty}\to \R, \qquad  f(z)=\sum_{j=1}^{\infty} j |z_j|^2
}
\end{equation}
 induces a Morse function on $\C\P^{\infty}=S^{\infty}/S^1$ with critical points $c_0,c_1,c_2,\ldots$ \fix{in degrees $0,2,4,\ldots$. One} picks a connection on the $S^1$-bundle $S^{\infty}\to \C\P^{\infty}$ that is trivial near all $c_i$ (in a chosen trivialisation of the bundle near each $c_i$). This induces a connection on 
 \begin{equation}\label{Equation E S1 LM} 
 E=S^{\infty}\times_{S^1} \mathcal{L}M \to \C \P^{\infty},
 \end{equation}
 using the natural $S^1$-action on the free loop space $\mathcal{L}M=C^{\infty}(S^1,M)$. One picks a family of Hamiltonians $H_z:M\to \R$ parametrized by $z\in \C\P^{\infty}$ such that locally near $c_i$ the $H_z = h_i$ are some fixed Hamiltonians $h_i: M \to \R$. 
Similarly, one picks generic almost complex structures $J_z$ on $E_z$ that locally near $c_i$ are some fixed $J_i$ on $M$.  It is understood that all Hamiltonians \fix{$h_i$} and almost complex structures \fix{$J_i$} must be of the type allowed by the construction of $SH^*(M,\omega)$ \fix{(so generic time dependent perturbations are tacitly understood)}, and in a neighbourhood $V$ of infinity we require $H_z$ to be radial of the same slope as the given $H$ (so Floer solutions will stay in the compact region $M\setminus V$ by a maximum principle).

If we do not work with a Morse-Bott model, then the given Hamiltonian $H$ has to have been time-dependently perturbed, say $H=H(t,\cdot)$, so as to ensure that Hamiltonian $1$-orbits are non-degenerate. The $H_z$ must then be $S^1$-equivariant: $H_{e^{i\tau}z}(t,\cdot) = H_z(t-\tau,\cdot)$, and similarly for the $J_z$.

We now count pairs $(w,v)$, 
$$
w:\R\to \C\P^{\infty} \qquad \qquad v:\R \to E
$$
where $w$ is a $-\nabla f$ flowline for the Fubini-Study metric on $\C\P^{\infty}$, and $v$ is a lift of $w$ which satisfies the Floer equation $\frac{Dv}{ds} + J_z(\frac{Dv}{dt} - X_{H_{w(s)}})=0$, where the derivatives are induced by the connection on $E$.
More precisely, one fixes asymptotics $c_{i_-},c_{i_+}$ for $w$ and asymptotic
$1$-orbits $x_-,x_+$ for $v$ for the Hamiltonians $h_{i_-},h_{i_+}$, and the moduli space $\mathcal{M}(c_{i_-},x_-; c_{i_+},x_+)$ consists of the rigid solutions $[(w,v)]$ modulo the natural $\R$-action in $s$. 

The shift $\sigma:\C\P^{\infty}\to \C\P^{\infty}$, $(z_0,z_1,z_2,\ldots)\mapsto (0,z_0,z_1,\ldots)$ is compatible with the Fubini-Study form and $\nabla f$ (as $\sigma^*f=f+1$), so we may pick all data compatibly with the natural lifted action $\sigma: E\to E$. 
\fix{So for each $i$, $h_i=H$ and $J_i=J$. The} moduli spaces can be naturally identified if we add the same positive constant to both $i_-,i_+$. Define $\delta_k$ as the $\K$-linear extension of
$$
\delta_k(y) = \sum \# \mathcal{M}(c_k,x; c_0,y) \cdot x
$$
summing over the $1$-orbits $x$ of $H$ for which the moduli spaces are rigid, and $\#$ denotes the algebraic count with orientation signs (and Novikov weights, if present).

Then $d=\delta_0 + u \delta_1 + u^2 \delta_2+\cdots$ operates on 
$
CF^*(H)\otimes_{\K} C_{-*}(\C \P^{\infty}) \cong CF^*(H) \otimes_{\K} \F
$
using the above Morse model for $\C\P^{\infty}$, so viewing the formal variable
$u^{-j}$ as playing the role of $c_j$ (equivalently $[\C\P^{j}]\in H_{-*}(\C\P^{\infty})$ negatively graded with $u$ acting by cap product). \fix{For a detailed description of the Morse-Bott construction of the differentials, we refer to Seidel \cite[Sec.(8b)]{Seidel}, Bourgeois - Oancea \cite{Bourgeois-Oancea,BourgeoisOanceaGysin}, and Kwon - van Koert \cite[Appendix B]{KwonvanKoert}.}%
%
%
\subsection{The $u$-adic spectral sequence}
Following Seidel \cite[Sec.(8b)]{Seidel}, the $u$-adic filtration is bounded below and exhausting, so it gives rise to a spectral sequence converging to $ESH^*(M)$ with $E_1^{**} = H^*(C^*,\delta_0) \cong SH^*(M)\otimes_{\K}\F$. Dropping $u^{-j}$ to avoid confusion, as this is only a spectral sequence of $\K$-vector spaces, and adjusting gradings,\footnote{Abbreviating the total degree by $k=p+q$, the filtration is $F^p (EC^k) = C^{k-2p}u^p + C^{k-2p-2}u^{p+1} + \cdots$, which vanishes for $p>0$, and $E_0^{pq}=F^p (EC^k)/F^{p+1}(EC^k)\cong C^{k-2p}u^p$ with $d_0^{pq}=\delta_0$, so $E_1^{pq}\cong SH^{k-2p}(M)$.}
\begin{equation}\label{Equation spectral sequence u adic filtration}
E_1^{pq} \Rightarrow ESH^*(M), \textrm{ where }
\fix{E_1^{pq} = SH^{q-p}(M)} \textrm{ for }p\leq 0, \textrm{ and }E_1^{pq} =0\textrm{ for }p>0.
\end{equation}
%
%
%
\subsection{Gysin sequence}
\label{Subsection Gysin sequence}
Following Bourgeois-Oancea \cite{BourgeoisOanceaGysin}, any $S^1$-complex $(C^*,\delta_k)$ admits a short exact sequence $0 \to C^*  \stackrel{\mathrm{in}}{\longrightarrow}  EC^* \stackrel{u}{\longrightarrow}  EC^{*+2} \to 0$ using the  natural inclusion of $C^*$ as $u^0C^*$ (recall the differential $\delta_0$ on $C^*$ agrees with $d$ on $u^0C^*$), and using the $\K[\![u]\!]$-module action by $u$ on $EC^*$. The induced long exact sequence is called {\bf Gysin sequence}, 
\begin{equation}\label{Equation Gysin sequence explicitly}
\cdots \longrightarrow
H^* \stackrel{\mathrm{in}}{\longrightarrow} 
EH^* \stackrel{u}{\longrightarrow} 
EH^{*+2} \stackrel{b}{\longrightarrow} 
H^{*+1} \longrightarrow \cdots 
\end{equation}
The boundary map $b$ is induced by the maps 
$b(yu^{-j})=\delta_{j+1}(y)$
for $j\geq 0$, since that is the $u^0$-term of $d$ applied to the preimage $yu^{-j-1}$ of $yu^{-j}$ under multiplication by $u$.
%
%

In our setup above, this exact sequence becomes
$$
\cdots \longrightarrow
HF^*(H) \stackrel{\mathrm{in}}{\longrightarrow} 
EHF^*(H) \stackrel{u}{\longrightarrow} 
EHF^{*+2}(H) \stackrel{b}{\longrightarrow} 
HF^{*+1}(H) \longrightarrow \cdots 
$$
then taking the direct limit over continuation maps yields
$$
\cdots \longrightarrow
SH^*(M) \stackrel{\mathrm{in}}{\longrightarrow} 
ESH^*(M) \stackrel{u}{\longrightarrow} 
ESH^{*+2}(M) \stackrel{b}{\longrightarrow} 
SH^{*+1}(M) \longrightarrow \cdots 
$$
The positive symplectic cohomology version is analogous, yielding \eqref{Equation Gysin for SH+}.
\begin{remark} For context, the classical Gysin sequence for an $S^1$-bundle $\pi:E \to M$ is
\begin{equation}\label{Equation Gysin sequence classical case}
\cdots \to H_*(E) \stackrel{\pi_*}{\longrightarrow} H_*(M)\stackrel{\cap e}{\longrightarrow} H_{*-2}(M) \longrightarrow H_{*-1}(E) \to \cdots
\end{equation}
%
%
where the middle arrow is cap product with the Euler class of the bundle. Now consider the free loop space $\mathcal{L}N$. \fix{If $M=\mathcal{L}N\times_{S^1} S^{\infty}$} (and $E=\mathcal{L}N\times S^{\infty}$), then \eqref{Equation Gysin sequence classical case} becomes
$$
\cdots \to H_*(\mathcal{L}N) \stackrel{}{\to} H_*^{S^1}(\mathcal{L}N) \stackrel{}{\to} H_{*-2}^{S^1}(\mathcal{L}N) \to H_{*-1}(\mathcal{L}N)\to \cdots
$$
%
so resembles the Gysin sequence \eqref{Equation Gysin sequence explicitly} via the Viterbo isomorphism $H_*(\mathcal{L}N)\cong SH^{n-*}(T^*N)$.
\end{remark}
\subsection{Construction of the $\delta_k$ in Morse theory}
For a $C^2$-small Morse Hamiltonian $H$, taking $J_z=J$ and $H_z=H$, the Floer theory in Sec.\ref{Subsection The construction of the deltak in Floer theory} reduces to Morse theory: the Hamiltonian orbits and the Floer solutions become time-independent, so $v:\R \to E$ will solve $\frac{Dv}{ds}=-\nabla H_z$ using the Riemannian metric $g_z=\omega(\cdot,J_z\cdot)$ on $M$ over $z$. 
As the two equations for $(w,v)$ have decoupled, this gives rise to the isomorphism $EH^*(M)\cong H^*(M)\otimes_{\K}H_{-*}(\C\P^{\infty})$. 

Let $X$ be an oriented closed manifold with an $S^1$-action and a Morse function $H:X\to \R$, or a convex symplectic manifold (Sec.\ref{Subsection convex symplectic manifolds}) with an $S^1$-action with a $C^2$-small Morse Hamiltonian $H$ radial at infinity with positive slope (so $-\nabla H$ is inward pointing at infinity).

Then replacing $E$ in \eqref{Equation E S1 LM} by $E=S^{\infty}\times_{S^1} X$ gives a Morse theory analogue of Sec.\ref{Subsection The construction of the deltak in Floer theory}, and via Sec.\ref{Subsection S1complexes and equivariant symplectic cohomology} yields a $\K[\![u]\!]$-module $EH^*(X)$. One can identify the Morse complex with the associated complex of pseudo-manifolds obtained by taking the stable manifold $W^s(p)$ of each critical point $p$ with grading $|W^s(p)|=2n-|p|$ (recall this is how one can classically prove that Morse cohomology recovers locally finite homology, and thus the ordinary cohomology by Poincar\'{e} duality). It follows that there is an isomorphism to $S^1$-equivariant lf-homology,
\begin{equation}\label{Equation EH as S^1 lf theory}
EH^*(X) \cong H^{\mathrm{lf},S^1}_{2n-*}(X).
%
%
%
\end{equation}
When $X$ is a closed manifold, $H^{\mathrm{lf},S^1}_{2n-*}(X)=H^{S^1}_{2n-*}(X)$.

When the $S^1$-action is trivial, $EH^*(X)\cong H^{\mathrm{lf}}_{2n-*}(X)\otimes_{\K} \F \cong H^*(X)\otimes_{\K}\F$ (in Sec.\ref{Subsection Classical S1-equivariant cohomology}
we had $EH^*(T^*N)\cong H^*(T^*N)\otimes_{\K} \F$ and by Poincar\'{e} duality $H^*(T^*N)\cong H^*(N)\cong H_{n-*}(N)$).
%
%
%
%
\begin{remark} \eqref{Equation EH as S^1 lf theory} is not $H_*^{S^1}(X)$ because Poincar\'{e} duality in the equivariant setup is only well-behaved using lf-homology, and the $\K[u]$-actions by cup product on $H^*(\C\P^{\infty})$ and by cap product on $H_*(\C\P^{\infty})$ are substantially different.
\end{remark}
\subsection{Equivariant cohomology for $S^1$-actions with finite stabilisers}
%
\begin{theorem}\label{Theorem EH for free action}
\fix{Let $X$ be a closed oriented smooth manifold with a free $S^1$-action. Let $e$ denote the Euler class of the $S^1$-bundle $X\to X/S^1$. There are $\K[\![u]\!]$-module isomorphisms}
$$EH^*(X) \cong \fix{H_{2n-*}(X/S^1)} \cong H^{*-1}(X/S^1),$$
\fix{where the second isomorphism is Poincar\'{e} duality for $X/S^1$, where $u$ acts on $H_*(X/S^1)$ by cap product by $-e$, and $u$ acts on $H^*(X/S^1)$ by cup product by $-e$.}

This result holds more generally if the $S^1$-action has finite stabilisers, provided the characteristic of the underlying field of coefficients is coprime to the sizes of the stabilisers.
%
%
%
\end{theorem}
\begin{proof}

Let $E=(S^{\infty}\times X)/S^1$ with $S^1$ acting by $(z,x)\mapsto (ze^{it},e^{-it}x)$. The natural projection map $p:E\to X/S^1$ is a homotopy equivalence since the fibres $S^{\infty}$ are contractible. By naturality, the following $S^1$-bundles have the Euler classes labelled on the vertical maps
$$
\xymatrix@R=14pt{
 S^{\infty}
 \ar@{->}^{u}[d] \ar@{->}^{}[r] 
 &
  S^{\infty} \times X
 \ar@{->}^{u-e}[d] \ar@{->}^-{\simeq}[r] 
 &
  X
 \ar@{->}^{-e}[d]  \\
 \C\P^{\infty}
  \ar@{->}^{}[r] 
 &
  E=S^{\infty}\times_{S^1}X 
 \ar@{->}^-{\simeq}[r] 
 &
  X/S^1
}
$$
where on $X$ we have reversed the $S^1$-action.
By naturality of the classical Gysin sequence \eqref{Equation Gysin sequence classical case}, this implies that the cap product action of $u$ on $H_*^{S^1}(X)=H_*(E)$ is identified with cap product by $-e$ on $H_*(X/S^1)$ via  $p_*:H_*(E)\cong H_*(X/S^1)$.
The claim follows upon identifying the classical Gysin sequence \fix{\eqref{Equation Gysin sequence classical case} with \eqref{Equation Gysin sequence explicitly}, by first using \eqref{Equation EH as S^1 lf theory} to identify }
$$
\fix{EH^*(X)\cong H_{2n-*}^{\mathrm{lf},S^1}(X) = H_{2n-*}^{S^1}(X) = H_{2n-*}(E) \cong H_{2n-*}(X/S^1),}$$
\fix{and then applying Poincar\'e duality: $H_{2n-*}(X/S^1)\cong H^{2n-1-(2n-*)}(X/S^1)=H^{*-1}(X/S^1)$.}
%
%
\fix{The final claim, about the case of finite stabilisers, is proved analogously but requires three technical lemmas to justify why the above techniques also work when $X/S^1$ has finite quotient singularities. We prove those lemmas separately, below.}
\end{proof}
%
%
\fix{Before proving the lemmas required to justify the final part of Theorem \ref{Theorem EH for free action}, we recall the following motivation behind the assumption on the coefficients.}
\begin{remark}\label{Remark cohomology of finite quotients}
For any quotient $M/G$ of a Hausdorff topological space $M$ by a finite group action $G$ (not necessarily acting freely), the projection $p: M\to M/G$ induces an isomorphism $$p^*:H^*(M/G)\cong H^*(M)^G\subset H^*(M)$$ over any field of characteristic coprime to $|G|$, where $H^*(M)^G$ \fix{is the group of $G$-invariant elements.} When the action is free, this follows by considering the \fix{classical transfer homomorphism $C_*(M/G) \to C_*(M)$ which sends a singular simplex $\sigma$ in $M/G$ to the sum $\sum_{g\in G} g(\widetilde{\sigma})$ of the possible lifts of $\sigma$ to $M$ (where $\widetilde{\sigma}$ is any choice of such a lift).} 
%
%
%
In the non-free case one needs to consider an analogous map acting on the sheaf of coefficients, this is proved for example in \cite[Corollary to Prop.5.2.3]{Grothendieck} or \cite[Thm.II.19.2]{Bredon}. \fix{The result also holds for cohomology with compact supports.\footnote{\fix{In \cite[Thm.II.19.2]{Bredon}, we use the family of supports on $M/G$ given by compact subsets, whose preimage in the sense of \cite[Definition I.6.3]{Bredon} is the family of compact supports in $M$ since $M\to M/G$ is a proper continuous map (which in turn follows from the fact that it is a closed continuous map with compact fibres).}} 
%
%
If we assume in addition that $M$ is locally compact, then the homology version holds:\footnote{\fix{By \cite[Discussion below Proposition V.19.2]{Bredon} (and \cite[Paragraph above Sec.V.2]{Bredon}) one can construct a transfer map on locally finite homology, $\mu_*:H_*^{\mathrm{lf}}(X/G) \to H_*^{\mathrm{lf}}(X)$ such that $\mu_*\circ p_*=\sum g_*$ is the averaging operator by the $G$-action, and $p_*\circ \mu_*$ is multiplication by $|G|$. Under our assumptions on the coefficients, $p_*\circ \mu_*$ is an isomorphism, and it follows that $\mu_*$ is an injection onto the $G$-invariant classes.}} there is an isomorphism $H_*^{\mathrm{lf}}(X/G) \cong H_*^{\mathrm{lf}}(X)^G \subset H_*^{\mathrm{lf}}(X)$ induced by a transfer map (and when $M$ is compact, recall that lf-homology is just ordinary homology).}
%
%
%
%
%
\end{remark}
\fix{In the following lemmas, we assume that $X$ is a closed oriented smooth manifold with an $S^1$-action whose stabilisers are finite, and that (co)homology is taken with coefficients in a field whose characteristic is coprime to the sizes of the stabilisers. We again consider the natural projection $p:E_X\to X/S^1$, where $$E_X=(S^{\infty}\times X)/S^1$$ with
$S^1$ acting by $(z,x)\mapsto (ze^{it},e^{-it}x)$ on $S^{\infty}\times X$. The fibres are $p^{-1}(x)=S^{\infty}/\mathrm{Stab}(x)$, so under the assumption on the field $\K$ of coefficients the fibres have trivial cohomology:
\begin{equation}\label{Equation fibers are cohomologically contractible}
H^*(p^{-1}(x))=H^*(S^{\infty}/\mathrm{Stab}(x))\cong H^*(S^{\infty})=\K\cdot 1 \cong \K
\end{equation}
living in degree zero, by Remark \ref{Remark cohomology of finite quotients}.
\begin{lemma}\label{Lemma MV argument for fibration}
The quotient map induces an isomorphism $p_*:H_*(E_X)\cong H_*(X/S^1)$.
\end{lemma}
\begin{proof}
We will prove the statement by an inductive Mayer-Vietoris argument \cite[Sec.5]{BottTu}, by inducting on a cover of $X/S^1$. The inductive step is the following. Assume that $U,V$ are open subsets of $X/S^1$ such that the claim holds for $X$ replaced by any of $U$, $V$ or $U\cap V$. Then we can prove the claim for $X'=U\cup V$, as follows. By naturality, the two Mayer-Vietoris sequences for the open covers $X'=U\cup V$ and $X'/S^1=U/S^1 \cup V/S^1$ fit into a commutative diagram via the map $p_*$. By the assumption and the five-lemma \cite[Exercise 5.5]{BottTu}, $p_*: H_*(X')\to H_*(X'/S^1)$ is also an isomorphism, as required. We now build the cover.
\\
\indent
Observe that $X=\cup_{n\geq 1}X_n$ can be stratified by considering the sizes of the stabilisers (which are cyclic subgroups of $S^1$), by defining
$$
X_n=\{x\in X: |\mathrm{Stab}(x)|=n\}.
$$
Abbreviate $X_{\leq n}=\cup_{m\leq n} X_m$.
Fixing $n$, a sequence of points in $X_n$ cannot converge to a point in $X_m$ for $m<n$, by a continuity argument. So $X_n\subset X_{\leq n}$ is a closed subset, and $X_{\leq n}\subset X$ is an open subset.
\\
\indent
For any subset $Y\subset X$ on which $S^1$ acts freely, the natural projection map $p:E_Y\to Y/S^1$ is a homotopy equivalence since the fibres $S^{\infty}$ are contractible, so the claim holds for $Y$.
For example, this applies to the case $Y=X_1$.
\\
\indent
We claim that $X_n\subset X$ is a smooth submanifold. Pick any Riemannian metric on $X$. By an averaging argument, we may assume that the Riemannian metric is $S^1$-invariant. The exponential map and the $S^1$-action $\psi_t$ for time $t\in S^1=\R/\Z$ therefore satisfy
\begin{equation}\label{Equation exp preserves flow}
\psi_t\circ \exp_p =
\exp_{\psi_t(p)} \circ \, d_p\psi_t.
\end{equation}
Suppose a point $p\in X$ is fixed by $\psi_t$. Consider the chart near $p\in X$ given via $\exp_p:T_p X \to X$ in a neighbourhood $N$ of $0\in T_p X$. The induced $S^1$-action on $N$ becomes\footnote{\fix{\,$\exp_p^{-1}\circ (\psi_t \circ \exp_p) = 
\exp_p^{-1}\circ (\exp_{p} \circ \, d_p\psi_t) = 
d_p\psi_t$, using \eqref{Equation exp preserves flow} and $\psi_t(p)=p$.}} the linear action by $d_p\psi_t$.
%
%
%
%
It follows that $X_n$ correponds locally via $\exp_p$ to the \emph{linear} subspace of $T_p X$ of vectors that have stabiliser of size $n$, therefore $X_n\subset X$ is a smooth submanifold. \fixtwo{In particular, $X_n$ is a submanifold of the open submanifold $X_{\leq n}\subset X$.}
\\
\indent
We now consider the inductive Mayer-Vietoris argument for $X_{\leq 2}=U\cup V$, where $U=X_1$ and $V$ is an open $S^1$-invariant tubular neighbourhood of $X_2\subset X_{\leq 2}$ (we apply the exponential map to a neighbourhood of the zero section of the normal bundle of $X_2\subset X_{\leq 2}$). The claim holds for $U$ and $U\cap V$ since the $S^1$ action is free there, so once we prove the claim for $V$ we deduce it also for $X_{\leq 2}$. By construction, $V$ deformation retracts $S^1$-equivariantly onto $X_2$, so it remains to prove the claim for $X_2$. For $X_2$, we can quotient the $S^1$ action by $\Z/2$ (without affecting the claim), which reduces us again to the known case of a free $S^1$ action.
\\
\indent
By induction, we can assume that the claim is known for open manifolds for which the stabilisers are at most of size $n-1$, and we now prove it for $X_{\leq n}=U \cup V$ taking $U=X_{\leq n-1}$ and $V$ an $S^1$-invariant tubular neighbourhood of $X_n\subset X_{\leq n}$. By induction the claim holds for $U$ and $U\cap V$, so we reduce to proving it for $V$. It suffices to prove it for $X_n$ since we can $S^1$-equivariantly deformation retract $V$ onto $X_n$. For $X_n$ we may use the quotiented action by $S^1/(\Z/n)$, which is a free action, therefore the claim holds as required.
\end{proof}
\begin{lemma}
The quotient map $X \to X/S^1$ admits a Gysin sequence \eqref{Equation Gysin sequence classical case} which corresponds to the Gysin sequence for the circle bundle $S^{\infty}\times X \to E_X$ via the projection isomorphism $p_*$. In particular, there is a well-defined Euler class in $H^2(X/S^1)$ which agrees with the usual Euler class in $H^2(X_1/S^1)$ over the locus $X_1\subset X$ where $S^1$ acts freely.
\end{lemma}
\begin{proof}
One approach is to define the Gysin sequence for $X\to X/S^1$ as the Gysin sequence for the circle bundle $S^{\infty}\times X \to E_X$ after applying the projection isomorphisms $H_*(S^{\infty}\times X)\cong H_*(X)$ and $H_*(E_X)\cong H_*(X/S^1)$ (using Lemma \ref{Lemma MV argument for fibration}). One can alternatively construct a Gysin sequence directly by applying the inductive argument as in the previous proof as follows, by using the fact that Gysin sequences are natural with respect to maps of spaces. The classical Gysin sequence applies to any subset $Y\subset X$ on which $S^1$ acts freely, since in that case $Y\to Y/S^1$ is an $S^1$-fibre bundle. 
Similarly to the previous proof, we assume that the claim is known for open manifolds for which the stabilisers are at most of size $n-1$, and we then prove it for $X_{\leq n}=U \cup V$ taking $U=X_{\leq n-1}$ and $V$ an $S^1$-invariant tubular neighbourhood of $X_n\subset X_{\leq n}$. By assumption, the claim holds for $U$ and $U\cap V$. For $V$, since $V$ deformation retracts $S^1$-equivariantly onto $X_n$, we can replace $V$ by $X_n$. For $X_n$ we first consider the quotiented action by $S^1_n=S^1/(\Z/n)$, which is a free action and thus yields a Gysin sequence. 
\\ \indent
We claim that the Euler class constructed for the $S^1_n$-action on $X_n$ (so for the circle bundle $X_n \to X_n/S^1_n$), after viewing it as a class in $H^2(V/S^1)$ via the deformation retraction, will pull back to $n$ times the Euler class constructed for $U\cap V \to (U\cap V)/S^1$. One way to see this, is to construct the Euler class by considering a certain edge homomorphism\footnote{\fix{Over real coefficients this is carried out in Bott-Tu \cite[Sec.14 above Proposition 14.33]{BottTu}, where the Euler class can also be constructed as a de Rham form, as an angular form which equals $1$ under integration along fibres.}} in a spectral sequence construction of the Gysin sequence, then along 
a fibre of the circle bundle the Euler class corresponds to a generator of $H^1(S^1)$. The factor $n$ we mentioned above is then caused by the fact that the quotient map $S^1\to S^1_n=S^1/(\Z/n)$ has degree $n$. A more concrete way to prove this, is to define the Euler class in terms of the Thom class of a $2$-disc bundle, namely the mapping cylinder of the projection map of the circle bundle \cite[Below Theorem 4D.10]{Hatcher}. In this case, along a fibre the Euler class corresponds to a generator of $H^2(D^2,S^1)$, and again the factor of $n$ above arises because of the degree of the map $S^1\to S^1_n$.
\\ \indent
We work with (co)homology with coefficients in a field of characteristic coprime to $n$ (since $n$ arises as the size of a stabiliser, otherwise $X_n=\emptyset$ and there is nothing to prove). So we may rescale by $n$ the Euler class in the Gysin sequence for $X_n$ without affecting the exactness of the sequence. After this rescaling, and via the deformation retraction, we therefore obtain a Gysin sequence for $V$ which is functorial with respect to the inclusion $U\cap V \subset V$. Therefore the Mayer-Vietoris argument from the previous proof yields a Gysin sequence for $X_{\leq n}$ that satisfies the claim, as required.
\end{proof}
\begin{lemma}
There is a Poincar\'{e} duality isomorphism $H_*(X/S^1) \cong H^{\dim X - 1 - *}(X/S^1)$, under which cap product on homology corresponds to cup product on cohomology.
\end{lemma}
\begin{proof}
\fixtwo{Recall that there is a proof of Poincar\'{e} duality by using an inductive Mayer-Vietoris sequence argument (see \cite[Lemma 3.36]{Hatcher} or \cite[Lemma 5.6]{BottTu}), where the inductive Poincar\'{e} duality statement is reformulated for open orientable manifolds $U$ by using compactly supported cohomology: $H_*(U)\cong H_c^{\dim U - *}(U)$.
%
%
In our setup, we consider the inductive Mayer-Vietoris argument from the proof of Lemma \ref{Lemma MV argument for fibration}, and we prove inductively on $n$ the Poincar\'{e} duality statement for open orientable manifolds whose stabilisers have size at most $n-1$. 
\\ \indent
The initial step of the induction uses the known Poincar\'{e} duality statement $H_*(U/S^1)\cong H_c^{\dim U -1- *}(U/S^1)$ for the open subsets $U\subset X$ on which the $S^1$-action is free (so $U/S^1$ is an open orientable manifold).
In the inductive step, we consider an $S^1$-invariant tubular open neighbourhood $T$ of $X_n\subset X_{\leq n}$, and we need to justify the Poincar\'{e} duality statement for $T/S^1$.
Via the exponential map, we may view $T$ as an open neighbourhood of the zero section of the normal bundle $V \to X_n$ of the submanifold $X_n\subset X_{\leq n}$. 
Observe that $X_n/S^1$ can be identified geometrically with the smooth oriented (but possibly non-compact) manifold $Y=X_n/S_n^1$, where $S^1_n=S^1/(\Z/n)$ has quotiented out the subgroup generated by $1/n\in \R/\Z=S^1$.
A neighbourhood of $Y\subset X_{\leq n}/S^1$ can be identified via the exponential map with an open neighbourhood of the ``zero section'' of the fibre bundle
\begin{equation}\label{Eq fiber over XmodS1}
\pi: V/S^1 \to X_n/S^1_n=Y,
\end{equation}
 where $S^1$ acts on $V$ by $d\psi_t$, where $\psi_t$ is the $S^1$-flow for time $t\in S^1=\R/\Z$. Notice that \eqref{Eq fiber over XmodS1} is not quite a vector bundle: the fiber over a point $[x]$ is\footnote{\fix{Observe that the geodesic $\gamma(s)=\exp_x(s\cdot v)$ maps via $\psi_{1/n}$ to the geodesic $\exp_x(s\cdot d\psi_{1/n}v)$.}} the quotient $V_x/\Gamma_x$ of the vector space $V_x$ by the finite cyclic $n$-group $\Gamma_x$ generated by $d_x\psi_{1/n}$. We also remark that $X_n$ and $Y$ need not be orientable (although they are in our applications by Remark \ref{Remark the Xn are orientable}).\\
 \indent
  Let $\mathcal{O}_{Y}$ denote the orientation sheaf for $Y$, and $\mathcal{O}_{F}$ the orientation sheaf for the fiber bundle \eqref{Eq fiber over XmodS1} (the latter being the orientation sheaf associated%
\footnote{\fixtwo{There are various equivalent ways to define orientation sheaves (e.g. \cite[Sec.IV.7.9]{Bredon}). In the approach of Bott-Tu \cite[End of Sec.7]{BottTu} the orientation sheaf for the bundle $V\to X_n$ is a line bundle on $X_n$ whose transition maps are multiplication by the sign of the determinant of the Jacobian of the transition functions used for the bundle $V$. This descends to a real line bundle $\mathcal{O}_{F}$ on $Y$, as the $S^1$-action is orientation-preserving.}}
 to the vertical tangent bundle $\ker (d\pi)$ of $V$). By construction
their tensor product,
$$\pi^*(\mathcal{O}_{Y})\otimes \mathcal{O}_{F} \cong \mathcal{O}_{V/S^1},$$
recovers the orientation sheaf for $V/S^1$, which is a \emph{constant} sheaf, since a chosen orientation on $X$ determines an orientation for the total space of $V$ and thus for $V/S^1$. Let $r=\dim X-\dim X_n$ denote the rank of $V\to X_n$ (in this proof, $\dim$ will denote the real dimension). Let $V'=V\setminus (\textrm{zero section})$. We now consider the following diagram:
$$
\xymatrix@R=14pt{
  H^{p+r}_c(V/S^1)\cong H^{p+r}_c(V/S^1,V'/S^1;\mathcal{O}_{V/S^1})\qquad\qquad
 \ar@{->}_{\cong}[d] 
 &
  H_{\dim Y-p}(V/S^1)
 \ar@{<-}^{\cong}[d] 
 \\
 H^p_c(Y;\mathcal{O}_{Y})
  \ar@{->}^{\cong}[r] 
 &
  H_{\dim Y-p}(Y)
}
$$
The top-left cohomology groups in the above diagram are isomorphic because $\mathcal{O}_{V/S^1}$ is a constant sheaf.
The bottom horizontal map is the Poincar\'{e} duality isomorphism for the smooth manifold $Y$ (the orientation sheaf corrects the possibility that $Y$ may not be orientable). The right vertical map is an isomorphism since the spaces are homotopy equivalent ($V$ is $S^1$-equivariantly contractible onto its zero section $X_n$). The left vertical map is (a mild generalisation of) a version of the non-orientable Thom isomorphism \cite[Sec.IV.7.9]{Bredon} (compare also the simpler \cite[Theorem 7.10]{BottTu}), where the orientation sheaf $\mathcal{O}_{F}$ corrects for the possibility that the bundle $V$ is non-orientable, and where we twisted the Thom isomorphism by the sheaf $\mathcal{O}_{Y}$ (so taking $\mathcal{B}=\mathcal{O}_{Y}$ in \cite[Sec.IV.7.9 Equation (22)]{Bredon}). The version we are using is slightly more general than \cite[Sec.IV.7.9]{Bredon} since \eqref{Eq fiber over XmodS1} is not quite a vector bundle. The proof of \cite[Sec.IV.7.9]{Bredon} for a rank $r$ vector bundle $U \to Y$ is a Leray-Serre spectral sequence argument, using as sheaf over $Y$ the cohomology of the fibre pair $(U,U\setminus (\textrm{zero section}))$. The key observation is that, on a local trivialisation, the fiber directions are modelled on the pair $(D^r,D^r\setminus 0)$ where $D^r$ is the $r$-disc in $\R^r$, and the relative cohomology of that pair is a copy of the base field in degree $r$ (the identification with the base field is canonical up to sign, and it is precisely the orientation sheaf of the bundle that keeps track of signs). In our setup, the local model is the pair $(D^r/\Gamma,(D^r\setminus 0)/\Gamma)$ where $\Gamma$ is a cyclic group of order $n$ acting by orientation preserving maps. The assumption on the characteristic of our base field ensures by Remark \ref{Remark cohomology of finite quotients} that the relative cohomology of that pair is canonically isomorphic, via pull-back by the quotient map, to the relative cohomology of the pair $(D^r,D^r\setminus 0)$. So the Thom isomorphism also holds in our setting. The above diagram thus yields the Poincar\'{e} duality statement for $T/S^1$, namely $H^{q}_c(V/S^1)\cong H_{\dim X -1-q}(V/S^1)$.
\\ \indent
The Thom isomorphism is given by cup product by the Thom class $\tau$ (so $\alpha \mapsto \tau \cup \pi^*\alpha$ in the left vertical map in the diagram), and the analogous statement for locally finite homology is the isomorphism $H^{\mathrm{lf}}_{p+r}(V/S^1) \to H_p(Y;\mathcal{O}_{Y})$ given by cap product by $\tau$ followed by $\pi_*$ (so $\beta \mapsto \pi_*(\tau \cap \beta)$). In particular, a fundamental class $[V/S^1]\in H^{\mathrm{lf}}_{\dim X -1}(V/S^1)$ is determined by requiring that $\pi_*(\tau \cap [V/S^1])\in H_{\dim X_n-1}^{\mathrm{lf}}(Y;\mathcal{O}_{Y})$ is Poincar\'{e} dual to $1\in H^0(Y)$. The Poincar\'{e} duality statement $H^{q}_c(V/S^1)\cong H_{\dim X -1-q}(V/S^1)$ above is then given by cap product by $[V/S^1]$. In particular, by construction $[V/S^1]$ is a fundamental class which restricts to the local orientation generators $\mu_{[x]} \in H_{\dim X - 1}(X/S^1, X/S^1 - \{[x]\})$ at points $[x]\in V/S^1 \subset X/S^1$, and these generators are determined by the orientation of $X/S^1$ that is canonically induced by a chosen orientation of $X$.
\\ \indent
Recall the inductive step in the previous proof involved $X_{\leq n}=U \cup V$ where $U=X_{\leq n-1}$. Now, our inductive hypothesis is that Poincar\'{e} duality holds for $U/S^1$, in the sense that the Poincar\'{e} duality isomorphism can be described by cap product by a locally finite fundamental class $[U/S^1]$ which is consistent with the local orientation generators induced by the orientation of $X/S^1$. Above, we proved that the same statement holds for $V/S^1$. The consistency of the two Poincar\'{e} duality isomorphisms on the overlap $(U\cap V)/S^1$ is guaranteed by the fact that the fundamental classes $[U/S^1]$ and $[V/S^1]$ can be compared (and agree) with the local orientation generators $\mu_{[x]}$ at points $[x]$ in the overlap. The Mayer-Vietoris proof of Poincar\'{e} duality therefore applies, and yields the Poincar\'{e} duality statement for $X_{\leq n}/S^1$, which completes the proof of the inductive step.
} 
\end{proof}
%
%
}
\begin{remark}\label{Remark the Xn are orientable}
\fixtwo{We will use the above results for complex manifolds with $S^1$ actions arising from $\C^*$-actions. In that case, the submanifold $X_n$ of points with stabiliser of size $n$ is automatically a complex submanifold (via the exponential map argument above, $T_p X_n$ corresponds to the \emph{complex} linear subspace of $T_p X$ of vectors with stabiliser of size $n$ for the \emph{complex-linear} linearised action). Once an orientation is chosen for $X_n$, an orientation for the normal bundle of $X_n\subset X$ can also be canonically determined from the chosen orientations for $X_n$ and $X$ (similarly, orientations for quotients by $S^1$ can be determined canonically using the canonical orientation for $S^1$).}
\end{remark}
%
\section{Appendix C: Conley-Zehnder indices}

Let $(C^{2n-1},\xi,\alpha)$ be a {\bf contact manifold} admitting a global contact form: 
%
%
so $\alpha$ is a $1$-form on $C$ such that $\alpha\wedge (d\alpha)^{n-1}$ is a volume form, and 
$\xi = \text{ker}(\alpha)$ is the contact structure. The {\bf Reeb vector field} $Y$ on $C$ is determined by $\alpha(Y)=1$, $d\alpha(Y,\cdot)=0$, and it defines the {\bf Reeb flow}. Let $J$ be a complex structure on $\xi$ compatible with $d\alpha|_{\xi}$.
The {\bf anti-canonical} bundle $\kappa^*=\Lambda_{\C}^{\mathrm{top}}\xi$ is the highest exterior power of this complex bundle, and its dual $\kappa$ is called the {\bf canonical bundle}. Now assume $\kappa$ is trivial and fix%
\footnote{
The argument below would similarly apply if we were given a nowhere zero section $K^*$ of the anti-canonical bundle $\kappa^*$, in which case we use a trivialisation $\tau$ with the property that the induced map $\wedge^{n-1}(\tau)^* : (\R / \ell\Z) \times (\Lambda^{\mathrm{top}}_{\C}\C^{n-1}) \to \kappa_J$
 satisfies  $\Lambda^{n-1}(\tau)^*(z_1 \wedge \cdots \wedge z_{n-1}) = K^*$.} 
 a nowhere zero section $K$ of $\kappa$.

By {\bf Reeb orbit $\gamma$ of length $\ell$} we mean a periodic orbit of period $\ell$ of the Reeb vector field, so $\gamma : \R / \ell \Z \to C$.
Up to homotopy, there is a unique trivialisation
$\tau : \gamma^* \xi \to (\R / \ell\Z) \times \C^{n-1}$ 
so that the corresponding trivialisation
$\Lambda^{n-1} \tau : \gamma^*\kappa \to (\R / \ell\Z) \times (\Lambda^{\mathrm{top}}_{\C}\C^{n-1})^*$ 
satisfies
$(\Lambda^{n-1} \tau)(K) = dz_1 \wedge \cdots \wedge dz_{n-1}$.
Expressing the derivative of the Reeb flow $\phi_t : C \to C$ in the trivialisation at $\gamma(0)$ yields a family of symplectic matrices
\begin{equation} \label{Trivialization Equation}
(\text{pr}_2 \circ \tau|_{\gamma(t)}) \circ D\phi_t \circ (\text{pr}_2 \circ \tau|_{\gamma(0)})^{-1} : \C^{n-1} \to \C^{n-1}
\end{equation}
where $\text{pr}_2 : \R / \ell \Z \times \C^{n-1} \to \C^{n-1}$ is the projection.
The {\bf Conley-Zehnder index} of $\gamma$ is the Conley-Zehnder index of this family, which \fix{is a half-integer satisfying} the following properties (e.g. see \cite{Salamon} \fix{and \cite{Gutt}}):
\begin{CZ}
\item \label{CZ Index of Catenation is sum of CZ indices}
 If $A_t,B_t$ are two paths of symplectic matrices then the Conley-Zehnder index of their catenation
is the sum $CZ(A_t) + CZ(B_t)$.
\item \label{CZ of a direct sum}
If $A_t,B_t$ are two paths of symplectic matrices then $CZ(A_t \oplus B_t) = CZ(A_t) + CZ(B_t)$.
\item \label{CZ does not change after reparameterization}
 \fix{The Conley-Zehnder index is invariant under homotopies with fixed end points.}
\item \label{CZ of a path in unitary line}
The Conley-Zehnder index of $(e^{is})_{s \in [0,t]}$ is
$W(t)$, where
\begin{equation}
\label{Equation W function for CZ}
W \co \R \to \N, \quad
W(t) = \left\{
\begin{array}{ll}
2 \lfloor t/2\pi \rfloor + 1 & \text{if} \ t \notin 2\pi\Z \\
t/\pi & \text{if}  \ t \in 2 \pi \Z
\end{array}
\right. .
\end{equation}
\end{CZ}

Let $\phi_t : C \to C$ be the Reeb flow.
The {\bf linearized return map} associated to the Reeb orbit $\gamma$ of length $\ell$ is the restriction $D\phi_{\ell} : \xi|_{\gamma(0)} \to \xi|_{\gamma(0)}$.

\begin{definition}
\label{Definition Morse-Bott submfd}	
A {\bf Morse-Bott submanifold} $B \subset C$
of length $\ell$ is a submanifold such that:
\begin{enumerate}
\item Through each point of $B$, there is a Reeb orbit of length
$\ell$ contained in $B$.
\item The linearized return map for each such Reeb orbit has $1$-eigenspace equal to $TB \cap \xi|_B$.
%
%
\end{enumerate}
So the Reeb flow satisfies $\phi_t(B)\subset B$ and $\phi_{\ell}|_B=\mathrm{id}$, and the real dimension of the $1$-eigenspace of each return map is $\dim B-1$. 
%
%
We call $\phi_{\ell t}|_B$ the {\bf associated $S^1$-action} on $B$.

\fix{For a connected Morse-Bott submanifold $B\subset C$, we define $\mathrm{CZ}(B)=\mathrm{CZ}(p)\in \Z$, for any point $p\in B$ (independence of the choice of $p$ is a consequence of property \ref{CZ does not change after reparameterization} above).
}
\end{definition}

\begin{remark} Convex symplectic manifolds $M$ (with $J$ as in Sec.\ref{Subsection convex symplectic manifolds}) contain a contact hypersurface $\Sigma$ with a complex splitting $TM=\xi\oplus \C$ where $\C\cong \R Z \oplus  \R Y$ for the vector fields $Z,Y$ defined in Sec.\ref{Subsection convex symplectic manifolds}. If the canonical bundle $\mathcal{K}_M=\Lambda_{\C}^{\mathrm{top}}T^*M$ is trivial, then so is the canonical bundle $\kappa$ for $\Sigma$. 
For a convex symplectic manifold $M^{2n}$ with trivial canonical bundle $\mathcal{K}$ and a choice of trivialisation, our {\bf Conley-Zehnder grading} on the Floer complex $CF^*(H)$ is 
\begin{equation}\label{Equation CZ for Ham case}
\mu(x)=n - \mathrm{CZ}_H(x),
\end{equation}
 where $\mathrm{CZ}_H(x)$ is computed analogously to the above, except now $x$ is a Hamiltonian $1$-orbit for $H$ so we consider the linearisation $D\phi_H^t$ of the Hamiltonian flow in a trivialisation of $x^*TM$ that is compatible with the given trivialisation of $x^*\mathcal{K}$.
If we chose a different trivialisation of $\kappa$, involving a section of $\kappa$ that is obtained from the restriction of the section for $\mathcal{K}_M$ multiplied by a function $f:\Sigma \to \C^*$, then the CZ-index of a Reeb orbit in a free homotopy class $c$, so a conjugacy class of $\pi_1(\Sigma)$, changes by $-2\langle [f],c \rangle$ where $[f]\in H^1(\Sigma,\Z)\cong [\Sigma,\C^*]$, and the grading on $SH^*$ changes by $+2\langle [f],c\rangle$. 
%
%
A similar argument applies to the choice of trivialisation of $\mathcal{K}_M$; that choice will not matter if $M$ is simply connected.
%
\end{remark}

\begin{remark}\label{Remark from CZReeb to CZHam}
\fix{The convention in \eqref{Equation CZ for Ham case}} ensures\footnote{This agrees with \cite[Exercise 2.8]{Salamon} despite the sign, because we use the convention $\omega(\cdot,X_H)=dH$.} that for a $C^2$-small Morse Hamiltonian $H$, a critical point $x$ of $H$ will have Morse index $\mu(x)$. In the notation of Sec.\ref{Subsection convex symplectic manifolds} on the end $\Sigma \times [1,\infty)$, using a radial Hamiltonian $H=h(R)$, a $1$-orbit $x$ in the slice of slope $h'(R)=\ell$ corresponds to a Reeb orbit $\gamma(t)=x(t/\ell)$ of length $\ell$ in $\Sigma$. We pick a basis of sections to trivialize $x^*\xi$ so that together with $Z,Y$ we obtain a trivialisation of $x^*TM\cong x^*\xi\oplus (\R Z\oplus \R Y)$ that is compatible with the given trivialisation of $\mathcal{K}$. If $h''(R)>0$, then the family of symplectic matrices obtained for $\varphi_H^t$ can be identified with the family obtained by \eqref{Trivialization Equation}, together with a shear of type $\left( \begin{smallmatrix} 1 & 0 \\ \mathrm{positive} & 1 \end{smallmatrix}\right)$ in the $(Z,Y)$-plane contributing $\frac{1}{2}$ to $\mathrm{CZ}_H$ \fix{(e.g. see \cite[Prop.4.9]{Gutt})}. Thus $\mathrm{CZ}_H(x)=\fix{\mathrm{CZ}(\gamma)}+\frac{1}{2}$. The correction $+\frac{1}{2}$ however will cancel out, once one takes into account that there is an $S^1$-family of $1$-orbits $x(\cdot+\textrm{constant})$. Indeed if $x$ is transversally non-degenerate then in $CF^*(H)$ it would give rise, after perturbation, to a copy of $H^*(S^1)$ \fix{shifted up by $n-\fix{\mathrm{CZ}(\gamma)}-1$} \cite[Prop.2.2]{CFHW}. In the case of a \fix{connected} Morse-Bott submanifold $S\subset \Sigma$ of orbits, using a Morse-Bott Floer complex for $H$ as in \cite{Bourgeois-Oancea,Bourgeois-Oancea2}, one analogously obtains a copy of $H^*(S)$ shifted up in degree by 
\begin{equation}\label{Equation Morse-Bott shift}
\mu(S)=n-\mathrm{CZ}(S)-\tfrac{1}{2}-\tfrac{1}{2}\dim S
\end{equation}
because half of the signature of the Hessian of an auxiliary Morse function $f_S:S\to \R$ used to perturb the moduli space of $1$-orbits $S$ would contribute to $\mathrm{CZ}_H$ \cite[Section 3.3]{OanceaEnsaios}. In our conventions, a radial Hamiltonian with $h''(R)>0$ on $\C^n$ gives rise to a Morse-Bott submanifold $S=S^{2n-1}$ when the flow undergoes one full rotation, and $\mathrm{CZ}(S)=2n$, 
so \eqref{Equation Morse-Bott shift} equals
%
$\mu(S)=-2n$ (the grading of $\min f_S$), and $\max f_S\in H^{\mathrm{top}}(S)$ contributes in grading $-1$ and has non-trivial Floer differential exhibiting the unit $1\in SH^0(\C^n)$ as a boundary.
\end{remark}

\section{Appendix D: the $F$-filtration and positive symplectic cohomology}
\label{Section Appendix D}

\subsection{Convex symplectic manifolds}
\label{Subsection convex symplectic manifolds}
We consider non-compact symplectic manifolds $(M,\omega)$ where $\omega$ is allowed to be non-exact, but outside of a bounded domain $M_0\subset M$ there is a symplectomorphism
$
(M\setminus M_0,\omega|_{M\setminus M_0}) \cong (\Sigma\times [1,\infty),d(R\alpha)),
$
where $(\Sigma,\alpha)$ is a contact manifold, and $R$ is the coordinate on $[1,\infty)$ ({\bf radial coordinate}).
The {\bf Liouville vector field} $Z=R\partial_R$ is defined at infinity via $\omega(Z,\cdot)=\theta$. The {\bf Reeb vector field} $Y$ on the $\Sigma$ factor is defined by
$
\alpha(Y)=1, \; d\alpha(Y,\cdot)=0.
$
By ``the'' contact hypersurface $\Sigma \subset M$ we mean the level set $R=1$. By {\bf Reeb periods} we mean the periods of Reeb orbits on this $\Sigma$.
The almost complex structure $J$ is always assumed to be $\omega$-compatible and of {\bf contact type at infinity} (meaning $JZ=Y$ or equivalently $\theta=-dR\circ J$).
Let $H: M \to \R$ be smooth. 
By {\bf 1-orbits} we mean $1$-periodic Hamiltonian orbits (i.e.\,using the Hamiltonian vector field $X_H$ where $\omega(\cdot,X_H)=dH$).
The data $(H,J,\omega)$ determines the Floer solutions which define the Floer chain complex $CF^*(H)$. We recall (e.g.\,see  \cite{Ritter3}) that when $H$ is a linear function of $R$ at infinity of slope different than all Reeb periods, the $R$-coordinate of Floer solutions satisfies a maximum principle. This ensures that its cohomology $HF^*(H)$ is defined (to avoid technicalities in defining Floer cohomology, one assumes $M$ satisfies a weak monotonicity condition \cite{Ritter2}, for example this holds if $c_1(M)=0$).
\begin{figure}[ht]
\input{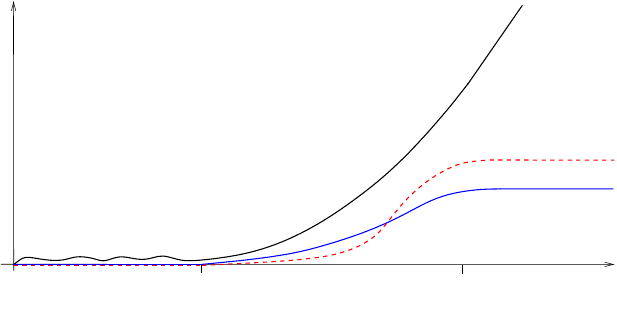_t}
\caption{An illustration of the graphs of $H$, $\phi$, and $f$.}
\label{Figure Graph H,phi,f}
\end{figure}
\subsection{Admissible Hamiltonians}
\label{Subsection Admissible Hamiltonians}
\fix{Observe Figure \ref{Figure Graph H,phi,f}. We assume that for some $R_1>R_0>0$,} 
\begin{enumerate}
\item \label{Appendix item 1} $J$ is of contact type for $R\geq R_0$,
\item $H=h(R)$ only depends on the radial coordinate for $R\geq R_0$,
\item $h'(R_0)>0$ is smaller than the minimal Reeb period,
\item  \label{Appendix item 4} $h''(R_0)>0$, and $h''(R)\geq 0$ for $R\geq R_0$ (so $h'$ is increasing),
\item if $h''(R)=0$ for some $R\geq R_0$ then we require that $h'(R)$ is not a Reeb period,
\item \label{Appendix item 6} \fix{for $R\geq R_1$,} $h'(R)$ is a constant (and not equal to a Reeb period),
\item \label{Appendix item 7 get QH} For $R\leq R_0$, $H$ is Morse and $C^2$-small so that all $1$-orbits in $R\leq R_0$ are constant (i.e.\,critical points of $H$) and the Floer \fix{complex} generated by these $1$-orbits is quasi-isomorphic to the Morse complex, and on cohomology recovers $QH^*(M,\omega)$.
\end{enumerate}

The last condition is not strictly necessary, but one can apply a Floer continuation isomorphism to homotope $H$ on $R \leq R_0$ to ensure that condition. \fix{That the complex in \eqref{Appendix item 7 get QH} is well-defined follows from a maximum principle and it is known that one recovers quantum cohomology \cite{Ritter3}. Moreover, the filtration argument in Sec.\ref{Subsection Overview} will show that this complex is a subcomplex $CF^*_0(H)$ of the Floer complex $CF^*(H)$ of $H$.} 

\fix{Let $H_s: M \to \R$ depend on $s\in \R$ on a compact subset of $\R$, with $H_s=H_-$ for $s\ll 0$ and $H_s=H_+$ for $s\gg 0$, where $H_{\pm}: M \to \R$ are admissible (i.e. satisfy the above conditions).
Then $H_s$ is an {\bf admissible homotopy} of Hamiltonians if $H_s=h_s(R)$ on $R\geq R_0$ such that
\begin{enumerate}
\addtocounter{enumi}{7}
\item $\partial_s h_s'\leq 0$ (to ensure that the maximum principle for Floer solutions applies \cite{Ritter3}), 
\item each $h_s$ satisfies the above conditions \eqref{Appendix item 1}-\eqref{Appendix item 4},
\item and $h_s'$ is constant for $R\geq R_1$ (but may depend on $s$, or be equal to a Reeb period).
\end{enumerate}}%
\noindent Also $J=J_s$ may vary with $s$, subject to the above compatibility and contact type conditions.

\subsection{Cut-off function}
\fix{Observe Figure \ref{Figure Graph H,phi,f}.} Let $\phi: \R \to [0,1]$ be a smooth and increasing function such that
\fix{
\begin{enumerate}
\item $\phi=0$ for $R\leq R_0$,\; $\phi>0$ for $R>R_0$,
\item\label{Item phi' positive} $\phi'>0$ for $R_0< R < R_1$ (recall $h'(R)$ is constant for $R\geq R_1$),
\item and $\phi=1$ for large $R$.
\end{enumerate} 
}
One could omit \eqref{Item phi' positive} at the cost of losing the strictness of the filtration in Theorem \ref{Theorem filtration 1}.

The cut-off function determines an exact two-form on $M$,
$$
\eta = d(\phi(R)\alpha) = \phi(R)\, d\alpha + \phi'(R)\, dR\wedge \alpha,
$$
and an associated $1$-form $\Omega_{\eta}$ on the free loop space $\mathcal{L}M=C^{\infty}(S^1,M)$ given by
\begin{equation}\label{Eqn1Form}
\Omega_{\eta}\co T_x\mathcal{L}M = C^{\infty}(S^1,x^*TM)\to \R,\;\; \xi \mapsto -{\textstyle\int} \eta(\xi,\partial_t x - X_H)\, dt.
\end{equation}
\begin{lemma}\label{Lemma 1form is negative on Floer}
The $1$-form $\Omega_{\eta}$ is negative (or zero) on Floer trajectories $u: \R \times S^1 \to M$. 
\end{lemma}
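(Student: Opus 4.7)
\begin{proof}[Proof proposal]
The plan is to substitute the Floer equation into the definition \eqref{Eqn1Form}, reducing the statement to a pointwise non-negativity check for the 2-form $\eta$ evaluated on pairs $(v,Jv)$, and then to verify that inequality by decomposing $v$ with respect to the natural splitting $TM = \R Z \oplus \R Y \oplus \xi$ on the cylindrical end.

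First, for a Floer trajectory $u:\R\times S^1\to M$ satisfying $\partial_s u + J(\partial_t u - X_H) = 0$, applying $J$ gives $\partial_t u - X_H = J\partial_s u$. Setting $x_s = u(s,\cdot)$, the tangent vector to the path $s\mapsto x_s$ in $\mathcal{L}M$ is $\partial_s u$, so
\[
\Omega_\eta|_{x_s}(\partial_s u) \;=\; -\!\int_{S^1} \eta(\partial_s u,\, J\partial_s u)\, dt.
\]
It therefore suffices to show that $\eta(v,Jv)\ge 0$ pointwise on $M$ for every tangent vector $v$.

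On the region $R\le R_0$ we have $\phi(R) = 0 = \phi'(R)$, so $\eta$ vanishes there and there is nothing to check. On $R\ge R_0$ the contact-type condition $JZ=Y$ holds (hence $JY=-Z$), and $J$ preserves $\xi=\ker\alpha$. Writing $v = aZ + bY + w$ with $w\in\xi$, we have $Jv = aY - bZ + Jw$. Since $Z = R\partial_R$, we get $dR(Z)=R$, while $dR(Y)=0$ and $dR(w)=dR(Jw)=0$ because $\xi$ is tangent to the $\Sigma$-factor. Likewise $\alpha(Z)=0$, $\alpha(Y)=1$, and $\alpha|_\xi=0$. A direct computation then gives
\[
(dR\wedge\alpha)(v,Jv) \;=\; dR(v)\,\alpha(Jv) - dR(Jv)\,\alpha(v) \;=\; (aR)(a) - (-bR)(b) \;=\; R(a^2+b^2).
\]
For the other term, $d\alpha$ vanishes on $Z$ (as $\alpha$ is $R$-independent) and on $Y$ (by the Reeb condition), so
\[
d\alpha(v,Jv) \;=\; d\alpha(w,Jw) \;\ge\; 0,
\]
with the last inequality being the $d\alpha|_\xi$-compatibility of $J|_\xi$. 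Combining,
\[
\eta(v,Jv) \;=\; \phi(R)\, d\alpha(w,Jw) \;+\; \phi'(R)\, R\, (a^2+b^2) \;\ge\; 0,
\]
using $\phi\ge 0$, $\phi'\ge 0$ and $R>0$. Thus $\Omega_\eta(\partial_s u) \le 0$ along every Floer trajectory.
\end{proof}

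The only conceptual step is the substitution via the Floer equation; the remainder is a bookkeeping computation in the $\R Z\oplus\R Y\oplus\xi$ splitting, so I do not anticipate any real obstacle. The one point to be careful about is that $\eta$ is globally defined on $M$ (since $\phi\alpha$ extends by zero across $R=R_0$), which is what allows the pointwise estimate to be integrated over $S^1$ without boundary concerns.
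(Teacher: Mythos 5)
Your proof is correct and follows essentially the same route as the paper's: substitute the Floer equation to reduce to a pointwise non-negativity check of $\eta(v,Jv)$, then decompose $v$ in $\R Z\oplus\R Y\oplus\xi$ on the cylindrical end and evaluate each term. The only cosmetic difference is that the paper packages the cross term $-dR(Jv)\alpha(v)$ as $\theta(v)\alpha(v)$ using $\theta=-dR\circ J$ and $\theta=R\alpha$, whereas you expand it directly; both yield $R(a^2+b^2)$.
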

\begin{proof} Substituting the Floer equation $\partial_t u - X_H = J\partial_s u$, and abbreviating $\rho=R\circ u$,
$$
\begin{array}{rcl}
\eta(\partial_s u, \partial_t u - X_H)
& = &
\eta(\partial_s u,J\partial_s u)
\\
& = &
\phi(\rho) \cdot d\alpha(\partial_s u,J\partial_s u) + \phi'(\rho) \cdot (dR \wedge \alpha)(\partial_s u,J\partial_s u)
\\
& = &
\textrm{positive}\cdot \textrm{positive} + \textrm{positive}\cdot (dR \wedge \alpha)(\partial_s u,J\partial_s u)
\end{array}
$$
To estimate the last term, we may assume that $R\geq R_0$ since $\phi'=0$ otherwise. Since $J$ is of contact type for $R\geq R_0$, we can decompose
$$
\partial_s u = C\oplus yY \oplus zZ \in \ker \alpha \oplus \R Y \oplus \R Z
$$
where $Z=R\partial_R$. Thus: \fix{$dR(\partial_s u) = \rho z$} and $\alpha(J\partial_s u) = \alpha(JzZ) = \alpha(zY)=z$. Using $\theta=R\alpha$,
$$
(dR \wedge \alpha)(\partial_s u,J\partial_s u)
=
dR(\partial_s u) \alpha(J\partial_s u) 
+ \alpha(\partial_s u) \theta(\partial_s u)
=
\fix{\rho z^2 + \rho y^2 \geq 0.}
$$
%
%
%
%
The claim then follows from \phantom\qedhere
%
%
\begin{equation}\label{Eqn positivity explicitly}
\eta(\partial_s u, \partial_t u - X_H)=\phi(\rho)\cdot |C|^2 + \rho\,\phi'(\rho)\cdot (z^2 +  y^2)\geq 0. \rlap{\qquad\qedsymbol}
\end{equation}
\end{proof}

\subsection{Filtration functional}
\label{Subsection Overview}
\fix{Observe Figure \ref{Figure Graph H,phi,f}.} Let $f: \R \to [0,\infty)$ be the smooth function defined by 
$$\textstyle f(R) = \int_0^R \phi'(\tau)\, h'(\tau)\, d\tau.$$
Notice it is a primitive for $\phi'(R)\, h'(R)\, dR$, and satisfies\footnote{using that $\phi'\geq 0,h'\geq 0$, and those are strict just above $R=R_0$, and that $\phi'=0$ for large $R$.}
\begin{enumerate}
\item $f=0$ for $R\leq R_0$, $f>0$ for $R>R_0$,
\item \fix{and} $f$ is bounded.
\end{enumerate}

Define the {\bf filtration functional} $F: \mathcal{L}M \to \R$ on the free loop space by
$$
F(x)=-\int_{S^1} x^*(\phi \alpha) + \int_{S^1} f(R\circ x)\, dt.
$$
where $R\circ x$ is the $R$-coordinate of $x(t)$. 
\begin{theorem}\label{Theorem filtration 1}
The filtration functional $F$ satisfies:
\begin{enumerate}
\item \label{Item Theorem filtration 1 Exactness} \emph{Exactness}: $F$ is a primitive of $\Omega_{\eta}$, so
thus $dF\cdot \xi = - \int_{S^1} \eta(\xi,\partial_t x-X_H)\, dt$.
\item \label{Item Theorem filtration 1 Negativity} \emph{Negativity}: $dF\cdot \partial_s u \leq 0$ for any Floer trajectory $u$ for $(H,J,\omega)$, thus $F(x_-)\geq F(x_+)$ if $u$ travels from $x_-$ to $x_+$.
\item \label{Item Theorem filtration 1 Separation} \emph{Separation}: $F=0$ on all loops in $R\leq R_0$, and $F<0$ on the $1$-orbits in $R\geq R_0$.
\item \label{Item Theorem filtration 1 Compatibility} \emph{Compatibility}: $F$ decreases along any Floer continuation solution $u$ for any admissible homotopy of $(H,J)$.
\item \label{Item Theorem filtration 1 Strictness} \emph{Strictness}:
$F(x_-)>F(x_+)$ for any Floer trajectory joining distinct orbits $x_-$, $x_+$ with $R(x_+)\geq R_0$.
\end{enumerate}

Thus $F$ determines a filtration on the Floer chain complex for a given admissible pair $(H,J)$, and this filtration is respected by Floer continuation maps for admissible homotopies of $(H,J)$.
We use cohomological conventions,\footnote{$x_-$ contributes to $\partial x_+$.} so the Floer differential increases the filtration.
\end{theorem}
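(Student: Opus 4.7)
The plan is to dispatch the five claims in order, with the strictness assertion as the only genuinely subtle one. For \textbf{Exactness} (1), I differentiate the two summands of $F$ separately: the standard identity gives
\[
d\Big(\int_{S^1} x^*(\phi\alpha)\Big)\cdot\xi = \int_{S^1} \eta(\xi,\partial_t x)\,dt, \qquad d\Big(\int_{S^1} f(R\circ x)\,dt\Big)\cdot\xi = \int_{S^1} \phi'(R)h'(R)\,dR(\xi)\,dt.
\]
Matching with the claimed formula $dF\cdot\xi = -\int \eta(\xi,\partial_t x - X_H)\,dt$ reduces to checking the pointwise identity $\eta(\xi,X_H) = \phi'(R)h'(R)\,dR(\xi)$, which follows from $X_H = h'(R)Y$ in the radial region together with $d\alpha(\cdot,Y)=0$, $\alpha(Y)=1$, $dR(Y)=0$; outside the radial region both sides vanish. \textbf{Negativity} (2) is then immediate: setting $\xi = \partial_s u$ and invoking Lemma \ref{Lemma 1form is negative on Floer} yields $dF\cdot\partial_s u \leq 0$.

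For \textbf{Separation} (3), the case $R\leq R_0$ is trivial since $\phi = 0$ and $f = 0$. For a 1-orbit $x$ entirely in the radial region, necessarily at a level $R = R^*\in (R_0,R_1)$ by admissibility conditions (3) and (6), the orbit is a reparametrised Reeb orbit with $\alpha(\dot{x}) = h'(R^*)$. Then $F(x) = -\phi(R^*)h'(R^*) + f(R^*)$, and integration by parts on $f(R^*) = \int_0^{R^*}\phi'(\tau)h'(\tau)\,d\tau$ yields
\[
F(x) = -\int_0^{R^*}\phi(\tau)\,h''(\tau)\,d\tau < 0,
\]
the strict inequality using $\phi > 0$ on $(R_0, R^*]$ together with $h''(R_0) > 0$ from admissibility (4).

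For \textbf{Compatibility} (4), the filtration now varies with $s$ through $f_{H_s}$, so I split $\tfrac{d}{ds} F_{H_s}(u(s,\cdot))$ into two contributions. The variation in $u$ gives $-\int\eta(\partial_s u,\partial_t u - X_{H_s})\,dt \leq 0$, by the same computation as in Lemma \ref{Lemma 1form is negative on Floer} applied verbatim to the $s$-dependent data (the contact-type argument only requires pointwise information in the radial region, not $s$-invariance). The variation in the Hamiltonian gives $\int\Big(\int_0^{R\circ u}\phi'(\tau)\partial_s h_s'(\tau)\,d\tau\Big)dt \leq 0$ using $\phi'\geq 0$ and the admissibility requirement $\partial_s h_s'\leq 0$.

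The main obstacle will be \textbf{Strictness} (5), which requires a unique-continuation argument. Vanishing of $F(x_-) - F(x_+)$ forces the integrand computed in Lemma \ref{Lemma 1form is negative on Floer},
\[
\eta(\partial_s u,\partial_t u - X_H) = \phi(R)\,|C|^2 + R\,\phi'(R)(y^2 + z^2),
\]
to vanish almost everywhere. On the annular region $R_0 < R < R_1$, where both $\phi > 0$ and $\phi' > 0$, this forces $C = 0$ and $y = z = 0$, hence $\partial_s u \equiv 0$ on any open set intersecting this region. Standard unique continuation for Floer trajectories (via Carleman-type estimates for the first-order elliptic operator $\partial_s + J(\partial_t - X_H)$) then propagates $\partial_s u \equiv 0$ over the entire cylinder, giving $x_- = x_+$. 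It remains to rule out the boundary cases where $u$ avoids the annulus: if $u$ is confined to $R \leq R_0$ then so is $x_+$, contradicting $R(x_+)\geq R_0$ combined with the fact that critical points of the Morse $C^2$-small $H$ lie strictly in $R < R_0$; and if $u$ is confined to $R \geq R_1$ then $x_+$ would be a 1-orbit at a slope $h'(R)$ not equal to any Reeb period, which is impossible by admissibility (6).
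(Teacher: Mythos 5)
Your proposal is correct and follows essentially the same route as the paper: split $F$ into the $\phi$-action $\mathcal{A}_{\phi}(x)=-\int x^*(\phi\alpha)$ and the auxiliary term $\int f(R\circ x)\,dt$, differentiate each, verify $\eta(\xi,X_H)=\phi'(R)h'(R)\,dR(\xi)$, use Lemma~\ref{Lemma 1form is negative on Floer} for negativity, the formula \eqref{Eqn period function T} for $T(\rho)$ (or its integrated version $F(x)=-\int_0^{R^*}\phi\,h''$) for separation, and the $s$-dependent version of the filtration for compatibility. One remark on Strictness (5): you invoke Carleman-type unique continuation for $\partial_s u$, which is a valid and standard tool, and it closes a step that the paper leaves terse. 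The paper's intended argument appears to be more elementary: once the integrand vanishes and $\partial_s u=0$ wherever $R_0<R(u)<R_1$ (which by the maximum principle is everywhere that $R(u)>R_0$), the $R$-coordinate of $u$ is $s$-independent on that region, so if $R(x_+)>R_0$ the level $R=R(x_+)$ propagates back to $s=-\infty$, forcing $u\equiv x_+$ and hence $x_-=x_+$. Both routes work; the Carleman argument is heavier machinery but certainly suffices. Your discussion of the ``boundary cases'' is harmless but redundant: $R(x_+)\geq R_0$ forces $R(x_+)>R_0$ strictly (since $h'(R_0)$ is below the minimal Reeb period) and the maximum principle keeps $R(u)<R_1$, so $u$ necessarily enters the annulus.
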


We prove this in Section \ref{Subsection Proof of filtration theorem}. Recall\footnote{since $X_H=h'(R)Y$ for $R\geq R_0$, a $1$-orbit $x(t)$ corresponds to the Reeb orbit \fix{$x(t/\ell)$ of period $\ell=h'(\rho)$.}} that non-constant $1$-orbits in $R=\rho\geq R_0$ are in $1$-to-$1$ correspondence with closed Reeb orbits in $\Sigma$ of period $\tau=h'(\rho)$; the filtration value is
\begin{equation}\label{Eqn period function T}
F(x) = T(\rho)\equiv -\phi(\rho)\, h'(\rho) + f(\rho).
\end{equation}
Note $T:[0,\infty)\to \R$ satisfies the following properties, arising from the conditions on $\phi,h$,
\begin{enumerate}
\item $T(\rho)=0$ for $\rho \leq R_0$,
\item $T$ is decreasing,\footnote{\fix{Indeed $T'(\rho)= - \phi(\rho)\, h''(\rho) \leq 0$.}}
\item $T(\rho)<0$ for $R> R_0$,\footnote{by integrating $T'$, using that $\phi,h''>0$ just above $R_0$.}
\item $T$ strictly decreases near $\rho\geq R_0$ if $h'(\rho)$ is a Reeb period.\footnote{since then $h''(\rho)>0$.}
\end{enumerate}

From this, we also deduce \fix{that $T$ is} a strict filtration for $R\geq R_0$, as follows.

\begin{corollary}\label{Corollary R is also a filtration}
If $u$ is a Floer trajectory joining distinct $1$-orbits $x_-,x_+$, with $R(x_+)\geq R_0$,\footnote{Since $H=h(R)$ is time-independent for $R\geq R_0$, each $1$-orbit of $H$ arises as an $S^1$-family of orbits, due to the choice of the starting point of the orbit. With a time-dependent perturbation localized near the orbit, one can split the $S^1$-family into two non-degenerate $1$-orbits (lying in the same $R$-coordinate slice), such that locally there are precisely two rigid Floer trajectories connecting these orbits, and they lie in the same $R$-slice and give cancelling contributions to the Floer differential (so this local Floer complex computes $H^*(S^1)$ up to a degree shift). For the purposes of Floer cohomology we can ignore these two Floer trajectories, and with this proviso the above Corollary continues to hold and implies that there cannot be other Floer trajectories connecting the two perturbed $1$-orbits. The same argument holds more generally when there is a Morse-Bott manifold $\mathcal{O}_a$ of orbits, in which case the local Floer complex computes $H^*(\mathcal{O}_a)$ up to a degree shift, and the Corollary refers to Floer trajectories that are not already accounted for in this local Floer cohomology.
}
$$
R(x_-) < R(x_+).
$$
In particular, given a $1$-orbit $y$ in $R=\rho\geq R_0$, the Floer differential $\partial y$ is determined by the $1$-orbits in $R<\rho$ and the Floer trajectories that lie entirely in $R\leq \rho$.
\end{corollary}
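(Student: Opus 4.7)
The plan is to combine the closed form \eqref{Eqn period function T} for $F$ on non-constant $1$-orbits in the radial region with the strictness property \eqref{Item Theorem filtration 1 Strictness} of Theorem~\ref{Theorem filtration 1}. The key observation already recorded in the excerpt is that the period function $T(\rho)=-\phi(\rho)h'(\rho)+f(\rho)$ satisfies $T'(\rho)=-\phi(\rho)h''(\rho)\leq 0$, so $T$ is non-increasing on $[R_0,\infty)$; moreover $T(R_0)=0$ and $T<0$ on $(R_0,\infty)$.

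For the main inequality $R(x_-)<R(x_+)$, I would argue by contradiction and assume $R(x_-)\geq R(x_+)\geq R_0$. In the subcase $R(x_-)\geq R_0$, both endpoints lie in the radial region so $F(x_\pm)=T(R(x_\pm))$ by \eqref{Eqn period function T}, and monotonicity of $T$ gives $F(x_-)\leq F(x_+)$, directly contradicting strictness. In the subcase $R(x_-)<R_0$, the Hamiltonian is Morse and $C^2$-small there, so $x_-$ is a constant orbit with $F(x_-)=0$, while $F(x_+)=T(R(x_+))\leq 0$, which again yields $F(x_-)\geq F(x_+)$ and contradicts strictness. The endpoint value $R(x_+)=R_0$ does not actually occur: at that level $dH=h'(R_0)\,dR\neq 0$ rules out critical points, and by condition (3) of Sec.~\ref{Subsection Admissible Hamiltonians} the slope $h'(R_0)$ is smaller than the minimal Reeb period, ruling out non-constant orbits.

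For the ``in particular'' clause, setting $x_+=y$ in the main inequality shows that every rigid trajectory $u$ contributing to $\partial y$ has its other asymptote $x_-$ in $\{R<\rho\}$. To confine $u$ itself to $\{R\leq\rho\}$, I would invoke the standard contact-type maximum principle referenced in Sec.~\ref{Subsection convex symplectic manifolds}: since $J$ is of contact type and $H=h(R)$ is radial on $\{R\geq R_0\}$, the function $R\circ u$ admits no interior local maximum in that region, so its supremum is attained at one of the asymptotes and is therefore at most $\max(R(x_-),R(x_+))=\rho$.

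The only delicate point is the borderline case $R(x_-)=R(x_+)=\rho>R_0$: there $T$ takes the same value on both orbits, so non-strict monotonicity of $T$ only gives $F(x_-)=F(x_+)$, which is not in itself a contradiction. This subcase is handled precisely by the \emph{strict} inequality $F(x_-)>F(x_+)$ in Theorem~\ref{Theorem filtration 1}\eqref{Item Theorem filtration 1 Strictness}, whose force ultimately comes from the fact that $h''(\rho)>0$ at every radius where a non-constant $1$-orbit sits (condition (5) of Sec.~\ref{Subsection Admissible Hamiltonians}). Ensuring that Theorem~\ref{Theorem filtration 1}\eqref{Item Theorem filtration 1 Strictness} really delivers a strict inequality, as opposed to the weak inequality one would extract by merely integrating \eqref{Eqn positivity explicitly}, is the one step that requires genuine input beyond the filtration formalism; everything else in the corollary is a mechanical consequence of strictness plus monotonicity of $T$.
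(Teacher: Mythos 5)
Your proof follows essentially the paper's route: the main inequality is read off from the strict inequality $F(x_-)>F(x_+)$ of Theorem~\ref{Theorem filtration 1}\eqref{Item Theorem filtration 1 Strictness} combined with the fact that $T$ in \eqref{Eqn period function T} is non-increasing (both because $F(x)=T(R(x))$ for every $1$-orbit $x$), and the ``in particular'' clause comes from the maximum principle, exactly as in the paper.

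Two corrections to your write-up, neither fatal. First, under your contradiction hypothesis $R(x_-)\geq R(x_+)\geq R_0$, the subcase $R(x_-)<R_0$ is vacuous and should simply be dropped; worse, the reasoning you give in it is unsound, since deriving $F(x_-)\geq F(x_+)$ is \emph{consistent with}, not contrary to, the strict inequality $F(x_-)>F(x_+)$. Second, the ``force'' of the strictness in Theorem~\ref{Theorem filtration 1}\eqref{Item Theorem filtration 1 Strictness} does not come from $h''>0$ at orbit radii, as you suggest; in the paper's proof it comes from $\phi>0$, $\phi'>0$ on the collar $R_0<R<R_1$ (condition~\eqref{Item phi' positive} of the cut-off function), which ensure that vanishing of the pointwise integrand \eqref{Eqn positivity explicitly} forces $\partial_s u\equiv 0$ there, contradicting that $x_\pm$ are distinct. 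The admissibility requirement that $h''>0$ whenever $h'$ equals a Reeb period is what makes $T$ \emph{strictly} decrease near orbit radii; this is a neighboring but distinct fact that your argument does not actually invoke, since non-increasingness of $T$ together with strictness of $F$ already suffices.
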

\begin{proof}
Note $T(R(x_-))=F(x_-)> F(x_+)= T(R(x_+))$, and now use that $T$ is decreasing (and use the Strictness in Theorem \ref{Theorem filtration 1}). The final claim  follows from the maximum principle.
\end{proof}

Define $CF^*_0(H)\subset CF^*(H)$ to be the subcomplex generated by $1$-orbits with $F\geq 0$ (which by Sec.\ref{Subsection Admissible Hamiltonians}.\eqref{Appendix item 7 get QH} is quasi-isomorphic to $QH^*(M)$), and let $CF^*_+(H)$ be the corresponding quotient complex.
Define {\bf positive symplectic cohomology} as the direct limit $SH^*_+(M)=\varinjlim HF^*_+(H)$ of the cohomologies of $CF^*_+(H)$.

\begin{corollary}
Positive symplectic cohomology does not depend on the choice of $\phi$.
\end{corollary}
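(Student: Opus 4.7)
My plan is to observe that although the filtration functional $F$ depends on $\phi$, the subcomplex $CF^*_0(H) \subset CF^*(H)$ that enters the definition of $SH^*_+$ does not. For a fixed admissible Hamiltonian $H$, the Floer differential on $CF^*(H)$ depends only on $(H, J, \omega)$, so the only way $\phi$ could influence $HF^*_+(H)$ is through the choice of which $1$-orbits generate $CF^*_0(H)$.

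The main step is to characterize this subcomplex intrinsically. By the Separation property of Theorem \ref{Theorem filtration 1}, a $1$-orbit $x$ of $H$ satisfies $F(x) \geq 0$ if and only if $x$ lies in the region $R \leq R_0$, and this holds for \emph{any} choice of $\phi$. For an admissible $H$, on the radial end $R \geq R_0$ one has $X_H = h'(R) Y$ with $h'(R) > 0$ and the Reeb field $Y$ nowhere zero, so no constant $1$-orbit lies there; while on $R < R_0$, where $H$ is $C^2$-small Morse, every $1$-orbit is a critical point of $H$. Thus the $1$-orbits with $F(x) \geq 0$ are exactly the constant $1$-orbits of $H$, a $\phi$-independent description. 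It follows that $CF^*_0(H)$, its quotient $CF^*_+(H)$, and the cohomology $HF^*_+(H)$ do not depend on the cut-off.

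To conclude I would verify that the direct system is itself independent of $\phi$. The continuation maps come from admissible homotopies $(H_s, J_s)$, which do not reference $\phi$. The Compatibility clause of Theorem \ref{Theorem filtration 1} ensures each such map preserves the filtration, hence sends $CF^*_0$ to $CF^*_0$; since these subcomplexes are characterized intrinsically (by the same argument applied to each Hamiltonian in the homotopy), the induced continuation maps on $CF^*_+$ are the same for all $\phi$. Passing to the direct limit yields the corollary. There is no serious obstacle: once Separation is unpacked to show that ``$F(x) \geq 0$'' means ``$x$ is a constant $1$-orbit'' uniformly in $\phi$, everything else is essentially tautological.
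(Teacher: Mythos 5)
Your argument is correct and is essentially the same as the paper's, which simply cites the fact that Corollary~\ref{Corollary R is also a filtration} (the $R$-filtration) is $\phi$-independent together with the fact that the underlying Floer data $(H,J,\omega)$ does not involve $\phi$; you unpack this by using the Separation property to identify $CF^*_0(H)$ intrinsically as the span of constant $1$-orbits (equivalently, those in $R\leq R_0$). The two routes differ only cosmetically — the paper phrases the $\phi$-independence through the auxiliary $R$-filtration of Corollary~\ref{Corollary R is also a filtration}, while you phrase it directly through Separation — and both land on the observation that the subcomplex, the quotient complex, and the continuation maps in the direct system are all specified without reference to $\phi$, even though some intermediate arguments use a particular $\phi$ to establish, say, that $CF^*_0(H)$ is indeed closed under the Floer differential.
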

\begin{proof}
This follows from the fact that Corollary \ref{Corollary R is also a filtration} does not depend on the choice of $\phi$, and the Floer theory for $(H,J,\omega)$ does not use $\phi$.
\end{proof}

\begin{corollary}\label{Corollary LES for H SH and SHplus}
There is a long exact sequence of $\K$-algebra homomorphisms
$$
\cdots \to QH^*(M) \stackrel{c^*}{\to} SH^*(M) \to SH^*_+(M) \to QH^{*+1}(M) \to \cdots
$$
%
In particular, if $SH^*(M)=0$ then $SH^*_+(M)\cong QH^{*+1}(M)$ canonically as vector spaces.

In the equivariant case, there is a long exact sequence of $\K[\![u]\!]$-module homomorphisms
$$
\cdots \to H^*(M)\otimes_{\K} \F \stackrel{c^*}{\to} ESH^*(M) \to ESH^*_+(M) \to H^{*+1}(M)\otimes_{\K} \F \to \cdots
$$
\end{corollary}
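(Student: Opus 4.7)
The plan is to extract both long exact sequences from the short exact sequence of chain complexes
\begin{equation*}
0 \to CF^*_0(H) \to CF^*(H) \to CF^*_+(H) \to 0
\end{equation*}
that is built into the definition of $CF^*_+(H)$, pass to cohomology, identify the $CF^*_0$-term with $QH^*(M)$ via property \eqref{Appendix item 7 get QH} of Section~\ref{Subsection Admissible Hamiltonians}, and then take direct limits over admissible Hamiltonians. The key observation is that Theorem~\ref{Theorem filtration 1}\eqref{Item Theorem filtration 1 Negativity}--\eqref{Item Theorem filtration 1 Compatibility} ensure both that $CF^*_0(H)$ is a subcomplex (the Floer differential cannot exit the region $\{F\geq 0\}$) and that continuation maps for admissible homotopies preserve this subcomplex, so the short exact sequence is functorial in $(H,J)$ under continuation.

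First I would verify that $CF^*_0(H)$ is indeed a subcomplex: by Separation \eqref{Item Theorem filtration 1 Separation}, $F\geq 0$ selects exactly the constant orbits lying in $\{R\leq R_0\}$; by Negativity \eqref{Item Theorem filtration 1 Negativity}, no Floer trajectory can escape to an orbit with $F<0$. Property \eqref{Appendix item 7 get QH} then identifies $H^*(CF^*_0(H))\cong QH^*(M)$. Passing to cohomology in the short exact sequence yields the long exact sequence in $HF^*$-theory. Next I would take the direct limit over admissible $(H,J)$ under continuation: Compatibility \eqref{Item Theorem filtration 1 Compatibility} guarantees that the continuation-induced maps on cohomology fit into a morphism of long exact sequences, and since direct limits of $\K$-modules are exact, the limit is again a long exact sequence. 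Because on constant orbits the filtration-zero subcomplex is unchanged under continuations between admissible Hamiltonians (they all contain the same Morse complex up to continuation quasi-isomorphism), the limit of the left term stays $QH^*(M)$, yielding
\begin{equation*}
\cdots \to QH^*(M) \xrightarrow{c^*} SH^*(M) \to SH^*_+(M) \to QH^{*+1}(M) \to \cdots,
\end{equation*}
with $c^*$ being the map from the canonical inclusion of the subcomplex of constant orbits, which is the standard characterization of $c^*:QH^*(M)\to SH^*(M)$.

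For the equivariant case, the additional point to verify is that the operations $\delta_k$ of Section~\ref{Subsection The construction of the deltak in Floer theory} also preserve the filtration $F\geq 0$ on $CF^*$, so that they descend to $CF^*_0(H)$ and $CF^*_+(H)$, making $0\to ECF^*_0(H)\to ECF^*(H)\to ECF^*_+(H)\to 0$ a short exact sequence of $S^1$-complexes (equivalently, of $\K[\![u]\!]$-modules after applying $\otimes_\K \F$). This is the step I expect to be the main obstacle. The moduli spaces defining $\delta_k$ count pairs $(w,v)$ with $w$ a gradient flow line on $\C\P^\infty$ and $v$ a parametrized Floer cylinder for a $\C\P^\infty$-family $(H_z,J_z)$ of admissible data; one must choose this family so that every $H_z$ is radial of the same slope at infinity and the associated admissible homotopies over each $w$ are themselves admissible in the sense of Section~\ref{Subsection Admissible Hamiltonians}. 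Once this is arranged, the computation \eqref{Eqn positivity explicitly} in the proof of Lemma~\ref{Lemma 1form is negative on Floer} applies pointwise in $z$, so the functional $F$ decreases along every cylinder $v$ contributing to $\delta_k$, which means $\delta_k(CF^*_0)\subset CF^*_0$ and the splitting into subcomplex and quotient is respected by all $\delta_k$.

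Given this, the equivariant Gysin setup of Section~\ref{Subsection Gysin sequence} applied to the short exact sequence of $S^1$-complexes yields a long exact sequence of $\K[\![u]\!]$-modules at the level of $EHF^*$, and taking the direct limit over admissible $(H,J)$ (again using Compatibility and the exactness of direct limits, now in the category of $\K[\![u]\!]$-modules) produces
\begin{equation*}
\cdots \to EH^*(M) \xrightarrow{c^*} ESH^*(M) \to ESH^*_+(M) \to EH^{*+1}(M) \to \cdots,
\end{equation*}
and Section~\ref{Subsection Classical S1-equivariant cohomology} identifies $EH^*(M)\cong H^*(M)\otimes_\K \F$ since the $S^1$-action is trivial on constant orbits. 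The final ``In particular'' statement about $SH^*(M)=0$ forcing $SH^{*-1}_+(M)\cong QH^*(M)$ is a direct consequence of exactness.
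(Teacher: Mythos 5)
Your proposal is correct and follows essentially the same route as the paper: extract both long exact sequences from the short exact sequence $0\to CF^*_0(H)\to CF^*(H)\to CF^*_+(H)\to 0$, identify the subcomplex term with $QH^*(M)$ via admissibility condition \eqref{Appendix item 7 get QH}, use Compatibility (Theorem~\ref{Theorem filtration 1}\eqref{Item Theorem filtration 1 Compatibility}) to pass to the direct limit, and for the equivariant version impose admissibility on the $\C\P^\infty$-family $(H_z,J_z)$ so that the filtration $F$ is respected by the Floer cylinders defining the $\delta_k$. You even pinpoint the same crux the paper highlights, namely the constraint that $\partial_s h_{w(s)}'\leq 0$ along $-\nabla f$ trajectories $w$ in $\C\P^\infty$ (i.e.\ the slopes $h_z'(R)$ are non-decreasing as $f(z)$ decreases), which you phrase equivalently as requiring the homotopies along gradient flow lines to be admissible in the sense of Section~\ref{Subsection Admissible Hamiltonians}.
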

\begin{proof}
The subcomplex yields the long exact sequence $QH^*(M)\to HF^*(H) \to HF^*_+(H) \to QH^{*+1}(M)$ which, using the Compatibility in Theorem \ref{Theorem filtration 1}, yields the claim by taking the direct limit over continuation maps, as we make the final slope of $h$ increase.

The equivariant setup follows analogously from the filtration, provided the $H_z$ in Sec.\ref{Subsection The construction of the deltak in Floer theory} are chosen to belong to the class of admissible Hamiltonians (we fix the cut-off function $\phi$). For Theorem \ref{Theorem filtration 1} \eqref{Item Theorem filtration 1 Compatibility} to apply to the Floer solutions in the equivariant construction, we need $\partial_s h_{w(s)}'\leq 0$ for $R\geq R_0$, where $w:\R\to \C\P^{\infty}$ is any $-\nabla f$ trajectory (this $f$ refers to the function \eqref{Equation f on CPinfinity}). Here $h_z=h(\cdot,z)$, for $h:[R_0,\infty)\times \C\P^{\infty}\to \R$, is $H_z$ in the region $R\geq R_0$. 
We achieve this by requiring\footnote{As we require $J$ to be of contact type not just for $R\geq R_1$ but also on the region $R_0 \leq R \leq R_1$ (due to Lemma \ref{Lemma 1form is negative on Floer}), on this region we cannot perturb $J$ in the $\mathrm{span}(Z,Y)$ directions, so the standard transversality argument \cite[Prop.3.4.1]{McDuffSalamon} may fail there (this issue does not arise for $R\geq R_1$ as Floer solutions do not reach $R\geq R_1$ due to the maximum principle). 
%
%
%
%
%
If the Floer solution $u$ enters $R<R_0$, it suffices to perturb $J$ there. So transversality is only problematic if $u$ is contained in $R_0\leq R \leq R_1$ and the image of $du$ lies in $\mathrm{span}(Z,Y)$. This implies that $u$ lands inside a cylinder in $\Sigma \times [1,\infty)$ and the ends of $u$ wrap \fix{different amounts of time} around the two boundary circles of that cylinder (as $h'$ increased), which is not allowed \fix{for homotopical reasons.}
Alternatively (without using that $h'$ is monotone) one could allow small enough perturbations of $J$ at injective points of Floer solutions in $R_0\leq R\leq R_1$, so as to ensure that the inequalities in Lemma \ref{Lemma 1form is negative on Floer} remain strictly negative at those points. 
%
%
This way the filtration construction will still hold.} that $h_z$ is independent of $z$ for $R\geq R_0$.
%
%
\end{proof}

\subsection{Proof of Theorem \ref{Theorem filtration 1}}
\label{Subsection Proof of filtration theorem}

Define the $\phi$-action by
$$
\mathcal{A}_{\phi}: \mathcal{L}M \to \R, \; \mathcal{A}_{\phi}(x) = -{\textstyle\int_{S^1}} \, x^*(\phi(R)\alpha).
$$
This vanishes on loops $x$ which lie entirely in $R\leq R_0$. Suppose now $x$ is a $1$-orbit that intersects the region $R\geq R_0$. Then $x$ is forced to lie entirely in the region $R\geq R_0$, indeed it lies in some fixed level set $R=\rho$ since $X_H=h'(R)Y$. In this case, $\mathcal{A}_{\phi}(x)=-\phi(\rho)\, h'(\rho).$

A simple calculation shows that 
\begin{equation}\label{Lemma dAphi is the first piece of 1form}
d\mathcal{A}_{\phi} \cdot \xi = - {\textstyle\int_{S^1}} \,\eta(\xi,\partial_t x)\, dt.
\end{equation}
Finally, we need to ensure the exactness of the second term in \eqref{Eqn1Form},
$$
\textstyle \int \eta(\xi,X_H)\, dt = \int  \phi'(\rho) h'(\rho)\, d\rho(\xi)\, dt,
$$
where we used the equality $X_H=h'(R)Y$ (and the fact that $\eta=0$ and $\phi'=0$ where this equality fails).
By definition, $F(x)=\mathcal{A}_{\phi}(x) + \int_x f\circ R$ so \eqref{Lemma dAphi is the first piece of 1form} and the choice of $f$ imply claim \eqref{Item Theorem filtration 1 Exactness}. Lemma \ref{Lemma 1form is negative on Floer} and claim \eqref{Item Theorem filtration 1 Exactness} imply claim \eqref{Item Theorem filtration 1 Negativity}.

In claim \eqref{Item Theorem filtration 1 Separation}, that $F$ vanishes on loops inside $R\leq R_0$ follows from $\phi(R)=f(R)=0$. On a $1$-orbit $x$ lying in $R=\rho$ the value of $F$ is \eqref{Eqn period function T}.
The rest of claim \eqref{Item Theorem filtration 1 Separation} follows from the properties of $T(\rho)$ mentioned under \eqref{Eqn period function T}.
To show claim \eqref{Item Theorem filtration 1 Compatibility}, let 
\fix{
$$f_s(R)= \int_0^R \phi'(\tau)\, h_s'(\tau)\, d\tau,\qquad F_s(x)=\mathcal{A}_{\phi}(x)+\int_x f_s\circ R.$$
}%
Then 
 \fix{
 $$d_xF_s\cdot \xi = -\int \eta(\xi,\partial_t x - X_{H_s})\, dt.$$
 }%
As in Lemma \ref{Lemma 1form is negative on Floer}, one checks $d_u F_s \cdot \partial_s u\leq 0$ on Floer continuation solutions $u$. Now
\fix{
$$
\partial_s (F_s\circ u) = d_u F_s \cdot \partial_s u + (\partial_s F_s) \circ u
$$
}%
where $(\partial_s F_s)(x) = \int_x (\partial_s f_s) \circ R$. But 
\fix{
$$\partial_s f_s (R) = \int_0^R \phi'(\tau)\, \partial_s h_s'(\tau)\, d\tau \leq 0,$$
}%
using that $h_s$ is admissible ($\partial_s h_s'\leq 0$). So $\partial_s (F_s \circ u) \leq 0$.
To prove claim \eqref{Item Theorem filtration 1 Strictness}, note that in \eqref{Eqn positivity explicitly}, if $\eta(\partial_s u,\partial_t u - X_H)=0$ for some $R\geq R_0$, then $C,z,y$ vanish as $\phi(R),\phi'(R)> 0$. Thus $\partial_s u=0$ and so $\partial_t u = X_H$ (by the maximum principle, $u$ does not enter the region $R\geq R_1$). But $x_-,x_+$ are distinct, so $\partial_s u$ cannot be everywhere zero, so strict negativity holds in Lemma \ref{Lemma 1form is negative on Floer} for some $s\in \R$. $\qed$

\section{Appendix E: Morse-Bott spectral sequence}
\label{Subsection Morse-Bott spectral sequence}

The Morse-Bott spectral sequences that we use in the paper are analogous to those due to Seidel \cite[Eqns.(3.2),(8.9)]{Seidel} that arose from $S^1$-actions on Liouville manifolds. 
Morse-Bott techniques in Floer theory go back to Po\'{z}niak \cite{Pozniak} and Bourgeois \cite{BourgeoisPhD}. For Liouville manifolds (i.e.\;exact convex symplectic manifolds), Bourgeois-Oancea \cite{Bourgeois-Oancea,Bourgeois-Oancea2} showed that the Morse-Bott Floer complex for time-independent Hamiltonians $H$ (assuming transversal non-degeneracy of $1$-orbits) computes the same Floer cohomology as when using a time-dependent perturbation of $H$. The Morse-Bott Floer complex introduces auxiliary Morse functions on the copies of $S^1$ arising as the initial points of $1$-orbits, and uses the critical points of the auxiliary Morse functions as generators, with an appropriate degree shift. The differential now counts {\bf cascades} i.e.\,alternatingly following the flows of the negative gradients of the auxiliary functions or following Floer solutions that join two $1$-orbits. This is the natural complex that would arise from a limit, as one undoes small time-dependent perturbations of $H$ localised near those copies of $S^1$ in $M$.
The Morse-Bott complex admits a natural filtration by the action functional. As the functional decreases along Floer solutions, the filtration is exhausting and bounded below, so it induces a spectral sequence converging to $HF^*(H)$ whose $E_1^{pq}$-page consists of the cohomologies of the $S^1$ copies shifted appropriately in degree. It was shown by Cieliebak-Floer-Hofer-Wysocki \cite[Prop.2.2]{CFHW} that a suitable time-dependent perturbation of $H$ localised near such an $S^1$-copy creates a local Floer complex in two generators whose cohomology agrees with the (Morse-Bott) cohomology of $S^1$.

Kwon and van Koert \cite[Appendix B]{KwonvanKoert} carried out a detailed construction of Morse-Bott spectral sequences for symplectic homology of Liouville domains with periodic Reeb flows. So we restrict ourselves to explaining how these ideas generalise for convex symplectic manifolds $M$ (Sec.\ref{Subsection convex symplectic manifolds}), using admissible Hamiltonians $H$ and our new filtration $F$ from Appendix D (our filtration replaces the role of the action functional, which is multi-valued in our setup).

{\bf Assumption.} \emph{The subsets of Reeb orbits in $\Sigma$ are Morse-Bott submanifolds} (see Def.\ref{Definition Morse-Bott submfd}).

Recall the non-constant $1$-orbits $x$ of $H$ arising at slope $h'=\tau$ correspond to Reeb orbits $y(t)=x(t/\tau)$ in $\Sigma$ of period $\tau$. Consider the {\bf slices} $$\mathcal{S}(c)=\{m: R(m)=c\}\subset M,$$ i.e.\;the subset of points where the radial coordinate $R$ of Sec.\ref{Subsection convex symplectic manifolds} equals a given value $c$. Let $R_{-1}<R_{-2}<\cdots$ be the values of $R$ for which $1$-orbits of $H$ appear in $\mathcal{S}(R)$, equivalently the slopes $\tau_p=h'(R_p)$ for $p<0$ are the Reeb periods less than the final slope of $h'$.

Let $\mathcal{O}_p=\mathcal{O}_{p,H}$ be the {\bf moduli space of parametrized $1$-orbits} of $H$ in $\mathcal{S}(R_p)$. These have $F$-filtration value $F_p=-\phi(R_p)h'(R_p)+f(R_p)$ by \eqref{Eqn period function T}. By construction, 
$$
0>F_{-1}>F_{-2}>F_{-3}>\cdots
$$
By considering the initial point of the orbits, we view $\mathcal{O}_p\subset \mathcal{S}(R)$ as a subset, which can be identified with the Morse-Bott submanifold $B_p\subset \Sigma$ of initial points of the Reeb orbits of period $\tau_p$. Denote by $\mathcal{O}_0$ the Morse-Bott manifold of constant orbits of $H$, i.e.\,the critical locus of $H$ (which by admissibility are the $1$-orbits of $H$ in $R\leq R_0$, and determine a Morse-Bott complex for $M$).
We define $F_0=0$, which is the filtration value for $\mathcal{O}_0$, and by convention we define $F_p=p$ for positive integers $p\geq 1$ (there are no $1$-orbits with filtration value $F>0$).

Abbreviate by $\mathbf{k=p+q}$ the {\bf total degree}. 
Let $C^*$ denote the Floer complex $CF^*_+(H)$ or $CF^*(H)$. The filtration is defined by letting $F^p(C^k)$ be the subcomplex generated by $1$-orbits with filtration function value $F\geq F_p$ (in particular, $F^p(C^k)=0$ for $p> 0$ since $F\leq 0$ on all $1$-orbits). Recall the spectral sequence for this filtration has $E_0^{pq}=F^p(C^k)/F^{p+1}(C^k)$. As the filtration is exhaustive and bounded below, it yields convergent spectral sequences
$$
\begin{array}{lll}
E_1^{pq} \Rightarrow HF^*_+(H) &\textrm{where} & E_1^{pq}=HF^k_{\mathrm{loc}}(\mathcal{O}_p,H) \textrm{ for }p<0, \textrm{and 0 otherwise}
\\[1mm]
E_1^{pq} \Rightarrow HF^*(H) & \textrm{as above, except} & E_1^{0q}=H^q(M)
\end{array}
$$
where it is understood, that $E_1^{pq}=0$ for $p\ll 0$, as there are only finitely many $\mathcal{O}_p$ for $H$, and we remark that the same spectral sequences exist in the equivariant setup after replacing $HF$ by $EHF$. Above, $HF^*_{\mathrm{loc}}(\mathcal{O}_p,H)$ refers to the cohomology of the local Morse-Bott Floer complex generated by $\mathcal{O}_p$. By construction, its differential only counts cascades which do not change the filtration value, so the Floer solutions stay trapped in the slice $\mathcal{S}(R_p)$. If one were to make a very small time-dependent perturbation of $H$ supported near $\mathcal{S}(R_p)$, the argument in \cite[Prop.2.2]{CFHW} and \cite[Sec.3.3]{OanceaEnsaios} would show that this is quasi-isomorphic to the local Floer complex for that slice, where one only considers Floer solutions whose filtration value stays bounded within a small neighbourhood of the value $F=F_p$.

Let $B_{p,c}$ label the connected components of $B_p$ (and the labelling by $c$ depends on $p$). These have a Conley-Zehnder index $\mathrm{CZ}(B_{p,c})$ and a grading $\mu(B_{p,c})=n-\mathrm{CZ}(B_{p,c})$ (Appendix C).

\begin{lemma}\label{Lemma local HF is H}
Assume that the linearised Reeb flow is complex linear with respect to a unitary trivialisation of the contact structure along every periodic Reeb orbit in $\Sigma$. Then
\begin{equation}\label{Equation Local Floer cohomology}
HF^*_{\mathrm{loc}}(\mathcal{O}_p,H)\cong \bigoplus_{c} H^{*-\mu(B_{p,c})}(B_{p,c}).
\end{equation}
\end{lemma}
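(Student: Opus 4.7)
The plan is to apply the Morse-Bott Floer complex with cascades, due to Bourgeois-Oancea \cite{Bourgeois-Oancea,Bourgeois-Oancea2}, adapted to the present filtered convex symplectic setting along the lines of Kwon-van Koert \cite[Appendix B]{KwonvanKoert}. First I would choose an auxiliary Morse-Smale pair $(f_{p,c},g_{p,c})$ on each connected component $B_{p,c}$. The local Morse-Bott Floer chain complex is then generated by the critical points of the $f_{p,c}$'s, with Floer cohomological grading
\[
|q| \,=\, \mathrm{ind}_{\mathrm{Morse}}(q) \,+\, \mu(B_{p,c}),
\]
using the Morse-Bott shift \eqref{Equation Morse-Bott shift} discussed in Remark \ref{Remark from CZReeb to CZHam}. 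The local differential counts cascades (alternating Floer cylinders asymptotic to orbits in $\mathcal{O}_p$, interspersed with gradient trajectories of the $f_{p,c}$'s), constrained so that the filtration value stays trapped in an arbitrarily small neighbourhood of $F_p$.

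Next, I would invoke the complex linearity hypothesis to secure the three ingredients needed for the Bourgeois-Oancea identification to go through: (i)~the clean normal non-degeneracy condition of Definition \ref{Definition Morse-Bott submfd}, so that each $B_{p,c}$ is a genuine Morse-Bott submanifold with linearised return map whose $1$-eigenspace equals $TB_{p,c} \cap \xi|_{B_{p,c}}$; (ii)~a constant Conley-Zehnder index along each component, computable via properties \ref{CZ of a direct sum}-\ref{CZ of a path in unitary line} and yielding the uniform grading shift \eqref{Equation Morse-Bott shift}; and (iii)~regularity of the linearised Cauchy-Riemann operator in the directions normal to $B_{p,c}$, which enables the standard cascade transversality argument. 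Under these hypotheses, the theorem of Bourgeois-Oancea identifies the local Morse-Bott Floer cohomology with the Morse cohomology of each component, so that by the standard Morse-to-singular comparison,
\[
HF^*_{\mathrm{loc}}(\mathcal{O}_p,H) \,\cong\, \bigoplus_c HM^*(B_{p,c},f_{p,c}) \,\cong\, \bigoplus_c H^{*+\mu(B_{p,c})}(B_{p,c}).
\]

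The hardest part will be verifying that the cascade framework, originally developed for Liouville domains with a single-valued action functional, carries over to our non-exact convex symplectic setting in which the action is replaced by the filtration functional $F$ of Theorem \ref{Theorem filtration 1}. The strict filtration property (Theorem \ref{Theorem filtration 1}\eqref{Item Theorem filtration 1 Strictness}) plays the role of strict monotonicity of the action in the usual compactness and breaking arguments, and the local cascade moduli spaces depend only on the geometry of $M$ near the slice $\mathcal{S}(R_p)$, which is modelled on the contact hypersurface $\Sigma$; the bubbling and energy estimates of \cite{Bourgeois-Oancea2} then apply verbatim. Once this adaptation is in place, the direct sum decomposition over the connected components $B_{p,c}$ is immediate from the local nature of the cascade count.
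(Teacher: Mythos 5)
Your general strategy (reduce to a Morse--Bott cascade complex à la Bourgeois--Oancea and Kwon--van Koert, then use the filtration $F$ in place of the action) is indeed the framework the paper appeals to. However, you have misidentified what the hypothesis of the lemma is actually for, and in doing so you miss the one genuine subtlety that the complex-linearity assumption is there to handle.

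The three ingredients you attribute to the complex-linearity hypothesis are not what it provides. Item (i), that the $B_{p,c}$ are Morse--Bott submanifolds in the sense of Definition~\ref{Definition Morse-Bott submfd}, is already imposed as a \emph{separate} standing assumption at the start of Appendix~E (``\emph{Assumption.} The subsets of Reeb orbits in $\Sigma$ are Morse--Bott submanifolds''); it is not a consequence of complex linearity. Item (ii), constancy of the Conley--Zehnder index on a connected component, follows from the Morse--Bott condition and continuity of the index, independently of complex linearity. Item (iii), regularity of the linearised operator, is achieved by generic choices of perturbation data, not by a hypothesis on the Reeb flow. What your argument omits entirely is the orientation issue: for Morse--Bott families of orbits there is in general an obstruction class in $H^1(\Sigma,\Z/2)$, coming from the coherent orientations of the local Floer moduli spaces, and the correct unconditional statement is \eqref{Equation Local Floer cohomology} with $H^*(B_{p,c})$ replaced by cohomology with coefficients in the local system determined by that class. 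This is precisely what Kwon--van Koert address in \cite[Prop.~B.4, Sec.~B.0.2]{KwonvanKoert}, and their \cite[Lemma~B.7]{KwonvanKoert} shows that the complex-linearity assumption on the linearised Reeb flow kills the obstruction, so one may use untwisted coefficients. That is the content of the hypothesis, and a proof that does not engage with the orientation local system has a genuine gap: without it, you cannot conclude \eqref{Equation Local Floer cohomology} with ordinary coefficients, only with twisted ones.
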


Kwon and van Koert give a detailed discussion of this in \cite[Prop.B.4.]{KwonvanKoert} and explain in \cite[Sec.B.0.2]{KwonvanKoert} that there is an obstruction in $H^1(\Sigma,\Z/2)$ to \eqref{Equation Local Floer cohomology} caused by orientation signs. Indeed \eqref{Equation Local Floer cohomology} always holds if one uses the local system of coefficients on $B_{p,c}$ determined by that $H^1$-class. This obstruction vanishes under \fix{the assumptions of Lemma \ref{Lemma local HF is H} (see \cite[Lemma B.7]{KwonvanKoert}).}
%
%

By letting the slope of $H$ increase at infinity, 
and taking the direct limit over continuation maps, one obtains the following spectral sequences.

\begin{corollary}
Under the assumption of Lemma \ref{Lemma local HF is H}, there are convergent spectral sequences
$$
\begin{array}{lll}
E_1^{pq} \Rightarrow SH^*_+(H) &\textrm{where} &E_1^{pq}=\bigoplus_{c} H^{k-\mu(B_{p,c})}(B_{p,c}) \textrm{ for }p<0, \textrm{and 0 otherwise}
\\[1mm]
E_1^{pq} \Rightarrow SH^*(H) &\textrm{as above, except} &E_1^{0q}=H^q(M)
\\[1mm]
E_1^{pq} \Rightarrow ESH^*_+(H) &\textrm{where} &E_1^{pq}=\bigoplus_{c}EH^{k-\mu(B_{p,c})}(B_{p,c}) \textrm{ for }p<0, \textrm{and 0 otherwise}
\\[1mm]
E_1^{pq} \Rightarrow ESH^*(H) &\textrm{as above, except} &E_1^{0q}=EH^q(M)\cong H^*(M)\otimes_{\K} \F.
\end{array}
$$
(where ordinary cohomology is always computed using $\K$ coefficients.)
\end{corollary}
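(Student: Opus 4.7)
\medskip
\noindent\textbf{Proof plan.} My plan is to derive each of the four spectral sequences from the $F$-filtration of Appendix D applied to the appropriate Floer complex for a single admissible Hamiltonian $H$, identify the $E_1$-page via the local Morse-Bott computation, and then pass to the direct limit over continuation maps of increasing final slope.

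First I will show that on both $CF^*(H)$ and $CF^*_+(H)$ the filtration $F^p(C^k)$ spanned by $1$-orbits with $F \geq F_p$ is bounded above by $p=0$ and below by the finiteness of the set of Reeb periods crossed by $h'$, so the standard convergence theorem for strongly filtered complexes applies. By Strictness (Theorem \ref{Theorem filtration 1}\eqref{Item Theorem filtration 1 Strictness}), the induced $E_0$-differential counts only cascades remaining in the slice $\mathcal{S}(R_p)$, so $E_0^{pq}$ is the local Morse-Bott Floer complex at $\mathcal{O}_p$, and its cohomology $E_1^{pq}$ equals $\bigoplus_c H^{k+\mu(B_{p,c})}(B_{p,c})$ for $p<0$ by Lemma \ref{Lemma local HF is H} (which is available thanks to the hypothesis that the linearised Reeb flow is complex linear in a unitary trivialisation). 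For $p=0$ in the non-positive version, admissibility condition \eqref{Appendix item 7 get QH} identifies this term with the Morse complex of $H|_{R\leq R_0}$, whose cohomology is $H^q(M)$.

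Next, for the equivariant versions I will extend $F$ to $EC^* = C^* \otimes_{\K} \F$ by declaring $F(x u^{-j}) = F(x)$. Seidel's operators $\delta_k$ (Section \ref{Subsection The construction of the deltak in Floer theory}) count pairs $(w,v)$ where $v$ solves a $\C\P^\infty$-parametric Floer equation for a family of Hamiltonians $\{H_z\}$. I will choose this family to be admissible with a common radial profile at infinity (a common $h$ and common cut-off $\phi$); because the admissible data live on the end of $M$ where the $S^1$-action by loop rotation is separate from the radial direction, this choice is compatible with the $S^1$-equivariance, shift-invariance for $\sigma$, and generic transversality required in Appendix B. Lemma \ref{Lemma 1form is negative on Floer} then applies verbatim to each parametric cylinder $v$, because the integrand \eqref{Eqn positivity explicitly} is a pointwise identity depending only on $J$ and the contact data at infinity; hence $dF \cdot \partial_s v \leq 0$ and every $\delta_k$ preserves the filtration. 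The induced filtration on $EC^*$ yields a convergent spectral sequence whose $E_0$-page is the local equivariant Morse-Bott complex at each $\mathcal{O}_p$; under the associated $S^1$-action of Definition \ref{Definition Morse-Bott submfd}, its cohomology is $\bigoplus_c EH^{k+\mu(B_{p,c})}(B_{p,c})$ by the equivariant refinement of Lemma \ref{Lemma local HF is H}, and at $p=0$ the trivial $S^1$-action on constant orbits gives $H^q(M) \otimes_{\K} \F$ as recalled in Appendix B.

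Finally, continuation maps for admissible homotopies of increasing slope respect $F$ by Theorem \ref{Theorem filtration 1}\eqref{Item Theorem filtration 1 Compatibility} and so induce morphisms of the filtered complexes and of their spectral sequences. Since filtered colimits are exact on modules, the $E_r$-page of the colimit spectral sequence is the colimit of the $E_r$-pages; for any fixed $p<0$, the local data at $\mathcal{O}_p$ stabilises once the slope exceeds $\tau_p$, while the vanishing $F^p = 0$ for $p>0$ persists in the limit, preserving strong convergence to $SH^*_{(+)}(M)$ and $ESH^*_{(+)}(M)$. The main obstacle is the filtration-preservation of the higher $\delta_k$ in the equivariant case: once the parametric Hamiltonians are arranged admissibly with a common radial tail, the per-cylinder estimate of Lemma \ref{Lemma 1form is negative on Floer} transfers without change and the rest of the argument is formal.
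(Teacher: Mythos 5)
Your proposal follows essentially the same route as the paper: the $F$-filtration of Appendix D gives a bounded, exhaustive filtration on $CF^*_{(+)}(H)$ whose $E_1$-page is identified via the local Morse-Bott computation (Lemma \ref{Lemma local HF is H}), with $p=0$ handled by admissibility condition \eqref{Appendix item 7 get QH}, and then one passes to the direct limit over continuation maps of increasing slope; the equivariant case follows the same pattern after arranging the parametric family $\{H_z\}$ to be admissible so that the $\delta_k$ preserve the filtration, exactly as the paper argues in the proof of Corollary \ref{Corollary LES for H SH and SHplus}. Your write-up is a bit more explicit about why the higher $\delta_k$ respect $F$ and why colimits commute with the $E_r$-pages, but these are just elaborations of what the paper leaves implicit, not a different argument.
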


\end{document}